\documentclass[11pt,english]{article}
\usepackage[cp1251]{inputenc}
\usepackage{amssymb,amsmath,babel,amsthm,graphicx,cite}

\usepackage{tikz-cd,multicol}

\newcounter{parag}

\newtheorem*{theorem*}{Theorem}
\newtheorem{lemma}{Lemma}%[parag]
\newtheorem{prop}{Statement}

\theoremstyle{definition}

\newcounter{cl}

\title{Recognizing $A_7$ by its set of element orders} 

\author{E. Jabara -- A. Mamontov } \date{}

\begin{document}

\maketitle

\noindent{\bf Abstract. } {\it Let $G$ be a periodic group, the spectrum $\omega(G) \subseteq \mathbb{N}$ of $G$ is the set of orders of elements in $G$. In this paper we prove that the alternating group $A_{7}$ is uniquely defined by its spectrum in the class of all groups.} \medskip
	
\noindent{\bf Keywords:} periodic group, locally finite group, spectrum.
	
\begin{section}{Introduction}
	
Let $\mathfrak{M}$ be a class of periodic groups and $G \in \mathfrak{M}$.  The {\it spectrum} of $G$ is the set $$\omega (G)=\{ n \in \mathbb{N} \mid n \mbox{ is the order of some element in } G \} $$ and $\mu(G)$ is the set of maximal elements of $\omega(G)$ with respect to division. In particular $\omega(A_7)=\{1,2,3,4,5,6,7\}$ and $\mu(A_7)=\{4,5,6,7\}$. A group $G$ is called {\it 	recognizable by spectrum in} $\mathfrak{M}$
if for any ${H \in \mathfrak{M}}$ the equality $\omega(H)=\omega(G)$ implies isomorphism $H \simeq G$.

Many finite simple groups are recognizable by spectrum in the class of finite groups (see \cite{fin_review} for a survey of known results). Moreover, $L_2(2^m)$ \cite{zhma1999e}, $L_2(7) \simeq L_3(2)$ \cite{lyku1}, Mathieu group $M_{10}$ \cite{jlm2014e}, and $L_3(4)\simeq M_{21}$ \cite{l34e} are known to be recognizable by spectrum in the class of periodic groups. There are examples of finite simple groups that are recognizable by spectrum in the class of finite groups but not recognizable in the class of periodic groups \cite{pernotfin}, which are related to non-locally finite groups of large even exponent that provide negative solution to the Burnside problem. In the paper we prove

\vspace{3mm}
	
{\bf Theorem.} {\it $A_7$ is recognizable by spectrum in the class of periodic groups.}

\vspace{3mm}

Note that \cite{OCn} contains the classification of finite $OC_n$ groups, i.e. groups with spectrum $\{1,2,\ldots, n\}$ for some $n \in \mathbb{N}$. In particular, it is shown that $A_7$ is recognizable by spectrum in the class of finite groups. Therefore our major scope is to prove that a group $G$ with $\omega(G)=\omega(A_7)$ is locally finite.

For $n \leq 6$ it is known that $OC_n$ groups are locally finite \cite{neu1937,sane,maz1-5,lmmj2014e}. Theorem gives a positive solution to \cite[Conjecture 7.2.1]{rev_per}. In \cite{OCn} it is proved that if $G$ is a finite group in $OC_{8}$, then $G\simeq L_{3}(4) \rtimes \langle \beta \rangle$, where $\beta$ is a unitary automorphism of $L_{3}(4)$, and that if $n\geq 8$ then there are no finite groups in $OC_n$. In this context it is interesting to highlight the following open problems.

\vspace{3mm}

{\bf Problem 1.} Is a group with spectrum $\{1,2,\ldots, 8\}$ locally finite?

\vspace{3mm}

{\bf Problem 2.} Is there a group with spectrum $\{1,2,\ldots, n\}$, where $n>8$?

\end{section}
	
\begin{section}{Notations, preliminary results and strategy}

Let $A_n$ and $S_n$ denote the alternating and symmetric groups of degree $n$ correspondingly. Let also $L_n(q)$ be the projective special linear group of dimension $n$ over the field with $q$ elements. The prime power $p^k$ denotes the elementary abelian group of order $p^k$, the number $n$ denotes the cyclic group of order $n$, $p^{1+2}$ denotes the extraspecial group of order $p^3$ with no elements of order $p^2$. The following groups are defined by generators and relations: 
$$F_{k^2 \cdot 6} = \big \langle x,t \; \big | \; x^3,t^2,(xt)^6,[x,t]^k \big \rangle \simeq k^2:6;$$
$$F_{294} = \big \langle x,t \; \big | \; x^3,t^2,(xt)^6,[x,t]^7 \big \rangle \simeq 7^2:6.$$
We have $F_{42} = \langle x,z \mid \rho_{42} \rangle$, where $$\rho_{42} =\{x^3,z^2,(xz)^6,b^7,b^x=b^4 \} \text{ and }b=z^xz.$$
$$F_{36}=\big \langle t,x \mid t^4,x^3,(t^2x)^2,[x,x^t] \big \rangle;$$
$$3^{1+2}:2=\langle x,t \mid x^3,t^2,(xt)^6,[x,t]^3 \rangle.$$

We remark that $F_{42} < F_{294}$ with $z=t^{xt}$.

We denote by $\Gamma_n=\Gamma_n(G)$ the set of elements of order $n$ in $G$ and \[ \Delta=\Delta(G)=\{x^2\mid x \in \Gamma_4 \}.\] If $p$ is a prime then $O_{p}(G)$ is the largest normal $p$-subgroup of $G$. If $A$ and $B$ are groups, $A:B$ denotes some extension of $A$ by $B$. Some local notations are introduced at the beginning of paragraphs.

Speaking of computations we refer to computations in {\sc Gap} \cite{gap} using coset enumeration algorithm. 

Further we assume that a group $G$ with  $\omega(G)=\omega(A_7)=\{1,2,3,4,5,6,7\}$ is a counterexample to Theorem such that the exponent of $O_2(G)$ is the smallest possible.

\begin{lemma}\label{l:nonlf}
$G$ is not locally finite.
\end{lemma}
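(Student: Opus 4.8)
The plan is to argue by contradiction, using local finiteness to collapse the whole problem onto the finite classification of \cite{OCn}: assuming $G$ is locally finite, I will show $G \simeq A_7$, which contradicts the standing hypothesis that $G$ is a counterexample to the Theorem. The only real ingredient is that in a locally finite group every finitely generated subgroup is finite, so every finite subgroup of $G$ lies in a finite subgroup whose spectrum is forced to be $\omega(A_7)$.

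First I would extract from $G$ a single finite copy of $A_7$. Since $\{4,5,6,7\}=\mu(A_7)\subseteq\omega(G)$, choose elements $a_4,a_5,a_6,a_7\in G$ of orders $4,5,6,7$ and set $H_0=\langle a_4,a_5,a_6,a_7\rangle$. As $G$ is locally finite and $H_0$ is finitely generated, $H_0$ is finite. Its spectrum contains $4,5,6,7$ together with all their divisors, so $\{1,\dots,7\}\subseteq\omega(H_0)\subseteq\omega(G)=\{1,\dots,7\}$, whence $\omega(H_0)=\omega(A_7)$. Since $A_7$ is recognizable by spectrum in the class of finite groups \cite{OCn}, we conclude $H_0\simeq A_7$.

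Next I would show that $H_0$ is already all of $G$. Let $K$ be an arbitrary finite subgroup of $G$ and put $L=\langle K,H_0\rangle$. Again $L$ is finitely generated, hence finite, and $\omega(A_7)=\omega(H_0)\subseteq\omega(L)\subseteq\omega(G)=\omega(A_7)$, so $L\simeq A_7$ by \cite{OCn}. Because $H_0\le L$ and $|H_0|=|L|=|A_7|$, we get $L=H_0$, and therefore $K\le H_0$. A locally finite group is the union of its finite subgroups, so $G=H_0\simeq A_7$, contradicting the choice of $G$ as a counterexample; hence $G$ is not locally finite.

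I do not expect any serious obstacle internal to this lemma: it is a short reduction once \cite{OCn} is in hand, the only point requiring attention being that the subgroups $H_0$ and $\langle K,H_0\rangle$ are finitely generated and hence finite. The genuine difficulty of the paper is postponed to the remaining sections, which must rule out the (now necessarily non-locally-finite) counterexample $G$; this is where the minimality assumption on the exponent of $O_2(G)$ and the auxiliary finite groups $F_{k^2\cdot 6}$, $F_{294}$, $F_{42}$, $F_{36}$, $3^{1+2}:2$ introduced above come into play.
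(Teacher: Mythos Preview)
Your proof is correct and follows essentially the same route as the paper's: pick generators of orders $4,5,6,7$, apply \cite{OCn} to the finite subgroup they generate, and then absorb the rest of $G$ by the same argument. The only cosmetic difference is that the paper adjoins one element $x\in G\setminus H$ at a time rather than an arbitrary finite subgroup $K$, but the logic is identical.
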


\begin{proof} Assume that $G$ is locally finite. Choose elements $x_4,x_5,x_6$, and $x_7$ in $G$ so that the order of $x_i$ is $i$. Then $H = \langle x_4,x_5,x_6,x_7 \rangle$ is finite and $\omega(H)=\omega(G)$. By \cite{OCn} $H \simeq A_7$. If $G \not = H$ then take $x \in G \setminus H$. Again $\langle H,x \rangle$ is finite, and hence isomorphic to $A_7$. Therefore $H=\langle H,x \rangle = G$, a contradiction.
\end{proof}

A corollary of Lemma \ref{l:nonlf} and Shunkov's Theorem \cite{shu1972e} is

\begin{lemma}\label{l:shunk}
The centralizer of every involution in $G$ is infinite.
\end{lemma}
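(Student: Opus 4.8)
The plan is to argue by contradiction, exploiting the fact that $G$ certainly \emph{does} contain involutions, since $2 \in \omega(G) = \{1,2,3,4,5,6,7\}$. Suppose, then, that some involution $t \in G$ has finite centralizer $C_G(t)$. The decisive ingredient is Shunkov's theorem \cite{shu1972e}: a periodic group which contains an involution with finite centralizer is locally finite. Since $G$ is periodic — indeed of bounded exponent dividing $\lcm(1,2,\dots,7)$ — the hypotheses of that theorem are satisfied for the involution $t$, and we conclude that $G$ is locally finite. This directly contradicts Lemma \ref{l:nonlf}. Hence no involution of $G$ can have finite centralizer, which is exactly the assertion of the lemma.

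The only point that needs to be checked before invoking Shunkov's theorem is that its hypotheses genuinely hold: $G$ is periodic by our standing assumption on the counterexample, and it contains at least one involution because $2 \in \omega(G)$. Once these two trivial verifications are made, the deduction is purely formal, so there is no real obstacle in this proof — the statement is a direct corollary of Lemma \ref{l:nonlf} combined with the cited theorem of Shunkov. The substance of the paper lies elsewhere: this lemma is recorded because subsequent arguments will repeatedly analyze the structure of centralizers of involutions in $G$, and knowing a priori that each such centralizer is infinite (and hence, being periodic with bounded exponent, a rich infinite locally finite or more general periodic group to work inside) is what makes those later arguments get off the ground.
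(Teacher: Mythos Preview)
Your argument is correct and matches the paper's own reasoning: the paper simply records the lemma as an immediate corollary of Lemma~\ref{l:nonlf} together with Shunkov's theorem \cite{shu1972e}, exactly as you do. No further commentary is needed.
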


Our general strategy is to show first that $G$ contains some nonabelian finite simple subgroup $K$ with an involution $a \in K \cap \Delta$, and then approach this case via $C_G(a)$ as we describe at the beginning of paragraph 6.

The first step was started at \cite{m2020e}, where the following statement was proven:

\begin{prop}\label{s:arxiv}
Let $H$ be a subgroups of $G$ isomorphic to $A_4$, whose involutions are in 
$\Delta(G)$. Then either $O_2(H) \subseteq O_2(G)$, or $G$ has a finite simple subgroup $A_5$ or $L_2(7)$, whose involutions are in $\Delta(G)$. 
\end{prop}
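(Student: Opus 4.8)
We have $H \simeq A_4$, with $V = O_2(H) \cong 2^2$ its Klein four normal subgroup, and all three involutions of $V$ lie in $\Delta(G)$, i.e. each is a square of an order-$4$ element. Let $b$ be an element of order $3$ in $H$, so $H = V \rtimes \langle b\rangle$ and $b$ cyclically permutes the three involutions $a_1, a_2, a_3$ of $V$.

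The plan is as follows. For each $i$ pick $t_i \in \Gamma_4(G)$ with $t_i^2 = a_i$. Consider the subgroup $K = \langle V, t_1, t_2, t_3 \rangle$, or more economically $K = \langle t_1, t_2, b \rangle$ (note $t_3 = t_1^b$ or $t_2^b$ can be arranged after replacing the $t_i$ by suitable conjugates under $b$, since $b$ permutes the $a_i$ and the condition "$a_i \in \Delta$" is $b$-invariant). The idea is that either the order-$4$ elements $t_i$ normalize $V$ — in which case $L = \langle V, t_1, t_2, t_3 \rangle$ is a finite $2$-group normalizing $V$, forcing $V \le O_2(L) \le O_2(G)$ after one checks $L$ is actually normal-in-$G$-enough, giving the first alternative — or some $t_i$ fails to normalize $V$, and then the finite group $K$ generated by these few elements, subject only to the relations forced by $\omega(G) = \{1,\dots,7\}$ (orders of $t_i$, of $b$, of the visible products like $t_i t_j$, $t_i b$, etc.), must be identified. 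This is exactly where the coset-enumeration computations in {\sc Gap} enter: one enumerates the finite groups generated by such a configuration, under all spectrum-compatible choices of the orders of the finitely many relevant words, and shows every one of them either collapses $V$ into $O_2$ or contains a section isomorphic to $A_5$ or $L_2(7)$ whose involutions can be traced back into $\Delta(G)$. The presentations $F_{36} = \langle t,x \mid t^4, x^3, (t^2x)^2, [x,x^t]\rangle$ and $3^{1+2}{:}2 = \langle x,t\mid x^3,t^2,(xt)^6,[x,t]^3\rangle$ singled out in the notation section are evidently the borderline finite quotients that arise in this enumeration, so the argument will consist of pinning down which relations must hold, running the enumeration, and inspecting the resulting list.

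In more detail, the key steps in order: (1) normalize the data so that $b$ acts nicely on $\{t_1, t_2, t_3\}$ and record the constraints $|t_i| = 4$, $|b| = 3$, $t_i^2 \in V$, together with $|t_i t_j|, |t_i b| \in \omega(G)$; (2) case-split on whether $\langle t_1, t_2, t_3\rangle$ normalizes $V$: if yes, show the resulting locally-finite-looking $2$-group is genuinely finite (it is generated by order-$4$ elements with all pairwise products of bounded order, and normalizes the finite group $V$, so $\langle V, t_1, t_2, t_3\rangle$ is finite by a Dicman/Baer-type argument or by direct enumeration), hence lies in $O_2(G)$ by maximality/minimality of the counterexample — yielding $O_2(H) \subseteq O_2(G)$; (3) if no, run the coset enumeration over the finitely many spectrum-admissible presentations, obtain a short list of finite groups, and in each locate an $A_5$ or $L_2(7)$ section and verify its involutions are squares of order-$4$ elements of $G$ (using that the $t_i$ themselves provide the needed order-$4$ preimages, and that conjugacy of involutions inside the finite simple section transports the $\Delta$-membership). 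The main obstacle I expect is step (2)–(3): controlling the group generated by the $t_i$ when they do \emph{not} normalize $V$, since a priori that group need not be finite — one must extract enough relations from $\omega(G) = \omega(A_7)$ (in particular the absence of elements of orders $8, 9, 10, \dots$) to force finiteness of the relevant amalgam before the enumeration can even be run, and simultaneously rule out pathological infinite quotients. Establishing that finiteness — rather than the subsequent mechanical inspection of the finite list — is the heart of the matter.
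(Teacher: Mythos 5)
First, a point of comparison: the paper does not prove Statement \ref{s:arxiv} at all --- it is imported from \cite{m2020e} (where it is assembled from a sequence of lemmas, notably Lemma 4.2 of that paper), so there is no internal proof to measure your sketch against; what follows assesses the sketch on its own terms.

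The pivot of your argument, step (2), contains a genuine error. From ``$L=\langle V,t_1,t_2,t_3\rangle$ is a finite $2$-group normalizing $V$'' you conclude $V\le O_2(L)\le O_2(G)$, but $O_2(L)\subseteq O_2(G)$ is false for a general subgroup $L\le G$ (already $O_2(S_4)\not\subseteq O_2(S_5)$), and neither the vague ``normal-in-$G$-enough'' proviso nor the minimality of the counterexample (which concerns the choice of $G$, not of subgroups of $G$) repairs this. The correct route to the first alternative is the Baer--Suzuki analogue for groups of $2$-exponent $4$ \cite{bs2014e}, used repeatedly elsewhere in the paper: an involution $a$ lies in $O_2(G)$ if and only if $\langle a,a^g\rangle$ is a $2$-group for \emph{every} $g\in G$. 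So the dichotomy one must actually run is not ``do the chosen $t_i$ normalize $V$'' but ``does some conjugate $a^g$ of an involution $a\in O_2(H)$ fail to generate a $2$-group with $a$''; in that failure case one gets a dihedral subgroup $\langle a,a^g\rangle$ with $aa^g$ of order $3$, $5$, $6$ or $7$, and it is that configuration, combined with the order-$3$ element of $H$ and the square roots witnessing $a\in\Delta$, which has to be fed into the enumerations to manufacture $A_5$ or $L_2(7)$. Your case split does not exhaust the possibilities (all three $t_i$ may normalize $V$ while some other conjugate of $a$ does not generate a $2$-group with it), so even granting every computation the argument would not close.

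Second, the issue you yourself flag as ``the heart of the matter'' --- forcing finiteness of the groups generated by these configurations before any coset enumeration can be trusted --- is left entirely open. It cannot be dispatched by recording only the orders of the generators and a few visible products: presentations whose relators are just such order conditions typically admit infinite, non-locally-finite quotients compatible with $\omega(A_7)$. The proof in \cite{m2020e} spends most of its length extracting, lemma by lemma, \emph{additional} relations genuinely forced by the spectrum (orders of commutators and of specific longer words), and only then do the enumerations terminate and become decisive. As written, your proposal identifies plausible ingredients but establishes neither alternative of the statement.
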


{\bf Remark:} Let $H=V:A_5$, where $V=2^4$. Then $\mu(H) = \{4,5,6\}$. Involutions of $\Delta(H)$ generate $V=O_2(H)$, and involutions, which are not in $\Delta(H)$ lie in a subgroup isomorphic to $A_5$. This example shows the significance of the condition $O_2(H) \subseteq \Delta$.

\end{section}

\begin{section}{On factors with no elements of order 4}
	
Throughout the paragraph $T$ is a group and $\mu(T) =\{5,6,7\}$. Let $$\Gamma^{\ast}_2=\Gamma^{\ast}_2(T)=\{t\in \Gamma_2(T)  \mid C_T(t) \mbox{ is elementary abelian}\, \}.$$ We assume that this set is non-empty and we choose $a \in \Gamma^{\ast}_2$. Let $$\Lambda_2 = \Lambda_2(T) =\{x^3 \mid x \in \Gamma_6(T) \}. $$ Note that $\Gamma^{\ast}_2$ and $\Lambda_2$ are two normal non-intersecting sets of involutions and $\Lambda_2 \not = \emptyset$. The goal of this paragraph is to prove that 
$\langle \Gamma^{\ast}_2 \rangle$ is a group of exponent $6$.
	
\begin{lemma}\label{l:2inv}
If $b\in \Gamma_2(T)$, then $(ab)^6=1$. In particular, $[a,t]^6=1$ for any $t \in T$.
\end{lemma}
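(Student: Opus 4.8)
The plan is to rule out the two values of $\omega(T)=\{1,2,3,5,6,7\}$ that do not divide $6$, namely $|ab|=5$ and $|ab|=7$. Since $a$ and $b$ are involutions, $\langle a,b\rangle$ is dihedral of order $2\,|ab|$, and when $|ab|\in\{5,7\}$ all reflections of this dihedral group are conjugate, so in those cases $b$ is $T$-conjugate to $a$. Hence the statement, its ``in particular'' form, and the assertion that $|a\,a^{t}|\notin\{5,7\}$ for every $t\in T$ are all equivalent, and it suffices to derive a contradiction from the existence of an involution $b$ for which $c:=ab$ has prime order $p\in\{5,7\}$.

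I would then study $C:=C_T(c)$. An element of $C$ of order $m\notin\{1,p\}$ would, together with $c$, give an element of order $\lcm(m,p)\notin\omega(T)$ (one of $10,15,20,30,35,\dots$), so $C$ has exponent $p$. As $a$ inverts $c$, it normalises $C=C_T(c)=C_T(c^{-1})$, and $C_C(a)\le C_T(a)\cap C_T(c)$ is simultaneously a $2$-group (since $C_T(a)$ is elementary abelian) and a $p$-group, hence trivial. Thus $a$ acts on $C$ as a fixed-point-free automorphism of order $2$, which forces $C$ to be abelian and inverted by $a$. Moreover $N_T(\langle c\rangle)/C$ embeds in $\operatorname{Aut}(\langle c\rangle)$, and the absence in $T$ of elements of order divisible by $4$ restricts this quotient sharply: for $p=5$ it must equal $\langle aC\rangle$, of order $2$, so $N_T(\langle c\rangle)=C\langle a\rangle$ is a generalised dihedral group over $C$ in which every element has order $1$, $2$, or $5$.

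To finish, I would bring in the orders guaranteed by $\mu(T)=\{5,6,7\}$: since $6$ and the remaining prime from $\{5,7\}$ both lie in $\omega(T)$, adjoin to the configuration around $a$ and $c$ an element $x$ of order $6$ (or of order $7$). Its $3$-part cannot centralise $c$ (that would yield an element of order $15$), and for $p=5$ it cannot lie in $N_T(\langle c\rangle)$ either, so it furnishes a genuinely new generator relative to $a$ and $c$. The relations accumulated so far ($a^2=c^p=1$, $a$ inverts $c$, $C$ abelian) together with the list of forbidden element orders of $T$ then pin down a finitely presented group, and a coset enumeration in {\sc Gap} — of the same flavour as those defining $F_{k^2\cdot 6}$, $F_{294}$, and $F_{42}$ — should exhibit in it an element of order $4$, contradicting $4\notin\omega(T)$.

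The main obstacle is precisely this last step. The reductions and the structural description of $N_T(\langle c\rangle)$ are routine; the real work is to select the auxiliary element and the right set of relations so that the coset enumeration terminates and displays the forbidden element rather than running forever. I expect the case $p=7$ to demand a heavier configuration than $p=5$, because $\operatorname{Aut}(\mathbb{Z}/7)\cong\mathbb{Z}/6$ leaves considerably more room inside $N_T(\langle c\rangle)$.
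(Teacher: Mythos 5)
Your opening reduction is the same as the paper's: if $a$ and $b$ are not conjugate then $|ab|$ is even, hence divides $6$ since $4\notin\omega(T)$, so one may assume $b\in\Gamma^{\ast}_2$ and $|ab|=p\in\{5,7\}$. Your structural observations about $C_T(ab)$ are also essentially sound (though the claim that a fixed-point-free involution inverts $C$ needs the order argument ``$(ga)^2=gg^a\in C$ has order $1$ or $p$, and order $p$ would force $|ga|=2p\notin\omega(T)$'' rather than the finite-group counting argument, since $C$ need not be finite or even locally finite a priori). But the proof stops exactly where the difficulty begins. You say that adjoining a suitable auxiliary element and running a coset enumeration ``should exhibit an element of order $4$,'' and you concede that choosing the auxiliary element and the relations so that the enumeration closes is ``precisely the main obstacle.'' That obstacle is the entire content of the lemma; without a concrete presentation that provably collapses, nothing has been proved. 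Moreover, the configuration you propose to enumerate ($a$, $c=ab$ of order $p$ with $C$ abelian and inverted, plus one element of order $6$ or $7$) is not one the paper uses and there is no evidence it terminates.

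The paper's actual mechanism is different and worth noting, because it supplies exactly the missing relations. It introduces the second normal set of involutions $\Lambda_2=\{x^3\mid x\in\Gamma_6(T)\}$, disjoint from $\Gamma^{\ast}_2$, and first arranges an involution $c\in\Lambda_2$ with $[a,c]=1$ (if $|ac|=6$, replace $c$ by the central involution $(ac)^3\in\Lambda_2$ of $\langle a,c\rangle$). Disjointness of the two normal sets then forces every product of a $\Gamma^{\ast}_2$-element with a $\Lambda_2$-element to have even order, hence order dividing $6$; iterating this yields the relation sets $\rho_1,\rho_2$, which together with $\tau(p)$ and the spectrum constraints $\sigma(i,j)$ give finitely many presentations $T(p,i,j)$ whose orders all divide $4$ — contradicting $|ab|=p\geq 5$. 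Your sketch never brings $\Lambda_2$ into play, and it is precisely the interplay of the two disjoint normal sets that makes the enumeration finite. As it stands, the proposal is an unfinished reduction, not a proof.
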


\begin{proof}
Assume the contrary. If the involutions $a$ and $b$ are not conjugated, then the order of $ab$ is even, and  conclusion of Lemma is true. Therefore we assume $b \in  \Gamma^{\ast}_2$.

Let $c \in \Lambda_2$. Then the order of $ac$ is even, and if it equals $6$, then the involution $(ac)^3$ from the center of dihedral subgroup $\langle a,c \rangle$ is in $\Lambda_2$. Therefore $a$ commutes with some element of $\Lambda_2$, and we may assume that $[a,c]=1$. So the following relations hold in $T$ ($p$ odd): $$\tau(p)=\{a^2,b^2,c^2,(ab)^p,[a,c]\}.$$ Sets $\Gamma^{\ast}_2$ and $\Lambda_2$ are normal and do not intersect, so the following relations hold: $$\rho_1=\{(bc)^6,(a^bc)^6,(b^{ab}c)^6\}.$$ Applying similar argument once again, we obtain the relations $$\rho_2=\{(a(bc)^3)^6,(a(a^bc)^3)^6, (a(b^{ab}c)^3)^6\}.$$ It follows that $\langle a,b,c \rangle$ is a homomorphic image of $$T(p,i,j)=\big \langle a,b,c \; \big | \;  \rho_1 \cup \rho_2 \cup \tau(p) \cup \sigma(i,j) \,\big \rangle,$$ where $\sigma(i,j)=\{(abc)^i,(aa^bc)^j\}$. Computations show that the order of $T(p,i,j)$ divides $4$ for all $p \in \{5,7\}$ and $i,j \in \{5,6,7\}$: a contradiction.
\end{proof}
	
\begin{lemma}\label{l:23T}
If $x \in \Gamma_3(T)$, then $(ax)^6=1$.	
\end{lemma}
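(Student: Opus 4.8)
The plan is to mirror the strategy of Lemma \ref{l:2inv}, but now we must control the product of the fixed involution $a$ with an element $x$ of order $3$. As before the interesting case is when $ax$ has odd order $5$ or $7$; if $(ax)$ has even order then $(ax)^2 \in \Gamma_3(T)$ and the even factor lies in $\omega(T) \cap \{2,6\}$, forcing $(ax)^6 = 1$ immediately, so assume $\langle a, x\rangle \simeq \Gamma_5$ or $\Gamma_7$ (a dihedral group of order $10$ or $14$, in fact a Frobenius group). We then want to pair $a$ with a convenient involution from $\Lambda_2$. First I would argue, exactly as in Lemma \ref{l:2inv}, that $a$ commutes with some $c \in \Lambda_2$: pick $y \in \Gamma_6(T)$ with $y^3 = c$; the product $ac$ has even order, and if that order is $6$ then $(ac)^3$ is an involution in the center of the dihedral group $\langle a, c \rangle$ which, being a cube of an element of order $6$, lies in $\Lambda_2$ and centralizes $a$. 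So after replacing $c$ we may assume $[a,c]=1$.

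Next I would write down the relations forced by the hypotheses. We have $a^2 = c^2 = 1$, $[a,c]=1$, $x^3 = 1$, $(ax)^p = 1$ for the relevant $p \in \{5,7\}$. Since $C_T(a)$ is elementary abelian and $x$ has order $3$, the involution $a$ is inverted under conjugation by... no — rather, I would use that $\Gamma^\ast_2$ and $\Lambda_2$ are normal sets that do not meet, so every conjugate of $c$ that happens to be an involution product with $a$ produces order-$6$ relations: for instance $(a c^{x})^6 = 1$, $(a c^{x^2})^6 = 1$, and more generally $(a\,c^{g})^6 = 1$ whenever $g$ is a short word in $x$ and the earlier generators, because $c^g \in \Lambda_2$ cannot be conjugate to $a \in \Gamma^\ast_2$ and hence $ac^g$ has even order, and the order-$6$ case again deposits its central involution into $\Lambda_2$ (so it is never $2$, forcing $6$; an order-$2$ or order-$4$ product is excluded by Lemma \ref{l:2inv} applied to $a$ and the involution $(ac^g)$ or to $a$ and $(ac^g)^2$). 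Layering this observation once more — applying it to $a$ together with cubes of order-$6$ elements built from $c^g$ — yields a second batch of order-$6$ relations, just as $\rho_1$ and $\rho_2$ were produced in Lemma \ref{l:2inv}.

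The upshot is that $\langle a, c, x\rangle$ is a homomorphic image of a finitely presented group $R(p,\ldots)$ whose relators consist of: $a^2, c^2, [a,c], x^3, (ax)^p$; a family of sixth-power relators $(a c^{w})^6$ for a fixed finite list of words $w = w(a,c,x)$; the second family of sixth-power relators coming from the doubled argument; and finally one or two "closing" relators of the form $(acx)^i$, $(a c^x x)^j$ with $i,j$ ranging over $\{5,6,7\}$ (the analogue of $\sigma(i,j)$), needed to make the presented group finite. A {\sc Gap} coset-enumeration computation should then show that $|R(p,\ldots)|$ divides a small number (one expects $12$, the order of $A_4$, reflecting that $\langle x, (ax)(xa)^{-1}\rangle$ or similar is all that survives) for every choice of $p \in \{5,7\}$ and every choice of the closing exponents, contradicting the assumption that $ax$ has order $5$ or $7$.

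The main obstacle I anticipate is purely the bookkeeping of which order-$6$ relations are \emph{legitimately available}: each one relies on showing that a specific element is an involution genuinely lying in $\Lambda_2$ (hence outside $\Gamma^\ast_2$, hence not conjugate to $a$), and that the product with $a$ cannot collapse to order $2$ or $4$ by Lemma \ref{l:2inv}. One must choose the list of words $w$ large enough that the presented group becomes finite of order dividing $12$, yet only use relations that are provably forced — getting this balance right, and confirming it by the coset enumeration, is the delicate part; the rest is a direct transcription of the mechanism already used for Lemma \ref{l:2inv}.
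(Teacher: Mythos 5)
There is a genuine gap: your proposal is a strategy sketch whose decisive step---the choice of relator words $w$ and the collapse of the resulting three-generator presentation under coset enumeration---is left unspecified and unverified, and you say so yourself. Since the entire content of the lemma is that the cases $(ax)^5=1$ and $(ax)^7=1$ cannot occur, a proof that defers exactly this verification is not a proof. There is also a conceptual slip at the outset: with $x$ of order $3$, the group $\langle a,x\rangle$ subject to $a^2=x^3=(ax)^p=1$ is a quotient of the $(2,3,5)$ or $(2,3,7)$ triangle group (i.e.\ of $A_5$, respectively of the infinite Hurwitz group), not a dihedral group of order $10$ or $14$; this matters because the $(2,3,7)$ case is infinite and one must be sure the added relations genuinely kill it, which is precisely the part you have not pinned down.

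More importantly, you overlook the one consequence of Lemma \ref{l:2inv} that makes the lemma almost immediate and renders the auxiliary element $c\in\Lambda_2$ unnecessary: for \emph{any} $t\in T$ one has $[a,t]^6=1$, because $[a,t]=a\cdot a^t$ is a product of two involutions one of which is $a\in\Gamma^{\ast}_2$. Taking $t=x$ gives the extra relation $[a,x]^6=1$ inside the two-generator group $\langle a,x\rangle$, and since $4\notin\omega(T)$ the only orders of $ax$ not already dividing $6$ are $5$ and $7$. The paper then simply checks that $\langle a,x\mid a^2,x^3,(ax)^5,[a,x]^6\rangle$ is trivial and $\langle a,x\mid a^2,x^3,(ax)^7,[a,x]^6\rangle\simeq L_2(13)$, which has elements of order $13\notin\omega(T)$. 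Your route never invokes $[a,x]^6=1$ at all, and without it the proposed presentations have no visible reason to collapse; with it, the third generator and the two layers of $\Lambda_2$-relations are superfluous.
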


\begin{proof} Lemma \ref{l:2inv} implies $[a,x]^6=1$. Therefore $\langle a,x \rangle $ is a homomorphic image of $K(\ell)=\langle a,x \mid a^2,x^3,(ax)^{\ell},[a,x]^6 \rangle$, $\ell \in \{5,6,7\}$. Computations show that $K(5)=1$ and $K(7)\simeq L_2(13)$, which is not possible. So $\ell=6$ as required.	
\end{proof}

\begin{lemma}\label{l:comm}
If $b,c \in \Gamma^{\ast}_2(T)$, then $[(ab)^2,(bc)^2]=1$.
\end{lemma}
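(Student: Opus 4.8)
The plan is to follow the same template as in Lemmas \ref{l:2inv}--\ref{l:23T}: collect a finite list of relations that are forced on the subgroup $\langle a,b,c\rangle$ by the hypotheses $\mu(T)=\{5,6,7\}$ together with the fact that $a,b,c\in\Gamma_2^\ast$, conclude that $\langle a,b,c\rangle$ is a homomorphic image of a finitely presented group depending on a few unknown parameters, and then check by coset enumeration that in every case the relation $[(ab)^2,(bc)^2]=1$ holds (equivalently, that the element $[(ab)^2,(bc)^2]$ is trivial in all the finitely many groups that arise). Concretely, since $a$ and $b$ are conjugate involutions (both lie in $\Gamma_2^\ast$), Lemma \ref{l:2inv} gives $(ab)^6=1$, so $(ab)^2$ has order dividing $3$; likewise $(bc)^2$ and $(ac)^2$ have order dividing $3$. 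Thus $d_1=(ab)^2$, $d_2=(bc)^2$, $d_3=(ac)^2$ are elements of order $1$ or $3$, and the task is to show $[d_1,d_2]=1$.

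First I would record the relations already available: $a^2=b^2=c^2=1$, $(ab)^6=(bc)^6=(ac)^6=1$, and — using Lemma \ref{l:23T} applied to $a$ together with the order-$3$ elements $d_1,d_2,d_3$ and their conjugates — relations of the form $(a\cdot w)^6=1$ for various words $w$ of order dividing $3$ built from $d_1,d_2,d_3$. One also gets analogous relations with $b$ or $c$ in place of $a$, since $b,c\in\Gamma_2^\ast$ as well. Next, because $\Gamma_2^\ast$ and $\Lambda_2$ are disjoint normal sets of involutions, any involution arising inside $\langle a,b,c\rangle$ as a product of two of $a,b,c$-conjugates that happens to land in $\Lambda_2$ cannot be conjugate to $a$, which forces the orders of certain further products to be even; iterating the "multiply by $a$ again" trick as in Lemma \ref{l:2inv} yields a further batch of sixth-power relations. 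Having assembled this finite relation set, together with parametric relations $\sigma$ fixing the remaining orders of products such as $abc$, $d_1d_2$, etc. within $\{5,6,7\}$ (these are the only possibilities allowed by $\mu(T)=\{5,6,7\}$, after excluding the values killed by $\Lambda_2$-normality), I would let $\langle a,b,c\rangle$ be a quotient of the resulting finitely presented group $C(i,j,\dots)$ and verify computationally that $[d_1,d_2]=1$ in each of the finitely many groups $C(i,j,\dots)$.

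The main obstacle I anticipate is choosing the relation set so that the presentations $C(i,j,\dots)$ are actually finite (so that coset enumeration terminates) while still being weak enough to be genuinely implied by the hypotheses on $T$ — this is the same delicate balancing act that makes Lemmas \ref{l:2inv} and \ref{l:23T} work, and here there is one more generator, so the enumerations are heavier and the bookkeeping of which products can legitimately be constrained is more involved. A secondary point requiring care is the justification that the value of each constrained product really does lie in $\{5,6,7\}$ or is excluded on $\Lambda_2$-grounds: one must check that none of the relevant products is itself an involution outside $\Lambda_2$ in a way that would reopen more cases. Once the finite list of candidate groups is pinned down, the final verification that $(ab)^2$ and $(bc)^2$ commute is a routine check in {\sc Gap}. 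If some parameter choice produces a group in which $[d_1,d_2]\neq1$, that group must be shown to have an element of order outside $\{1,\dots,7\}$ (as happened with $L_2(13)$ and $L_2(7)$ in the earlier lemmas), giving the desired contradiction and completing the proof.
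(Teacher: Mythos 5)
Your plan misses the key idea of the paper's proof, and without it the computational strategy you describe is unlikely to go through. The paper does \emph{not} attack the general triple $a,b,c$ directly. It first proves the special case in which $(ab)^3=(bc)^3=1$: there the relations $\rho=\{a^2,b^2,c^2,(ab)^3,(bc)^3\}$ together with the handful of sixth-power relations $\tau=\{(ac)^6,(a^bc)^6,(ab\cdot c)^6,(b\cdot(ac)^3)^6\}$ supplied by Lemmas \ref{l:2inv} and \ref{l:23T} already collapse the presentation to the finite group $3^{1+2}:2$, with no parametric case-splitting at all; since the order-$3$ center must die in the homomorphism onto $\langle a,b,c\rangle$, one gets $[ab,bc]=1$. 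The general case is then \emph{reduced} to this special case by a conjugation trick: $(ab)^2=b^ab$ and $(bc)^2=bb^c$, the elements $b^a,b,b^c$ all lie in the normal set $\Gamma^{\ast}_2(T)$, and $(b^ab)^3=(ab)^6=1=(bc)^6=(bb^c)^3$, so the triple $(b^a,b,b^c)$ satisfies exactly the hypotheses of the special case and $[(ab)^2,(bc)^2]=[b^ab,bb^c]=1$.

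Your proposal instead works with the raw presentation on three involutions whose pairwise products have order dividing $6$. That starting point, $\langle a,b,c\mid a^2,b^2,c^2,(ab)^6,(bc)^6,(ac)^6\rangle$, is an infinite Coxeter group, and the additional sixth-power relations you propose to adjoin are precisely the kind already used in Lemma \ref{l:4inv} --- where the presentation becomes finite only \emph{because} the commutator relations $[(ab)^2,(bc)^2]=1$ (the very statement being proved here) are available to add. So the ``delicate balancing act'' you flag as the main obstacle is not merely heavy bookkeeping: without the reduction to the $(ab)^3=(bc)^3=1$ case, there is no evident finite, provably-valid presentation on which coset enumeration would terminate, and your argument would be circular if you smuggled in the commutator relations to force termination. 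You need the substitution $(a,b,c)\mapsto(b^a,b,b^c)$ (or an equivalent device) to make the computation feasible.
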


\begin{proof} First assume that orders of $ab$ and $bc$ divide $3$ and hence the following relations hold: $$\rho=\{a^2,b^2,c^2,(ab)^3,(bc)^3\}.$$ 
Using Lemmas \ref{l:2inv} and \ref{l:23T} we obtain the following set of relations
$$\tau=\{(ac)^6,(a^bc)^6,(ab \cdot c)^6, (b\cdot (ac)^3)^6\}.$$ Therefore $\langle a,b,c \rangle $ is a homomorphic image of $$K=\big \langle a,b,c \; \big | \; \rho\cup \tau \big \rangle. $$ Computations show that $K\simeq 3^{1+2}:2$. 
The center of $K$ has order $3$ and so it is contained in the kernel of the corresponding homomorphism. It follows that $\langle a,b,c \rangle$ is an extension of an elementary abelian $3$-group by an involution. In particular, $[ab,bc]=1$.

Let now $b,c$ be arbitrary elements of $\Gamma^{\ast}_2(T)$. Then $b,b^a,b^c \in  \Gamma^{\ast}_2(T)$ and $(b^ab)^3=(bb^c)^3=1$. We have shown that $[(ab)^2,(bc)^2]=[b^ab,bb^c]=1$. 
\end{proof}

\begin{lemma}\label{l:4inv}
If $b,c \in \Gamma^{\ast}_2(T)$, then $(abc)^2=1$. Moreover, $[a'b,cd]=1$ for any $a',b,c,d \in \Gamma^{\ast}_2(T)$.
\end{lemma}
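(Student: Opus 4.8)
The plan is to mimic the structure of the proof of Lemma \ref{l:comm}: first establish the identity in a ``small'' configuration where some products have order dividing $3$, identify the relevant finitely presented group by coset enumeration, read off that its relevant quotient is a $3$-group extended by a small $2$-group, and then bootstrap to the general case by a conjugation trick that replaces arbitrary $b,c$ by the elements $b^{a}b$ and $bb^{c}$ (which have order dividing $3$ by Lemma \ref{l:comm} and its proof). For the first statement, I would start with $b,c\in\Gamma_2^{\ast}(T)$ such that $(ab)^3=(bc)^3=(ac)^3=1$ — using that, as in Lemma \ref{l:comm}, any involution in $\Gamma_2^{\ast}$ commutes with or generates an order-$3$ product with a given one after passing to conjugates, so this special case is genuinely the generic one up to conjugation. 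Under these hypotheses, together with the consequences of Lemmas \ref{l:2inv} and \ref{l:23T} (various sixth-power relations among the conjugates $a,b,c,a^b,a^c,b^c,\dots$), the group $\langle a,b,c\rangle$ is a homomorphic image of an explicit finitely presented group $K'$. Coset enumeration should show that $K'$ is (an extension of) a $3$-group by a group of order $2$ — indeed one expects $K'\simeq 3^{1+2}{:}2$ or a closely related small group — in which $abc$ is an involution; the element $(abc)^2$ then lies in the normal $3$-subgroup but is also a product of three involutions, forcing $(abc)^2=1$ after quotienting by the central $3$-subgroup contained in the kernel.

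For the ``moreover'' clause, the first statement already gives that $abc$ is an involution whenever $b,c\in\Gamma_2^{\ast}$ and $a\in\Gamma_2^{\ast}$; hence for $a',b,c,d\in\Gamma_2^{\ast}$ the elements $a'b$ and $cd$ can be rewritten so that the commutator $[a'b,cd]$ is expressed through products of the shape already controlled by Lemma \ref{l:comm}. Concretely, I would use that $a'b=(a'a)(ab)$ where $(a'a)^2$ and $(ab)^2$ commute by Lemma \ref{l:comm}, and similarly decompose $cd$ relative to the same fixed involution $a$, reducing $[a'b,cd]=1$ to commutativity statements among the squares $(ab)^2$ for $b$ ranging over $\Gamma_2^{\ast}$ — which is exactly what Lemma \ref{l:comm} provides (these squares pairwise commute, so they generate an abelian, in fact elementary abelian $3$-, subgroup). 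The orders dividing $3$ of the products $a'b$, $cd$ modulo this abelian subgroup then make $[a'b,cd]$ a product of commuting $3$-elements equal to $1$.

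The main obstacle I anticipate is \emph{not} the algebra of the reduction but choosing the right finite presentation for the first step: one must include enough sixth-power relations (coming from Lemmas \ref{l:2inv}, \ref{l:23T}, \ref{l:comm} applied to all the relevant conjugates $a,a^b,a^c,b^c,\dots$ and their products) so that coset enumeration terminates and returns a group small enough — essentially a $\{2,3\}$-group with $2$-part of order $2$ — yet not so many that one accidentally trivializes $\langle a,b,c\rangle$ or excludes the configuration one needs. Getting the relator set exactly right, as in the statements of the groups like $3^{1+2}{:}2$ introduced in paragraph 2, is where the real care lies; once the presentation is pinned down the rest is the same quotient-by-the-central-$3$-subgroup argument used in Lemma \ref{l:comm}, followed by the conjugation bootstrap from $\{b^{a}b,\,bb^{c}\}$ to arbitrary elements of $\Gamma_2^{\ast}(T)$.
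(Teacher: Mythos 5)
Your reduction to the special case $(ab)^3=(bc)^3=(ac)^3=1$ does not work here, and this is the central gap. In Lemma \ref{l:comm} the bootstrap succeeds because of the identity $[(ab)^2,(bc)^2]=[b^ab,bb^c]$: the conclusion for arbitrary $b,c$ is literally the special-case conclusion applied to the triple $(b^a,b,b^c)$, whose relevant products $b^ab$ and $bb^c$ have order dividing $3$. There is no analogous identity expressing $(abc)^2$ for arbitrary $a,b,c\in\Gamma^{\ast}_2$ through a triple of involutions with pairwise products of order $3$: the elements $b^ab$ and $bb^c$ are $3$-elements, not involutions, so the special case of ``$(abc)^2=1$'' cannot be applied to a triple built from them, and replacing $b$ or $c$ by conjugates changes the very element $abc$ whose square you must control. (Nor can you arrange $(ab)^3=1$ by conjugating: the order of $ab$ is an invariant of the pair and may well be $2$ or $6$.)

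The paper instead treats general $b,c$ directly. With $\sigma$ consisting of $(ab)^6=(bc)^6=(ac)^6=1$ together with the three commutation relations among $(ab)^2,(bc)^2,(ca)^2$ supplied by Lemma \ref{l:comm}, coset enumeration gives $|T(5)|=|T(7)|=4$, so $abc$ has order $6$ and $w=(abc)^2$ is a $3$-element. The decisive step --- entirely absent from your proposal --- is the use of the defining property of $\Gamma^{\ast}_2$: in $T(6)$ the element $ww^a$ has order $3$ and centralizes $a$, and since $C_T(a)$ is elementary abelian (a $2$-group) this forces $ww^a=1$ in $T$, i.e.\ $w^a=w^{-1}$; likewise $w^b=w^c=w^{-1}$, whence $w=w^{abc}=w^{-1}$ and $w=1$. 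Without this centralizer argument you can only reach $(abc)^6=1$, not $(abc)^2=1$. Finally, your treatment of the ``moreover'' clause is an unneeded detour: once $(a'bc)^2=(a'bd)^2=1$ one has $(a'b)^c=(a'b)^{-1}=(a'b)^d$, hence $(a'b)^{cd}=a'b$ immediately.
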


\begin{proof}
Let $w=(abc)^2$ and let
$$ \sigma=\{ a^2,b^2,c^2,(ab)^6,(bc)^6,(ac)^6,[(ab)^2,(bc)^2],[(bc)^2,(ca)^2],[(ca)^2,(ab)^2 \},$$
by Lemma \ref{l:comm} $\langle a,b,c \rangle$ is a homomorphic image of $$T(i)= \big \langle a,b,c \mid \sigma \cup \{(abc)^{i} \} \big \rangle, \;\;\;\; i \in \{5,6,7\}.$$ Computations show that $|T(5)|=|T(7)|=4$, therefore $i=6$ and the order of $w\in T$ divides $3$. With further computations we can prove that in $T(6)$ the element $ww^a$ has order $3$ and centralizes $a$. So in $T$ we have $w^a=w^{-1}$. Similarly $w^b=w^{-1}$ and $w^c=w^{-1}$. It follows that $w=w^{abc}=w^{-1}$. The order of $w$ divides $3$, so $w=1$.

In order to prove that $[a'b,cd]=1$, we can assume $a'=a$. We have $(abc)^2=1=(abd)^2$. Consequently, $(ab)^c=(ab)^{-1}=(ab)^d$ and therefore $(ab)^{cd}=ab$. 
\end{proof}

\begin{lemma}\label{l:no4}
Let $T$ be a group and $\mu(T)=\{5,6,7\}$. Then $\langle \Gamma^{\ast}_2(T) \rangle$ is a locally finite group of period $6$.
\end{lemma}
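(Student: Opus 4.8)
The plan is to assemble the preceding lemmas into a structural description of $N=\langle\Gamma^\ast_2(T)\rangle$. By Lemma~\ref{l:4inv} every product of two elements of $\Gamma^\ast_2$ commutes with every other such product, so the set $D=\{(xy)^2 \mid x,y\in\Gamma^\ast_2\}$ generates an abelian normal subgroup $A\trianglelefteq N$. By Lemma~\ref{l:2inv} each $(xy)^2$ has order dividing $3$, so $A$ is an abelian $3$-group of exponent $3$, i.e. elementary abelian. I would first check that $N=A\langle\Gamma^\ast_2\rangle$ with $N/A$ generated by images of involutions, and that conjugation by any $x\in\Gamma^\ast_2$ inverts $A$: indeed for $y,z\in\Gamma^\ast_2$ we have $((yz)^2)^x=(y^xz^x)^2$, and $(xy^x)$, $(y^xz^x)$, $(z^xx)$ generate a subgroup in which, by the computation already used in Lemma~\ref{l:4inv}, the relevant product of squares is inverted by $x$; more directly, $(xyx\cdot z)^2=1$-type relations give $(yz)^x=(yz)^{-1}$ on the level of these square elements. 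Hence every element of $\Gamma^\ast_2$ acts on $A$ as $-1$.

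Next I would show $N/A$ is an elementary abelian $2$-group. Pick $a\in\Gamma^\ast_2$ as the basepoint. For $b,c\in\Gamma^\ast_2$, Lemma~\ref{l:4inv} gives $(abc)^2=1$, so modulo $A$ the images of $ab$ and $ac$ commute (their product $ab\cdot ac=(abc)\cdot c\cdot$… — more cleanly: $ab$ and $bc$ have square in $A$, and $[(ab)^2,(bc)^2]=1$, while $(ab)(bc)=ac$ forces, together with $(abc)^2=1$, that the images $\overline{ab},\overline{bc}$ generate an elementary abelian $2$-group modulo $A$). Thus $N/A$ is generated by involutions that pairwise commute, so $N/A$ is elementary abelian of exponent $2$, and $N$ is an extension of an elementary abelian $3$-group by an elementary abelian $2$-group. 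In particular $N$ is (locally) nilpotent-by-abelian of exponent dividing $6$: any element $n\in N$ maps to an element of order dividing $2$ in $N/A$, so $n^2\in A$ has order dividing $3$, whence $n^6=1$, giving that $N$ has exponent dividing $6$.

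Finally, local finiteness: a group of finite exponent need not be locally finite in general, but here $N$ has a normal abelian (hence locally finite) subgroup $A$ with $N/A$ elementary abelian $2$, and a finitely generated group that is abelian-by-(elementary abelian) of bounded exponent is finite — concretely, a finitely generated subgroup $M\le N$ has $M\cap A$ finitely generated as a module over the finite group $M/(M\cap A)$ acting by $-1$, hence finite, so $M$ is finite. Therefore $N=\langle\Gamma^\ast_2(T)\rangle$ is locally finite of period dividing $6$; since $\Gamma^\ast_2\neq\emptyset$ contains an involution, the exponent is exactly $6$ (or $2$, but the presence of elements $ab$ of order $3$ or $6$ coming from $\mu(T)=\{5,6,7\}$ rules out exponent $2$ once $|\Gamma^\ast_2|>1$; if $\langle\Gamma^\ast_2\rangle$ collapses to order $2$ the statement still holds trivially).

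The main obstacle I anticipate is the bookkeeping in the step showing $N/A$ is elementary abelian and that $\Gamma^\ast_2$ acts by inversion on $A$: one must verify that the commutation and $(abc)^2=1$ relations, which Lemmas~\ref{l:comm} and~\ref{l:4inv} establish only for triples, propagate to arbitrarily many generators. The clean way is to note $A$ is generated by the $N$-conjugates of the finitely many square-elements, use that $\Gamma^\ast_2$ is a normal subset closed under conjugation, and argue inside $3$-generated subgroups as in Lemma~\ref{l:comm}, reducing every needed identity to one already proved by the {\sc Gap} computations.
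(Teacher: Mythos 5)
Your proposal is correct and follows essentially the same route as the paper: both arguments rest entirely on Lemma~\ref{l:4inv}, which makes the products of pairs of involutions from $\Gamma^{\ast}_2$ pairwise commuting, yielding a normal abelian subgroup of exponent~$6$ (in your version, sliced as an elementary abelian $3$-group $A$ with $N/A$ elementary abelian of exponent $2$) over which everything is finite once finitely many generators are fixed. The paper's proof is just the more direct version of this — $K=\langle a_ia_j\rangle$ abelian of exponent $6$ and of index at most $2$ in $\langle a_1,\dots,a_n\rangle$ — and your extra step about $\Gamma^{\ast}_2$ inverting $A$ is true but not needed for the conclusion.
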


\begin{proof}
Let $a_1, a_2, \ldots, a_n \in \Gamma^{\ast}_2(T)$ and let $H=\langle a_1, a_2, \ldots, a_n \rangle$. By Lemma \ref{l:4inv} the subgroup $K=\langle a_ia_j \mid i,j \in \{1,2,\ldots,n\} \rangle$ is normal and abelian of exponent 6. Therefore $H=K\langle a_1 \rangle$ is finite and satisfies lemma's conclusion. Any element of  $\langle \Gamma^{\ast}_2(T) \rangle$ can be written as a finite word of generators and this proves the lemma.
\end{proof}

\end{section}

\begin{section}{Reduction of the $2$-radical}
	
The goal of the paragraph is to prove the following
	
\begin{prop}\label{s:o2}
$O_2(G)=1$.
\end{prop}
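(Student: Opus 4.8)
\medskip\noindent\emph{Proof plan.} Suppose, for a contradiction, that $V:=O_2(G)\ne 1$. Since $\omega(G)$ contains no power of $2$ larger than $4$, the group $V$ has exponent dividing $4$ and hence is locally finite by Sanov's theorem; as an extension of a locally finite group by a locally finite group is locally finite, Lemma \ref{l:nonlf} forces $\bar G:=G/V$ to be non-locally-finite.

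The first move is to reduce to the case where $V$ is elementary abelian. Passing from $G$ to $G/\Phi(V)$ replaces $O_2$ by the elementary abelian group $V/\Phi(V)$, of strictly smaller exponent unless $V$ was already elementary abelian, so by the minimal choice of $G$ it suffices to check that $G/\Phi(V)$ keeps the spectrum $\{1,\dots,7\}$. Elements of odd order persist, and elements of order $6$ are easily handled; the only real issue is order $4$, which survives as long as some element of order $4$ of $G$ has square outside $\Phi(V)$. The exceptional configuration, in which every order-$4$ element squares into $\Phi(V)$, is very rigid (it confines $\Delta$ to a proper $G$-invariant subgroup of $V$) and I would treat it separately along with the case below. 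So from now on $V$ is elementary abelian.

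Next record the action of $G$ on $V$ forced by the spectrum. If $g$ centralizes some $1\ne v\in V$ with $|g|=7$ (respectively $|g|=5$), then $gv$ has order $14$ (respectively $10$), which is impossible; hence every element of order $5$ or $7$ acts fixed-point-freely on $V$ by conjugation. For $g\in\Gamma_4(G)$ one has a dichotomy: either $g^2\notin V$, so the image of $g$ has order $4$ in $\bar G$, or $g^2\in\Delta\cap V$. Accordingly $\bar G$ either contains an element of order $4$, or it does not --- in which case $\Delta\cap V\ne\emptyset$, the subgroup $\langle\Delta\cap V\rangle$ is a nontrivial $G$-invariant subgroup of $V$ generated by involutions lying in $\Delta$, and $\mu(\bar G)\subseteq\{5,6,7\}$. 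The argument splits along this dichotomy. In the first case, lift an element of order $4$ to $x\in G$ ($|x|=4$) and play the $\langle x\rangle$-module structure of $V$ off against an element of order $3$, aiming to produce either a subgroup $A_4$ of $G$ with involutions in $\Delta$, so that Statement \ref{s:arxiv} supplies a finite simple subgroup $A_5$ or $L_2(7)$ with involutions in $\Delta$, or directly an element of order $8$, $10$, $12$ or $14$. In the second case, after discarding the degenerate spectra --- a group of exponent dividing $6$ is locally finite by M.\,Hall's theorem, contradicting the first paragraph, and the cases omitting exactly one of $5$, $7$ are excluded by lifting $p$-elements --- we are left with $\mu(\bar G)=\{5,6,7\}$, so the whole of \S\,3 applies with $T=\bar G$ (the case $\Gamma^{\ast}_2(\bar G)=\emptyset$ being handled on its own), and the resulting control over the involutions of $\bar G$, together with Lemma \ref{l:shunk}, is pushed to a contradiction.

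The step I expect to be the main obstacle is the module analysis in the first case: the data "$\omega(G)=\{1,\dots,7\}$" and "$5$- and $7$-elements act freely on $V$" are far from pinning down $V$ as a $G/C_G(V)$-module, so extracting either the $A_4$ required by Statement \ref{s:arxiv} or an element of forbidden order demands a delicate interplay of the structure theory of \S\,3, Statement \ref{s:arxiv}, and Lemma \ref{l:shunk}, each of which only becomes usable once the module has been constrained by the available element orders. The elementary-abelian reduction, routine in outline, also carries the delicate point of keeping order-$4$ elements alive in the quotient.
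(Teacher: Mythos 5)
Your proposal is a plan rather than a proof, and the two places where you defer the work are precisely where it fails or where the paper's actual argument lives. Consider first your ``order $4$ survives'' case, where $\bar G=G/V$ contains an element of order $4$: then $\omega(\bar G)$ automatically contains $1,2,3,4,5,7$, so either $\omega(\bar G)=\omega(G)$ or $\mu(\bar G)=\{3,4,5,7\}$. The paper disposes of both at once by recognition: in the first subcase $O_2(\bar G)=1$, so the minimal choice of the counterexample (smallest exponent of $O_2$) forces $\bar G\simeq A_7$; in the second, the recognizability of $L_3(4)$ by spectrum in the class of all periodic groups \cite{l34e} forces $\bar G\simeq L_3(4)$; either way $\bar G$ is finite and Schmidt's theorem makes $G$ locally finite, contradicting Lemma \ref{l:nonlf}. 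Your proposed substitute --- a module analysis of $V$ against an order-$4$ and an order-$3$ element, aiming at an $A_4$ with involutions in $\Delta$ or at a forbidden element order --- is exactly the step you concede you cannot pin down, and nothing in the hypotheses constrains the module enough to make it close; this is why the paper reaches for recognition theorems here rather than for Statement \ref{s:arxiv}.

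Second, your case analysis when $\bar G$ has no element of order $4$ is incomplete. You conclude $\mu(\bar G)=\{5,6,7\}$ after discarding ``degenerate spectra'', but the spectra you discard (exponent dividing $6$, or omitting $5$ or $7$) cannot occur anyway, while spectra you omit do occur as live possibilities: an element of order $6$ of $G$ may map to an element of order $3$ in $\bar G$, and $\bar G$ may have no involutions at all, so $\mu(\bar G)$ can also be $\{2,3,5,7\}$ or $\{3,5,7\}$. These are the paper's cases 3 and 4, handled respectively by Mazurov's classification of infinite groups with abelian centralizers of involutions \cite{maze} and by Zhurtov's theorem on regular automorphisms of order $3$ \cite{zh2000e}; neither is touched by your plan. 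Even in the surviving case $\mu(\bar G)=\{5,6,7\}$, ``apply \S 3 and push to a contradiction'' hides the substance: one must first show by a Frattini-quotient argument that the centralizer in $\bar G$ of the image of an order-$4$ element is elementary abelian, so that Lemma \ref{l:no4} becomes applicable, and then run separate arguments according as $\Gamma_4(G)\subseteq V$ or not, using the fixed-point-free action of order-$5$ elements and the subgroup $C_G(\Gamma_2(Z(V)))$. Finally, your preliminary reduction to $V$ elementary abelian leaves unresolved exactly the configuration (every order-$4$ element squaring into $\Phi(V)$) on which it depends; the paper never makes this reduction and does not need it.
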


\begin{proof}
Assume $N=O_2(G) \neq 1$ and let $\overline{G} = G /N$. 

By a well known result of Sanov \cite{sane}, $N$ is locally finite. Let $x \in \Gamma_5(G)$ 
then, by Schmidt's theorem, $\langle N,x \rangle$ is locally finite and by Thompson's theorem $N$
is locally nilpotent. By Higman's theorem \cite{higfree}, $N$ is nilpotent and, in particular, $Z(N) \not =1$.

We consider several cases.

{\bf 0.} $\omega(G)=\omega(\overline{G})$. By the choice of $G$, Statement's conclusion holds for $\overline{G}$, and so $\overline{G} \simeq A_{7}$.
By Schmidt's theorem $G$ is locally finite, a contradiction by Lemma \ref{l:nonlf}.

{\bf 1.}  $\mu (\overline{G}) = \{ 3,4,5,7\}$. Then, from \cite{l34e}, we have $\overline{G} \simeq L_3(4)$. By Schmidt's theorem $G$ is locally finite, a contradiction by Lemma \ref{l:nonlf}.

{\bf 2.} $\mu (\overline{G}) = \{ 5,6,7\}$. In this case $\Delta \subseteq N$. 

{\bf 2.1.}  Assume that there is $h \in G$ of order $4$ which is not in $N$. Then $\overline{h}$ is an involution in $\overline{G}$. 

We first prove that $C_{\overline{G}}(\overline{h})$ is an elementary abelian 2-group. Let $C$ be a full preimage of  $C_{\overline{G}}(\overline{h})$ in $G$. Assume that $C$ contains an element $x$ of order $3$, then $[x,h] \in N$. Hence $[x,N \langle h \rangle] \subseteq N$. Note that $\langle x,h,N \rangle$ is locally finite as it is an extension of a locally finite group $N$ by a  $(2,3)$-generated (and hence finite \cite[Lemma 2.1]{m2020e}) group $\langle \overline{x} , \overline{h} \rangle$. Changing $N$ to a minimal $\langle x,h \rangle$-invariant subgroup in $N$ that contains $N \cap \langle x,h \rangle$, we may assume that $\langle x,h,N \rangle$ is finite. Let $P=\langle h,N \rangle$. Since $[x,P] \subseteq N$ then $P$ is a finite $\langle x \rangle$-invariant $2$-subgroup of $\langle x,h,N \rangle$. Denote by $\Phi=\Phi(P)$ the Frattini subgroup of $P$. Note that $h^2 \in \Phi = P'\Delta$ and $P / \Phi$ is elementary abelian. Since $x$ is a nontrivial automorphism of $P$, it acts nontrivially on $P/\Phi$. Consequently all basis elements $h\Phi,h^x\Phi,h^{x^2}\Phi$ of $P / \Phi$ are in $[x\Phi,P/\Phi]$, and $C_{P/\Phi}(x)=1$. The equality $P/\Phi=[x\Phi,P/\Phi] \times C_{P/\Phi}(x\Phi)$ now imply that $P/\Phi = [x\Phi,P/\Phi] \subseteq N/\Phi$, hence $P \subseteq N$, a contradiction.

Therefore, $\overline{h} \in \Gamma^{\ast}_2(G/N)$ and $\overline{H}=\langle  \Gamma^{\ast}_2(G/N) \rangle$ is a locally finite group of exponent $6$ by Lemma \ref{l:no4}. Recall that $N=O_2(G)$, therefore $\overline{H}$ contains an element of order $3$. Let $H$ be the full preimage of $\overline{H}$ in $G$. Then $H$ is a normal locally finite $\{2,3\}$-subgroup of $G$, containing an element of order $3$ and an element $h$ of order $4$. Let $x \in \Gamma_3(H)$ and $y\in \Gamma_5(G)$. By Schmidt's theorem $H \rtimes \langle y \rangle$ is locally finite. Therefore $K=\langle x,h \rangle^K \rtimes \langle y \rangle$ is a finite group. An element $y$ acts on the subgroup $S=\langle x,h \rangle ^K$ fixed point freely. Therefore $S$ is nilpotent and so is a direct product of its Sylow subgroups, in particular, it contains an element of order $12$, which is not possible.

{\bf 2.2.} Let $\Gamma_4(G) \subseteq N$. 

If $a \in \Gamma_2(G)\backslash N$ and $b \in \Gamma_2(N)$, then $N\langle a \rangle$ is a $2$-group, and so $(ab)^4=1$. If $ab \in \Gamma_4 \subset N$, then $a \in N$, a contradiction. Therefore $(ab)^2=1$. Let $C=C_G(\Gamma_2(Z(N)))$. By construction $\Gamma_4 \subseteq N \leq C$. As already proven $\Gamma_2 \subseteq C$. The set $\Gamma_2(Z(N))$ is normal in $G$, therefore, $C \triangleleft G$.

We have $Z(N) \not =1$, therefore $C$ is contained in the centralizer of an involution, and so has no elements of orders $5$ and $7$. An element $x \in \Gamma_5(G)$ acts freely on $C$. Using arguments above deduce that $C$ is nilpotent. It follows that $C$ has no elements of order $3$, as $12 \not \in \omega(G)$. Therefore $O_2(G)=C \not = N$, a contradiction.

{\bf 3.} $\mu (\overline{G}) = \{ 2,3,5,7\}$. By a result of Mazurov \cite{maze} one of the following cases holds:

{\bf 3.1.} $\overline{G}$ is an extension of an abelian $2$-group $\overline{V}$ by a group with no involutions. Let $V$ be a full preimage of $\overline{V}$ in $G$. Then $O_2(G)=V \not =N$, a contradiction.

{\bf 3.2.} $\overline{G}$ is locally finite. By Schmidt's theorem $G$ is locally finite, a contradiction with Lemma \ref{l:nonlf}.

{\bf 4.} $\mu (\overline{G}) = \{ 3,5,7\}$.  Consider the following characteristic subgroups $L=\langle \Gamma_4 (N) \rangle$ and $D=\langle \Delta(L) \rangle$. Since $N$ is locally finite and nilpotent we have $L \not = D$. Let $\overline{G}=G /D$ and $x \in \Gamma_3(G)$. Using similar arguments as in 2.1 obtain $C_{\overline{L}}(\overline{x})=1$, i.e. $\overline{x}$ acts fixed point freely on $\overline{L} \not =1$. It follows that $\langle \overline{x} \rangle$ is a normal subgroup of $\overline{G}\,$  \cite[Theorem 3]{zh2000e}, and so it is necessarily central. A contradiction, since $15,21 \not \in \omega(G)$. 
\end{proof}	

\end{section}

\begin{section}{Existence of nonabelian finite simple subgroup}

The goal of the paragraph is to prove the following

\begin{prop}\label{fsg}
$G$ has a subgroup $H$ isomorphic to $A_5$ or to $L_2(7)$ such that $\Gamma_2(H) \subseteq \Delta$.
\end{prop}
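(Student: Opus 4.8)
The plan is to produce inside $G$ either a subgroup isomorphic to $A_5$ or $L_2(7)$ whose involutions lie in $\Delta$, or a subgroup $H\simeq A_4$ with $\Gamma_2(H)\subseteq\Delta$; in the latter case Statement \ref{s:arxiv} applies, and since Statement \ref{s:o2} gives $O_2(G)=1$, the alternative $O_2(H)\subseteq O_2(G)$ is impossible, so $G$ does contain the required $A_5$ or $L_2(7)$. Because $\Delta$ is a conjugation-closed set of involutions and $A_4\simeq\langle a,x\mid a^2,x^3,(ax)^3\rangle$, for the $A_4$ alternative it suffices to find $a\in\Delta$ and $x\in\Gamma_3(G)$ with $o(ax)=3$: then $\langle a,x\rangle$ is a non-cyclic homomorphic image of $A_4$, hence isomorphic to $A_4$, and its involutions $a,a^x,a^{x^2}$ all lie in $\Delta$. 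Note that $\Delta\neq\emptyset$ and $\Gamma_3(G)\neq\emptyset$ since $4,3\in\omega(G)$.

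The core of the argument is to fix $g\in\Gamma_4(G)$, put $a=g^2\in\Delta$, and, running over $x\in\Gamma_3(G)$, control $n=o(ax)\in\{2,\dots,7\}$. The cases $n\in\{3,4,5\}$ settle everything at once. For $n=3$ this is the reduction above. For $n=4$, $\langle a,x\rangle$ is a homomorphic image of the von Dyck group $\langle r,s\mid r^2,s^3,(rs)^4\rangle\simeq S_4$ still containing elements of orders $3$ and $4$, hence $\langle a,x\rangle\simeq S_4$; its Klein four normal subgroup contains $(ax)^2$, which lies in $\Delta$ because $ax\in\Gamma_4(G)$, together with its two $S_4$-conjugates, which therefore also lie in $\Delta$, so the $A_4\leq S_4$ meets the requirement. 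For $n=5$, $\langle a,x\rangle$ is a non-trivial image of $\langle r,s\mid r^2,s^3,(rs)^5\rangle\simeq A_5$, hence $\simeq A_5$, and all its involutions are conjugate to $a\in\Delta$, so Proposition \ref{fsg} holds directly. Consequently we may assume that for every $g\in\Gamma_4(G)$ and every $x\in\Gamma_3(G)$ one has $o(g^2x)\in\{2,6,7\}$ and, likewise, that no product of an involution of $G$ with an element of $\Gamma_3(G)$ has order $4$.

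The remaining task is to refute this residual configuration, and this is where I expect the main obstacle to lie. When $o(ax)\in\{6,7\}$ the subgroup $\langle a,x\rangle$ is a homomorphic image of the non-spherical $(2,3,6)$- or $(2,3,7)$-triangle group, so additional relations must first be extracted from $\omega(G)=\{1,\dots,7\}$ before coset enumeration can pin down $\langle a,x\rangle$; here the groups introduced in \S2 — images of $F_{k^2\cdot 6}$, in particular $F_{294}$ and $F_{42}$, the group $3^{1+2}:2$, and the Hurwitz group $L_2(7)$ — are exactly the relevant finite images, and in the $L_2(7)$ case one is again done because all its involutions are conjugate to $a\in\Delta$. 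I would then pass to the normal subgroup generated by $\Delta$ and $\Gamma_3(G)$, or, if that proves unwieldy, work inside the infinite centralizer $C_G(a)$ provided by Lemma \ref{l:shunk}: the standing assumption makes all of its $\{2,3\}$-generated subgroups finite of one of the listed shapes, so by Schmidt's, Thompson's and Higman's theorems the subgroup is locally finite with very rigid $2$-structure, and, feeding in $O_2(G)=1$, one tries to force it either to contain after all an $A_4$ with involutions in $\Delta$, or else to make $G$ locally finite (contradicting Lemma \ref{l:nonlf}) or $O_2(G)\neq 1$ (contradicting Statement \ref{s:o2}). The delicate point is precisely to show that, by suitably varying the order-$4$ element $g$ and the order-$3$ element $x$ — and by exploiting the non-emptiness of the $\Lambda_2$-type sets of \S3 — one cannot avoid a product of order $3$, $4$ or $5$ indefinitely.
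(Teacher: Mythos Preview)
Your reduction is correct and matches the paper: finding an $A_4$ with involutions in $\Delta$ suffices by Statement~\ref{s:arxiv} together with $O_2(G)=1$, and your treatment of $o(ax)\in\{3,4,5\}$ is fine. But the proposal is not a proof. You explicitly concede that the residual case $o(ax)\in\{2,6,7\}$ is ``where I expect the main obstacle to lie'' and then offer only a vague programme. That residual case is essentially the entire content of the Proposition, and the tools you reach for --- working inside $C_G(a)$ via Schmidt/Thompson/Higman, or importing the $\Lambda_2$-machinery of \S3 --- are not the ones that carry the argument; the material of \S3 is tailored to the quotient with $\mu=\{5,6,7\}$ and plays no role here.

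What the paper actually does after your easy cases (packaged as Lemma~\ref{2+3-A4}, which already gives $(ax)^6=1$ and $[a,x]^p=1$ with $p\in\{3,5,7\}$ odd) is a chain of eliminations you have not anticipated. First $p\neq 7$: Lemma~\ref{l:noF42} rules out $F_{42}$-subgroups with involutions in $\Delta$ by a four-step analysis that pins down $C_G(x)$ for $x\in\Gamma_3(F_{42})$. Then $p\neq 5$: Lemmas~\ref{l:noD10}--\ref{l:D10} show no $a\in\Delta$ inverts an element of order~$5$, via a connectivity argument on a graph with vertex set $\Delta$ and a bespoke Baer--Suzuki analogue for $p=3$ (Lemma~\ref{l:bs3}). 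With $p=3$ forced, Lemma~\ref{l:3and3} shows that any order-$3$ element inverted by an element of $\Delta$ lies in $O_3(G)$; the proof then passes to $\overline{G}=G/O_3(G)$ and separately disposes of $\mu(\overline{G})=\{3,4,5,7\}$, $\{4,5,6,7\}$, and $\{4,5,7\}$, the last requiring a further five-step argument about the set $X=\{x\mid a^x\in C_{\overline{G}}(a)\}$ and an auxiliary lemma (Lemma~\ref{l:O3}) controlling $N_{\overline{G}}(\overline{S})$ for $2$-subgroups $\overline{S}$. None of this structure is present in your sketch, and there is no indication that your proposed line would reach it.
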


Throughout the paragraph we assume the contrary, i.e. that {\it $G$ has no finite nonabelian simple subgroups $H$ with $\Gamma_2(H) \subseteq \Delta$.}	

\begin{lemma}\label{2+3-A4}
If $t \in \Delta$ and $x \in \Gamma_3$, then $(xt)^6=[x,t]^p=1$ and $p \in \{3,5,7\}$.
\end{lemma}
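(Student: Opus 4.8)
The plan is to lean on two standing facts. First, $\Delta$ is invariant under conjugation, since $\Gamma_4$ is. Second, by Statement~\ref{s:o2} we have $O_2(G)=1$, so Statement~\ref{s:arxiv} forbids any subgroup $H\cong A_4$ all of whose involutions lie in $\Delta$: for such an $H$ we would have $O_2(H)\subseteq O_2(G)=1$, which is absurd, and so $G$ would contain a copy of $A_5$ or $L_2(7)$ with involutions in $\Delta$, contrary to the hypothesis in force throughout this paragraph. That same hypothesis also forbids any finite nonabelian simple subgroup whose involutions lie in $\Delta$. Since $A_4$, $A_5$, $L_2(7)$ each have all their involutions in a single conjugacy class, and in $S_4$ the double transpositions form one class (into which a $4$-cycle squares), as soon as one involution of such a subgroup lies in $\Delta$ the whole subgroup — and in the $S_4$ case its $A_4$ — becomes a forbidden configuration.

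Fix $t\in\Delta$, $x\in\Gamma_3$, and let $n$ be the order of $xt$, so $n\in\{1,\dots,7\}$ and $\langle x,t\rangle$ is a homomorphic image of $\langle x,t\mid x^3,t^2,(xt)^n\rangle$. I would dispose of the small cases directly. The value $n=1$ is impossible. If $n=2$ the defining group is $S_3$, and the only quotient containing elements of orders $2$ and $3$ is $S_3$ itself, whence $x^t=x^{-1}$, $[x,t]=x^{-2}$ has order $3$, and $(xt)^6=1$. If $n=3$ the defining group is $A_4$, so $\langle x,t\rangle\cong A_4$ (the other quotients contain no involution); its involutions are conjugate to $t$, hence in $\Delta$ — a forbidden $A_4$. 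If $n=4$ the defining group is $S_4$ and, since the image contains an element of order $4$, $\langle x,t\rangle\cong S_4$; now $(xt)^2\in\Delta$ lies in the Klein four $V\trianglelefteq S_4$, and together with its $\langle x\rangle$-conjugates we get $V\setminus\{1\}\subseteq\Delta$, so $V\rtimes\langle x\rangle\cong A_4$ is forbidden. If $n=5$ the defining group is the simple group $A_5$, so $\langle x,t\rangle\cong A_5$, a forbidden simple subgroup. Hence $n\in\{2,6,7\}$.

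The crux is to eliminate $n=7$ and to bound $c:=[x,t]$, and this is exactly where the hypothesis $t\in\Delta$ must be used: write $t=h^2$ with $h\in\Gamma_4$. I would work inside $\langle x,h\rangle$, adjoining to $x^3=h^4=1$ the relators bounding by at most $7$ the orders of the relevant short words in $x$ and $h$ (legitimate since $\omega(G)\subseteq\{1,\dots,7\}$), and then run coset enumeration in \textsc{Gap} on the resulting finitely presented group, in the spirit of Lemmas~\ref{l:2inv}--\ref{l:4inv}. The aim is to show that in every case the group is finite and contains one of the forbidden configurations (an $A_4$, $S_4$, $A_5$ or $L_2(7)$ with involutions in $\Delta$), or else has an element of order outside $\{1,\dots,7\}$. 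In particular $n=7$ cannot occur — note $\langle x,t\rangle\cong L_2(7)$ is itself forbidden, so only the genuinely infinite $(2,3,7)$-quotients need the computation — and the same enumeration bounds $c$: since $c^t=c^{-1}$ the subgroup $\langle c\rangle\langle t\rangle$ is dihedral, and ruling out $|c|\in\{2,4,6\}$ (the even cases again yielding a forbidden $A_4$ or element orders $>7$) leaves $|c|\in\{1,3,5,7\}$. This gives $(xt)^6=1$ and $[x,t]^p=1$ for some $p\in\{3,5,7\}$.

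I expect the genuine obstacle to be the $(2,3,7)$ situation. The abstract groups $\langle x,t\mid x^3,t^2,(xt)^7\rangle$ and even $\langle x,t\mid x^3,t^2,(xt)^6\rangle$ are infinite, so one cannot finish by a naive presentation; the argument genuinely requires the extra relator coming from $t=h^2$ together with the spectrum bound, and the delicate point is choosing auxiliary relators that make the coset enumeration terminate while still being valid in $G$ — precisely the kind of bookkeeping carried out in the earlier lemmas of the paper.
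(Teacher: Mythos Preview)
Your case analysis for $n=|xt|\le 5$ is correct and is, in effect, a by-hand reconstruction of part of the classification in \cite[Lemma~2.1]{m2020e}. The paper's proof, however, does not do any of this work itself: it simply invokes that lemma, which lists \emph{all} groups generated by an involution and an element of order~$3$ whose spectrum lies in $\omega(A_7)$. From that list one sees immediately that every possibility with $|xt|\notin\{2,6\}$, and every possibility with $|xt|=6$ but $|[x,t]|$ even, contains a copy of $A_4$; the standing hypothesis together with Statement~\ref{s:arxiv} (exactly as you argued) then excludes these. The parity of $|[x,t]|$ is handled by a direct citation of \cite[Lemma~9]{mamontov2013e}. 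That is the entire proof.

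So the substantive difference is this: you propose to finish the $n=7$ case, and the even-commutator cases at $n=6$, by passing to an element $h\in\Gamma_4$ with $h^2=t$ and running coset enumerations on $\langle x,h\rangle$. This is unnecessary and is the weak point of your write-up. The hypothesis $t\in\Delta$ is not used to supply extra relators in an enumeration; it is used only (as you yourself did for $n\le 5$) to place the involutions of the resulting $A_4$, $A_5$, $S_4$, or $L_2(7)$ into $\Delta$ and thereby invoke the standing prohibition. In particular, for $n=7$ there are no ``genuinely infinite $(2,3,7)$-quotients'' to worry about: once one bounds $|[x,t]|\le 7$ as well, the cited classification shows $\langle x,t\rangle\cong L_2(7)$ outright, and this group is forbidden because $t\in\Delta$ and all its involutions are conjugate. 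Your proposed enumeration over $\langle x,h\rangle$ is left unexecuted, would be considerably more delicate than what is actually needed, and there is no reason to expect it to terminate without importing yet more of the structure that \cite[Lemma~2.1]{m2020e} already packages. The clean fix is to cite that lemma (as the paper does throughout) rather than to introduce $h$.
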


\begin{proof}
Let $H=\langle t,x \rangle$. All possible groups $H^{\ast}$
generated by an involution and element of order 3 and such that $\omega(H^{\ast}) \subseteq \omega(A_{7})$ are listed in \cite[Lemma 2.1]{m2020e}. By Proposition \ref{s:arxiv}, $H$ has no subgroups isomorphic to $A_4$, so $(xt)^6=1$ and by \cite[Lemma 9]{mamontov2013e} the order of $[x,t]$ is odd.
\end{proof}
	 
In the next lemmas we prove $p=3$.

\begin{lemma}\label{l:noF42}
$G$ cannot have a subgroup $H$ isomorphic to $F_{42}$ or to $F_{294}$ with $\Gamma_2(H) \subseteq \Delta(G)$.
\end{lemma}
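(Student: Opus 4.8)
The plan is to assume such a subgroup $H \cong F_{294}$ (the case $H \cong F_{42}$ reduces to this since $F_{42} < F_{294}$, so it suffices to rule out both by building the larger one, or to argue directly that an $F_{42}$ would force, via the absence of $A_4$-subgroups with involutions in $\Delta$, a configuration extendable to $F_{294}$). Write $H = \langle x, t \rangle$ with the defining relators of $F_{294}$, so $b = z^x z$ generates the normal $7^2$ and $\langle x \rangle$ acts on it with no nonzero fixed points. The key structural observation I would exploit is that $F_{294} \cong 7^2 : 6$ contains elements of order $6$; if $v$ is such an element then $v^3 \in \Gamma_2(H) \subseteq \Delta$, so $v^3 = u^2$ for some $u \in \Gamma_4(G)$. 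Thus the involution $a := v^3$ is a square of an order-$4$ element, and by Lemma~\ref{l:shunk} its centralizer $C_G(a)$ is infinite. The idea is to locate, inside $N_G(a)$ or $C_G(a)$, enough of the $F_{294}$-structure together with the order-$4$ element $u$ to produce an explicit finitely presented group whose coset enumeration collapses or yields something with a forbidden order.

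Concretely, first I would show that $C_H(a) \cong 7 : 2$ (the centralizer of the central involution of a Frobenius complement $6$ inside $7^2:6$, i.e.\ a dihedral-type piece paired with a $7$-element from $b$), and that $a$ inverts a complement to $C_H(a)$ in the relevant subgroup, so that $H$ is generated by $a$ together with an element $x$ of order $3$ and an element $y$ of order $7$ in standard position. Then, using Lemma~\ref{2+3-A4} applied to the pair $(a, x)$ — both live in $G$, $a \in \Delta$, $x \in \Gamma_3$ — we get $(ax)^6 = [a,x]^p = 1$ with $p \in \{3,5,7\}$; inside $H \cong F_{294}$ one reads off which $p$ occurs. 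Combining this with the order-$4$ element $u$ satisfying $u^2 = a$ and the commutation relations that $u$ must satisfy with $x$ and $y$ (all commutators having orders drawn from $\omega(A_7) = \{1,\dots,7\}$, and no $A_4$ with involutions in $\Delta$ being allowed, which kills the possibility $(ux)^3$ or related relators of length $3$), I would assemble a presentation $\langle u, x, y \mid u^4, x^3, y^7, u^2 = a, \text{(action of $x$, $y$ as in $F_{294}$)}, (ux)^{i}, [u, \cdot]^{j}, \dots \rangle$ ranging the free parameters $i, j$ over the admissible spectrum values. Coset enumeration in {\sc Gap} should then show every such group is finite of order not divisible by the orders forced by $F_{294}$ (or is trivial / a $2$-group), contradicting the existence of $H$.

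The main obstacle I anticipate is pinning down \emph{which} element of order $4$ to use and proving it interacts rigidly enough with $H$: a priori $u$ with $u^2 = a$ need not normalize $H$, so I cannot simply work inside a finite overgroup of $H$ and must instead isolate a genuinely finite $(2,3,7)$- or $(2,3)$-generated subgroup $\langle u, x, y' \rangle$ (using finiteness of $(2,3)$-generated subgroups with spectrum in $\omega(A_7)$ from \cite[Lemma 2.1]{m2020e}, and then adjoining $y'$ carefully via Schmidt's theorem applied to a $7$-element) whose presentation is rigid enough to enumerate. A secondary subtlety is bookkeeping the normal sets $\Delta$ and the ``no $A_4$ with $\Gamma_2 \subseteq \Delta$'' hypothesis to guarantee that the short relators one would naively omit (those creating an $A_4$) are genuinely forbidden, so that the finite list of presentations to check stays small; but once the relator list is fixed this is a routine, if lengthy, {\sc Gap} computation of the kind already used repeatedly above.
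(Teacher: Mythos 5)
Your proposal does not close, for three concrete reasons. First, the reduction between the two groups goes the wrong way: since $F_{42}<F_{294}$, it suffices to exclude $F_{42}$ (an $F_{294}$ with involutions in $\Delta$ would contain such an $F_{42}$), whereas you propose to exclude $F_{294}$ and hope that every $F_{42}$ ``extends'' to an $F_{294}$ --- that extension is exactly what you cannot assume. Second, the structural claim you want to build on is false: in $F_{294}\simeq 7^2:6$ the cyclic complement of order $6$ must act on the kernel without nontrivial fixed points (otherwise $14$ or $21$ would lie in $\omega(G)$), so the centralizer in $H$ of the involution $a$ of the complement is the cyclic complement itself, of order $6$, not a group of shape $7:2$; there is no $7$-element of $H$ commuting with $a$. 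Third, and most seriously, the heart of your plan is to adjoin $u\in\Gamma_4$ with $u^2=a$ and enumerate $\langle u,x,y\rangle$; but the only relations available for $u$ are $u^4$, $u^2=a$ and spectrum bounds on a handful of words, which is far too weak to force a collapse, and you never identify where the contradiction would actually come from. You name this as ``the main obstacle'' yourself and then leave it unresolved, so the decisive step of the proof is missing.

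For comparison, the paper never touches the order-$4$ element at all. Working with $H=F_{42}=\langle x,z\rangle$, it isolates the involution $u=z^{x^2z}\in\Delta$ that centralizes the order-$3$ element $x$, and proves in succession: (1) $C_G(x)=\langle x,u\rangle$; (2) $C_{\Delta}(u)=\{u\}$; (3) every involution $t\in\Gamma_2\setminus\Delta$ centralizes $u$ --- this is the long case analysis where Lemma \ref{2+3-A4}, Proposition \ref{s:arxiv} and the standing hypothesis of the section (no $A_5$ or $L_2(7)$ with involutions in $\Delta$, used to kill the case $(tx)^7=1$) do the work. Since $z$ is conjugate to $u$, step (3) forces the order-$7$ element $uz$ to centralize such a $t$, giving $14\in\omega(G)$: the contradiction. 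All of the coset enumerations in that argument are anchored on the rigid pair $(u,x)$ inside $H$ together with a third generator whose relations to $H$ are pinned down \emph{before} enumerating; your sketch provides no analogous anchoring for $u$.
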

	
\begin{proof}
Since $F_{42}<F_{264}$, it is sufficient to consider the case where $H$ is isomorphic to $F_{42}$. Assume the contrary. 

Let $F_{42} \simeq H= \langle
x,z \mid \rho_{42} \rangle \leq G$, where $z\in \Delta(G)$, $b=z^xz$ and $$\rho_{42} =\big \{x^3,z^2,(xz)^6,b^7,b^xb^{-4} \big \}.$$ Then $u=z^{x^2z} \in \Delta(G)$ centralizes $x$ and $C_{\langle x,z \rangle}(u)=\langle u,x \rangle$. We break the proof into several steps.

\vspace{1mm}

{\bf (1)} $C_G(x)= \langle x,u \rangle.$

Assume that $y$ is an element of order $3$ centralizing $x$, and $y\not \in \langle x \rangle$. By \cite[Lemma 3.3]{m2020e} $y \in C_G(u)$.

If $[z,y]^7=1$, then $v=z^{[y^{-1},z][y,z]}$ is an involution centralizing $y$. By \cite[Lemma 3.1]{m2020e} $u=v$. Then $\langle z,x,y \rangle$ is a homomorphic image of $$G(i)=\big \langle z,x,y \mid \rho_{42} \cup \{y^3,u^yu,[x,y],(yz)^6,(z^yz)^7,uv,((zy)^2x)^i \} \big\rangle ,$$ where $i \in \{4,5,6,7\}$. Computations show that $xy$ is in the center of $G(6)$, which is not possible, and the orders of other groups $G(i)$ are not greater than $42=| \langle z,x \rangle|$, therefore $y \in \langle x \rangle$, which contradicts the choice of $y$.

If the order of $[z,y]$ is not $7$, then, by Lemma \ref{2+3-A4}, it divides $3$ or $5$. It follows that $\langle z,x,y \rangle$ is a homomorphic image of $$G(i_1,i_2)=\big \langle z,x,y \mid \rho_{42} \cup \sigma \cup \{(z^yz)^{i_1},((zy)^2x)^{i_2} \} \big \rangle$$ where 
$$ \sigma=\big \{y^3,u^yu,[x,y],(yz)^6,(z \cdot xy)^6,(z \cdot xy^{-1})^6 \big\},$$
$i_1 \in \{3,5\}$ and $i_2 \in \{4,5,6,7\}$. 

Computations show that the index $|G(i_1,i_2):\langle x,z \rangle|$ divides $3$. This index equals $3$ only for groups $G(3,6)$ and $G(5,6)$ which contain an element of order $3$ in the center. It follows that $\langle z,x,y \rangle = \langle x,z \rangle$ and this contradicts the choice of $y$. 

Let now $v$ be an involution in $C_G(x)$. Then $(uv)^6=1$. 

If $|uv|=3$, then by item (1) $uv=x^{\pm 1}$, and therefore $v=ux^{\pm 1}$ is an involution, which is a contradiction. So we may assume that a dihedral subgroup $\langle u,v \rangle \subseteq C_G(x)$ has no elements of order 3 and we may choose $v$ so that $[u,v]=1$. Then $v=u$ by \cite[Lemma 3.1]{m2020e}.

\vspace{1mm}

{\bf (2)} $C_{\Delta}(u)=\{u\}$.

Assume that there is an involution $t \in C_{\Delta}(u)$ such that $t \not = u$ and let $w=[x,t]$. Note that $\langle t,x \rangle \subseteq C_G(u)$, and therefore $w^3=1$ by Lemma \ref{2+3-A4}. By (1) we have $w \not = 1$. So $\langle x,w \rangle$ is either elementary abelian or isomorphic to the extraspecial group $3^{1+2}$ of exponent 3 and order $3^3$. In any case either there is an element $y\in \Gamma_3 \backslash \langle x \rangle$ centralizing $x$, which is a contradiction by item (1), or $(tx)^2=1$. In the last case $\langle t,z,x \rangle$ is a homomorphic image of a group $$G(i_1,i_2)=\big \langle z,x,t \mid \rho_{42} \cup \{ t^2,(ut)^2,(tx)^2,(t^zx)^6,(z^tx)^6,(tz)^{i_1},(tzx)^{i_2} \} \big \rangle,$$ where $i_1,i_2 \in \{4,5,6,7\}$. Computations show that $G(7,5) \simeq 
G(7,7) \simeq S_7$, $G(6,7) \simeq L_3(2):2$, which is not possible and the order of $G(i_1,i_2)$ divides $12$ for other values of the parameters. So we have a contradiction.

\vspace{1mm} 

{\bf (3)} If $t \in \Gamma_2 \backslash \Delta$, then $[u,t]=1$.

We consider the various possibilities for $\langle t,x \rangle$ using \cite[Lemma 2.1]{m2020e}. 

Assume $(tx)^2=1$. Note that $t$ and $u$ are not conjugated, therefore $|ut|$ is even. If $|ut|=4$ then $(ut)^2 \in C_{\Delta}(u)=\{u\}$, which is not possible. It follows that $(ut)^6=1$ and $(u \cdot tx)^6=1$ ($t$ and $tx$ are conjugated in $\langle t,x \rangle \simeq S_3$). Therefore $\langle u,t,x \rangle$ is a homomorphic image of $$\big \langle u,t,x \; \big | \; \{u^2,t^2,x^3,[u,x],(tx)^2,(ut)^6,(utx)^6 \} \big \rangle \simeq S_3 \times S_3.$$ From item (1) it follows that $(ut)^2=1$.

{\bf (3.1)} Assume $(tx)^3=1$. Then $\langle t,x \rangle \simeq A_4$. By \cite[Lemma 4.2]{m2020e} either $G$ contains a subgroup isomorphic to $S_5$, or $O_2(\langle t,x \rangle) \subseteq O_2(G)$. In other words, we may assume that $(ut)^4=(ut^x)^4=1$. 

If $|ut|=4$ then $(ut)^2 \in C_{\Delta}(u)=\{u\}$, which is a contradiction. Therefore $(ut)^2=1$.

If $(tx)^4=1$, then $\langle t,x \rangle \simeq S_4$. So $(tx)^2 \in \Delta$ and $\langle (tx)^2,x \rangle \simeq A_4$, a contradiction with Statement \ref{s:arxiv}.

{\bf (3.2)} Assume $(tx)^5=1$, then $\langle t,x \rangle \simeq A_5$. 

We may identify $t=(1,2)(3,4)$ and $x=(1,3,5)$. Take $v=(1,3)(2,4)$ and $w=(1,3)(2,5)$. Then $\langle x,v \rangle \simeq S_3$ and $\langle w,x \rangle \simeq A_4$. It is already proven that in these cases $v$, $w$ and $w^x$ centralize $u$. Note that $t \in \langle v,w,w^x \rangle \leq C_G(u)$, and in this case we are done.

{\bf (3.3)} Assume $(tx)^7=1$. 

We have $\langle t,x \rangle \simeq L_{2}(7)$, against the hypothesis that $G$ does not possess a finite nonabelian simple subgroup $H$ with $\Gamma_{2}(H) \subseteq \Delta$.

{\bf (3.4)} $tx \in \Gamma_6$.

If $[x,t]$ is of order $5$ or $7$ then $C_{\langle x,t \rangle}(x)$ contains an involution, which is conjugated with $t$. By item (1) it should coincide with $u$ and hence $t \in \Delta$, a contradiction.

If $[x,t] \in \Gamma_4$, then $v=[x,t]^2 \in \Delta$ and $\langle v,x \rangle \simeq A_4$, which is a contradiction by Statement \ref{s:arxiv}.

Therefore $[x,t]^6=1$. If $y=[x,t]^2$, then $\langle x,y \rangle$ is a homomorphic image of an extraspecial group $3^{1+2}$. By item (1) we get from here that $y=1$. So $\langle x,y \rangle$ is a homomorphic image of $K=\langle x,t \mid \{t^2,x^3,(xt)^6,[t,x]^2 \} \rangle \simeq C_2 \times A_4$. If $Z(\langle x,t \rangle)$ contains an involution, then, by item (1), it should coincide with $u$, and so $[t,u]=1$ as claimed. The other possibility $\langle x,t \rangle \simeq A_4$ was already considered.

{\bf (4)} Lemma follows from item (3).

Note that $u$ and $z$ are conjugated. So $u,z \in C_G(t)$ by (3). It follows that the element $uz$ of order $7$ centralizes $t$, a contradiction. 
\end{proof}

\begin{lemma}\label{l:noD10}
If $a \in \Delta$ inverts an element of order $5$, then $\Gamma_{2}(C_G(a))=\{a\}$.
\end{lemma}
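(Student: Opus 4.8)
Argue by contradiction: suppose $t\in\Gamma_2(C_G(a))$ with $t\neq a$, and fix $g\in\Gamma_5(G)$ with $g^a=g^{-1}$, i.e. $(ag)^2=1$. Set $b=at$, the third involution of the Klein four--group $\langle a,t\rangle$. From $(g^t)^a=g^{ta}=g^{at}=(g^{-1})^t$ we see that $a$ inverts $g^t$ as well, so the $\langle a,t\rangle$--orbit of $g$ is $\{g,g^{-1},g^t,g^{-t}\}$. If this orbit has fewer than four elements then either $t$ centralizes $g$, or $t$ inverts $g$ and then $b=at$ centralizes $g$ (note $b\neq 1$ since $t\neq a$); in either case a commuting pair consisting of an involution and an element of order $5$ yields an element of order $10\notin\omega(G)$, a contradiction. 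Hence $g^t\notin\langle g\rangle$ from now on.

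\textbf{Main step.} Put $K=\langle g,g^t\rangle$. One checks at once that $K\trianglelefteq\langle a,t,g\rangle$, that $a$ inverts the generating pair $g,g^t$ of $K$, and that $t$ interchanges them. I would then split on the structure of $K$. \emph{(i)} If $K$ is abelian then, being $2$--generated of exponent $5$ and non--cyclic, $K\cong 5^{2}$; the element $w=g(g^t)^{-1}\in K$ is nontrivial (as $g\neq g^t$), and a short computation using commutativity gives $w^{b}=g^{-t}g=g\,g^{-t}=w$, so $b$ and $w$ commute and $bw$ has order $10$ --- impossible. \emph{(ii)} If $K$ is a nonabelian finite $5$--group (necessarily of exponent $5$), then by bilinearity of the commutator map $a$ acts trivially on $[K,K]$ modulo $[[K,K],K]$; since $[K,K]\neq 1$ this forces $C_{[K,K]}(a)\neq 1$ by coprimality, and again we obtain a commuting involution and element of order $5$, a contradiction. \emph{(iii)} If $K$ has a nonabelian composition factor, then the classification of finite simple groups with spectrum contained in $\{1,\dots,7\}$, together with the fact that $K$ is generated by elements of order $5$ (which already excludes $L_2(7)$, having no such elements), forces $K\in\{A_5,A_6,A_7\}$, all other candidates possessing elements of order outside $\{1,\dots,7\}$.

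\textbf{Ruling out $K\in\{A_5,A_6,A_7\}$.} Here I invoke the standing hypothesis of this paragraph: $G$ has no finite nonabelian simple subgroup all of whose involutions lie in $\Delta$. For $K=A_6$ or $A_7$ this is immediate: these groups contain elements of order $4$, all their involutions are conjugate and each is a square of an element of order $4$, so $\Gamma_2(K)\subseteq\Delta(K)\subseteq\Delta(G)$, and Proposition~\ref{s:arxiv} (applied to an $A_4\leq K$ with involutions in $\Delta$, using $O_2(G)=1$) gives the contradiction. For $K=A_5$ one first notes that an involution of $\Aut(A_5)=S_5$ inverting a $5$--cycle is necessarily inner, the inverting involutions lying in the $D_{10}\leq 5{:}4$ that normalizes $\langle g\rangle$ and these being even; since $a$ does not centralize $K$, the automorphism of $K$ induced by $a$ is conjugation by an involution $z\in K$ that inverts $g$, which (via the dihedral group $\langle a,g\rangle$ and normality of $\Delta$ in $G$) one identifies as lying in $\Delta(G)$; by $A_5$--conjugacy of involutions, $\Gamma_2(K)\subseteq\Delta(G)$, and Proposition~\ref{s:arxiv} again yields a contradiction. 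In the writeup this whole case analysis can be run concretely, in the style of Lemma~\ref{l:noF42}: from the relations $a^2=t^2=g^5=(ag)^2=[a,t]=1$ together with the finitely many possible values of $|gt|$ and $|g^at\,t|$ one presents $\langle a,g,t\rangle$ and disposes of each case by coset enumeration.

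\textbf{Main obstacle.} The delicate case is $K=A_5$: unlike $A_6$ and $A_7$ it has no element of order $4$, so membership of its involutions in $\Delta$ is not automatic and must be extracted from how $a$ and $\langle a,g\rangle\cong D_{10}$ sit inside $\langle K,a\rangle\leq S_5$. A secondary technical point is establishing finiteness (or local finiteness) of $K=\langle g,g^t\rangle$ before applying the simple--group classification; this should follow from $K$ being normalized by $\langle a,t\rangle$ with $a$ inverting a generating pair, or be avoided altogether by the coset--enumeration route indicated above.
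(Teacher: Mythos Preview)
The paper's own proof is a one--line citation: it invokes \cite[Statement~2]{m2020e}, which already establishes that if $a\in\Delta$ inverts an element of order~$5$ and $C_G(a)$ contains a second involution, then $G$ contains a subgroup isomorphic to $A_6$, $L_3(4)$ or $S_5$; any of these violates the standing hypothesis of the paragraph. So the actual work is outsourced, and the coset--enumeration route you sketch at the end (presenting $\langle a,g,t\rangle$ from the relations $a^2=t^2=g^5=(ag)^2=[a,t]=1$ together with the finitely many possible orders of $gt$, $g^at$, etc.) is essentially what lies behind that citation.

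Your structural argument on $K=\langle g,g^t\rangle$, however, has a genuine gap. The trichotomy you set up --- (i) abelian, (ii) nonabelian finite $5$--group, (iii) possessing a nonabelian composition factor --- is not exhaustive even among finite groups: a finite \emph{soluble} group generated by two elements of order~$5$ need not be a $5$--group (think of $2^4{:}5$ with an irreducible action, which is generated by two conjugate elements of order~$5$ and sits comfortably inside $\omega\subseteq\{1,\dots,7\}$), so case~(ii) does not absorb all soluble possibilities, and case~(iii) does not pick them up either. More seriously, you never establish that $K$ is finite at all. The observation that $a$ inverts a generating pair does not force finiteness: inverting generators is far weaker than inverting every element, and there is no off--the--shelf theorem that turns this into local finiteness of $K$. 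You flag this yourself as a ``secondary technical point'', but it is in fact the crux --- without it the appeal to composition factors and the classification is unjustified.

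Your treatment of the individual cases is otherwise sound: case~(i) is clean, case~(ii) is correct once one has a finite $5$--group, and the $A_5$ case can be completed along the lines you indicate (one checks that $a$ must in fact lie in $K$, since any nontrivial element of $C_G(K)$ would commute with an element of order~$5$). But to make the argument stand on its own you would have to either prove finiteness of $K$ directly --- which is not obvious --- or abandon the structural route and carry out the coset enumeration in full, which is exactly what the cited result does.
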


\begin{proof}
From \cite[Statement 2]{m2020e} it follows that if $a \in \Delta$ inverts an element of order $5$ and $C_G(a)$ contains an involution $t \not = a$, then $G$ contains a subgroup isomorphic to $A_6, L_3(4)$ or $S_5$, which is contrary to hypotheses. 
\end{proof} 

\begin{lemma}\label{l:333} Let $a\in \Delta$ and $b,c \in \Gamma_{2}$ with $(ab)^3=(bc)^3=1$. Then $(ac)^3=1$.
\end{lemma}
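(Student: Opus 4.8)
The plan is to realize $\langle a,b,c\rangle$ as a homomorphic image of a finitely presented group whose structure can be determined by coset enumeration, mirroring the pattern used in Lemmas \ref{l:comm} and \ref{l:4inv}. First I would record the relations that hold automatically. Since $a\in\Delta$, both $b$ and $c$ lie in $\Gamma_2$, and $(ab)^3=(bc)^3=1$, the conjugates $b^a$ and $c^b$ are again involutions; moreover $ab\cdot a=(ab)^{-1}$ gives $b^a=aba$, and similarly $c^b=bcb$. I would then pin down the orders of the "mixed" products $ac$, $a^bc$, $a\cdot(bc)$, etc., using the results already available: by Lemma \ref{2+3-A4} an element of order $3$ times an involution $a\in\Delta$ has order dividing $6$ whenever the involution is in $\Gamma_3$-type configuration, but here the cleaner route is to note that $\langle a, bc\rangle$ and $\langle a, b^a c\rangle$ are generated by an involution from $\Delta$ and an element of order (dividing) $3$, and by Proposition \ref{s:arxiv} together with Lemma \ref{2+3-A4} such a subgroup has no $A_4$ section with $\Delta$-involutions, forcing these products into $\Gamma_6$ (or collapsing). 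This yields a relator set $\sigma$ of the form $\{a^2,b^2,c^2,(ab)^3,(bc)^3,(ac)^6,(a^bc)^6,(ab\cdot c)^6,\dots\}$ exactly as in Lemma \ref{l:comm}.

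Next I would form the finitely presented group $K(i)=\langle a,b,c \mid \sigma \cup \{(ac)^i\}\rangle$ for $i\in\{4,5,6,7\}$ — or more economically, since we want to \emph{exclude} every order of $ac$ except $3$, I would run the enumeration on $\langle a,b,c\mid \sigma\rangle$ together with a parameter killing one of the remaining possibilities at a time. The computations should show that each $K(i)$ with $i\neq 3$ either has order dividing $4$ (so $\langle a,b,c\rangle$ is a $2$-group, contradicting $(ab)^3=1$ with $ab\neq 1$ — note $ab=1$ is impossible since $a\in\Delta$, $b\in\Gamma_2$ are non-conjugate), or is isomorphic to a known finite group such as $A_5$, $S_5$, $L_3(2)$, $L_3(2):2$, $A_6$, or $S_7$, each of which contains a finite nonabelian simple subgroup $H$ with $\Gamma_2(H)\subseteq\Delta$ (one must check the involutions land in $\Delta$, which follows from the way $a,b,c$ were chosen and Lemmas \ref{l:noF42}, \ref{l:noD10} ruling out the bad embeddings), contradicting the standing hypothesis of the paragraph. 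The case $i=6$ may require the extra trick from Lemma \ref{l:4inv}: identifying an element of order $3$ in the image that centralizes $a$, hence equals $x^{\pm1}$ for the relevant $x\in C_G(a)$, and pushing this to a contradiction. Once all $i\neq 3$ are eliminated, $(ac)^3=1$.

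The main obstacle I anticipate is twofold. First, justifying the "automatic" relators in $\sigma$: one must argue that each mixed product like $a^b c$ genuinely has order dividing $6$, which requires knowing that $a^b\in\Delta$ (clear, as $\Delta$ is normal) and then invoking Proposition \ref{s:arxiv} / Lemma \ref{2+3-A4} — but these apply to an involution and an element of order exactly $3$, so one must first handle the degenerate possibility that $bc$ (or its relevant power) has order $1$, i.e. $b=c$, in which case the conclusion is trivial. Second, and more delicate, is verifying that whenever a coset enumeration produces a nonabelian finite group, its $\Delta$-involutions really do generate a forbidden simple subgroup: this is where one leans on the already-established Lemmas \ref{l:noF42} and \ref{l:noD10} to exclude the $F_{42}$-type and $D_{10}$-type obstructions, and on the fact that $a\in\Delta$ by hypothesis while $b,c$ are arbitrary involutions subject only to the order relations. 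Carrying this bookkeeping through all parameter values $i\in\{4,5,6,7\}$ — and possibly refining $\sigma$ with a second parameter as in $T(p,i,j)$ of Lemma \ref{l:2inv} if a single relator does not collapse the group — is the bulk of the work, but it is routine given the machinery already in place.
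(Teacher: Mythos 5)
Your overall strategy --- coset enumeration on a parametrized presentation of $\langle a,b,c\rangle$ --- is indeed the paper's, but the relator set you propose to feed into it is not justified, and the case that carries all the difficulty is left unresolved. The relators $(ac)^6$ and $(a^bc)^6$ are products of two involutions, and in this paragraph there is no statement bounding the order of such a product by $6$: Lemma \ref{2+3-A4} applies only to a pair consisting of an element of $\Gamma_3$ and an involution of $\Delta$. The order of $ac$ is precisely the unknown of the lemma, so writing $(ac)^6$ into $\sigma$ begs the question (and is incompatible with then adjoining $(ac)^i$ for $i\in\{5,7\}$, which would force $ac=1$). The relators one can legitimately extract from Lemma \ref{2+3-A4} are those pairing an order-$3$ element with a $\Delta$-involution, namely $(ab\cdot c)^6=(a\cdot bc)^6$ (here $bc\in\Gamma_3$ and $a\in\Delta$), together with $(ab\cdot a^c)^6$ and $(ab\cdot a^{cac})^6$ (here $ab\in\Gamma_3$ and $a^c,a^{cac}\in\Delta$, the set $\Delta$ being normal); this is exactly the set the paper uses.

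With the correct $\sigma$ the enumeration gives $|G(i)|$ dividing $6$ for $i\in\{4,5,7\}$, so no forbidden simple subgroups ever appear --- the groups simply collapse --- and your anticipated mechanism (a forbidden $A_5$, $S_5$, $L_3(2)$, \dots{} with involutions in $\Delta$) is not how the contradiction arises; it could not be, since for arbitrary $b,c\in\Gamma_2$ you have no way to certify that the involutions of such a putative subgroup lie in $\Delta$. The whole difficulty is $i=6$, which you defer to ``an extra trick'': in fact $G(6)$ is a finite soluble $\{2,3\}$-group, and the paper successively factors by the normal closures of $(bu)^4$ with $u=(ac)^3$ (because $bu$ has order $8$ in $G(6)$ while $8\notin\omega(G)$), of $(a^{cb}\cdot c^a)^6$ (its order in $G(6)$ is $36$), and of $(ab\cdot c^{ac})^2$ (an $S_4$ being excluded via Lemma \ref{2+3-A4} and Proposition \ref{s:arxiv}), arriving at $3^{1+2}:2$, in which any product of two involutions has order $1$ or $3$; this last step, not a simple-subgroup contradiction, is what yields $(ac)^3=1$. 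A further small slip: $ab=1$ is not ``impossible'' on the grounds that $a\in\Delta$ and $b\in\Gamma_2$ are non-conjugate --- $\Delta\subseteq\Gamma_2$ and nothing prevents $b=a$; the degenerate cases are simply trivial and need no such argument.
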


\begin{proof}
Using Lemma \ref{2+3-A4} we obtain that $\langle a,b,c \rangle$ is a homomorphic image of 
$$G(i) =\big \langle a,b,c \; \big | \; \{ a^2,b^2,c^2,(ab)^3,(bc)^3,(ab \cdot c)^6,(ab \cdot a^c)^6,
(ab \cdot a^{cac})^6, (ac)^i \} \big \rangle,$$ where $i \in \{4,5,6,7\}$. Computations show that $|G(i)|$ divides $6$ if $i \not = 6$. 

Let $u=(ac)^3$. In $G(6)$ the order of $bu$ is $8$ and the order of $a^{cb}\cdot c^a$ is $36$. So we consider $\overline{G}=G(6) / \langle (bu)^4, (a^{cb}\cdot c^a)^6 \rangle$. In $\overline{G}$ we have $\langle ab, c^{ac} \rangle \simeq S_4$ which by Lemma \ref{2+3-A4} implies $(ab \cdot c^{ac})^2=1$. So we get to a factor $$\overline{G} / \langle (abc^{ac})^2 \rangle^{\overline{G}} \simeq 3^{1+2}:2,$$ in which the product of two involutions has order 1 or 3 and this proves the lemma.
\end{proof}

We now prove an analogue of Baer-Suzuki theorem for $p=3$.

\begin{lemma}\label{l:bs3}
Assume that $y \in \Gamma_3$ and any two elements from $y^G$ generate a $3$-group. Then $H=\langle y^G \rangle$ is a $3$-group.
\end{lemma}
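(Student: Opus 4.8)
The statement is an analogue of the Baer--Suzuki theorem: if $y\in\Gamma_3$ and any two conjugates of $y$ generate a $3$-group, then $H=\langle y^G\rangle$ is a $3$-group. The strategy I would follow is the classical one, but adapted to the locally-finite-group setting available here. The first reduction is to show that $H$ is locally finite: any finite subset of $y^G$ generates a finite group, since finitely many conjugates of $y$ pairwise generate $3$-groups and, by the hypothesis $\omega(G)\subseteq\omega(A_7)$ (so exponent is bounded and in particular $\{15,21\}\cap\omega(G)=\emptyset$), any $3$-generated subgroup built from elements of order $3$ and order dividing $7$ or $5$ stays finite; more carefully, I would invoke the earlier machinery (Schmidt's theorem together with the finiteness results cited for $(2,3)$- and $(3,3)$-generated subgroups in \cite{m2020e}) to see that $\langle y_1,\dots,y_n\rangle$ is finite for $y_i\in y^G$. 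Hence $H$ is locally finite, and it suffices to prove every finite subgroup of $H$ lies in a $3$-subgroup, i.e. to prove the statement for finite $H$.

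For finite $H$, the classical Baer--Suzuki theorem applies directly: if every pair of conjugates of $y$ generates a $3$-group, then $y\in O_3(H)$, hence $y^G\subseteq O_3(H)$ since $O_3(H)\trianglelefteq H$ and $y^G$ is a conjugacy class contained in $H$ that is normal under $H$-conjugation, giving $H=\langle y^G\rangle\le O_3(H)$, so $H=O_3(H)$ is a $3$-group. To make this rigorous across the whole (infinite, locally finite) $H$ I would argue by a direct limit / local finiteness argument: for any finite $H_0=\langle y_1,\dots,y_n\rangle$ with $y_i\in y^G$, each $y_i$ is conjugate in $G$ (not necessarily in $H_0$) to $y$, but we can enlarge $H_0$ to a finite $G$-subgroup in which all the relevant conjugating elements lie, apply Baer--Suzuki there, and conclude $y_i\in O_3$ of that larger finite group; intersecting appropriately and using that a locally finite group all of whose finite subgroups are $3$-groups is itself a $3$-group finishes the argument.

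The main obstacle is the passage from ``pairwise generate a $3$-group'' (a condition on conjugates in $G$) to a statement inside a single finite subgroup to which Baer--Suzuki can be applied: one must be careful that the conjugating elements need not lie in the subgroup generated by the $y_i$, so the bookkeeping of which finite $G$-subgroup to work in has to be set up correctly. Once local finiteness of $H$ is established and one works inside a suitable finite overgroup, Baer--Suzuki does the rest, and the exponent restriction coming from $\omega(G)=\omega(A_7)$ is exactly what guarantees the finiteness needed to start. I expect the write-up to be short: establish local finiteness of $H$, reduce to the finite case, quote Baer--Suzuki, and conclude $O_3(H)=H$.
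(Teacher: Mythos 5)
There is a genuine gap, and it sits exactly at the step you describe as the ``first reduction'': establishing that $H=\langle y^G\rangle$ is locally finite. The hypothesis only controls subgroups generated by \emph{two} conjugates of $y$; it gives no a priori information about $\langle y_1,\dots,y_n\rangle$ for $n\ge 3$, which could in principle contain elements of order $2,4,5,6$ or $7$ and need not be finite. Your appeal to ``the exponent is bounded'' does not help, since bounded exponent does not imply local finiteness (this is the Burnside problem, and the paper explicitly works with a non-locally-finite counterexample $G$). Worse, the second half of your plan requires enlarging a finite subgroup $H_0$ to a finite subgroup of $G$ containing the elements conjugating $y$ to the $y_i$; but $G$ itself is \emph{not} locally finite (Lemma~\ref{l:nonlf}), so the subgroup generated by $H_0$ and those conjugators has no reason to be finite, and the classical Baer--Suzuki theorem cannot be applied anywhere. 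In short, proving local finiteness of $H$ (or of a suitable overgroup) is essentially equivalent to the lemma itself, not a preliminary reduction.

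The paper avoids this entirely by an induction on the length of a product $h=y_1\cdots y_n$ of conjugates: assuming $x=y_1\cdots y_{n-1}$ has order dividing $3$, the hypothesis applied to the pairs $(y^x,y)$ and $(y^{xyx},y)$ (these are pairs of $G$-conjugates of $y$, so each generates a $3$-group, of exponent $3$ because $9\notin\omega(G)$) yields the relations $(y^xy)^3=(y^xy^{-1})^3=[y^x,y]^3=(y^{xyx}y)^3=1$; a coset enumeration then shows that the abstract group on $x,y$ with these relations plus $x^3=y^3=1$ has order $3^9$, so $xy$ is again a $3$-element. Hence every element of $H$ has order dividing $3$ and $H$ is a $3$-group. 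If you want to keep a Baer--Suzuki-flavoured argument, you would need to first prove, by some such direct computation, that arbitrary finite products of conjugates of $y$ are $3$-elements --- at which point the finite Baer--Suzuki theorem is no longer needed.
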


\begin{proof}
Let $h=y_1\ldots y_n$ be an arbitrary element of $H$ where $y_i \in y^G$. Denote $x= y_1\ldots y_{n-1}$ and $y=y_n$. Using induction on $n$, it is sufficient to prove that if the order of $x$ is $3$, then the order of $xy$ is $3$. By assumption the following relations hold: $\rho = \{ (y^xy)^3, (y^xy^{-1})^3, [y^x,y]^3, (y^{xyx}y)^3  \}$. Therefore $\langle x,y \rangle $ is a homomorphic image of $$K=\big \langle x,y \; \big | \; \rho \cup \{x^3,y^3\} \big\rangle$$ and computations show that $K$ is a finite group of order $3^9$.
\end{proof}

\begin{lemma}\label{l:D10}
If $a \in \Delta$, then $a$ inverts no elements of order $5$.
\end{lemma}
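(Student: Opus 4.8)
The plan is to argue by contradiction: suppose $a \in \Delta$ inverts an element $d$ of order $5$, so that $\langle a, d \rangle \simeq D_{10}$. By Lemma~\ref{l:noD10} we already know $\Gamma_2(C_G(a)) = \{a\}$, i.e. $a$ is the unique involution in its own centralizer. The strategy is to exploit this rigidity together with the $3$-local structure: since $a$ inverts $d$ of order $5$, one can look at how $a$ interacts with elements of order $3$ in $G$, invoke Lemma~\ref{2+3-A4} (so that $(xa)^6 = [x,a]^p = 1$ with $p \in \{3,5,7\}$ for $x \in \Gamma_3$), and then use the already-established absence of subgroups isomorphic to $F_{42}$, $F_{294}$ (Lemma~\ref{l:noF42}), $A_5$, $L_2(7)$, $A_6$, $L_3(4)$, $S_5$ (all with involutions in $\Delta$) to kill the cases $p = 5$ and $p = 7$. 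So effectively every element of order $3$ either commutes with $a$ or generates with it a copy of $A_4$ whose $2$-part must (by Proposition~\ref{s:arxiv}, since we are assuming no finite simple subgroup of the right type) land in $O_2(G) = 1$ — forcing $(xa)^3 = 1$, i.e. $\langle x, a\rangle \simeq A_4$ is impossible unless... — wait, more carefully: Proposition~\ref{s:o2} gives $O_2(G) = 1$, so if $\langle a, x\rangle \simeq A_4$ then its Klein four-group would have to be inside $O_2(G) = 1$ or else yield $A_5$ or $L_2(7)$ with involutions in $\Delta$, contradiction. Hence for every $x \in \Gamma_3$ we must have $[x,a] = 1$ or $|xa| = 6$ with $\langle x, a\rangle \simeq S_3$ (the $A_4$ case being excluded).

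Next I would show that the set $y^G$ of conjugates of any fixed $y \in \Gamma_3$ behaves tamely relative to $a$, and then apply the Baer–Suzuki analogue, Lemma~\ref{l:bs3}. Concretely: take $d$ of order $5$ inverted by $a$, and let $y \in \Gamma_3$ be chosen inside $C_G(d)$ or inside a suitable finite locally-generated subgroup; the point is to arrange, via Lemmas~\ref{l:333} and \ref{l:333}'s $3^{1+2}:2$ computations, that any two $G$-conjugates of $y$ generate a $3$-group. One then concludes from Lemma~\ref{l:bs3} that $\langle y^G\rangle$ is a normal $3$-subgroup of $G$. But $O_{3}(G) \ne 1$ is incompatible with $5, 7 \in \omega(G)$ in the usual way: an element of order $5$ (or $7$) acting on a nontrivial locally finite nilpotent (Higman/Hall–Higman-style) $3$-group either centralizes something, producing an element of order $15$ or $21 \notin \omega(A_7)$, or acts fixed-point-freely, forcing the order-$5$ (resp. order-$7$) element to normalize — hence by \cite{zh2000e} to centralize — giving the same contradiction. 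Alternatively, once $O_3(G) \neq 1$ one repeats the reduction of paragraph~4 with the prime $3$ in place of $2$.

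The main obstacle, as in all the preceding lemmas of this paragraph, is verifying that "any two conjugates of $y$ generate a $3$-group." The honest difficulty is combinatorial: given the $D_{10}$ configuration $\langle a, d\rangle$, one must produce the relator set $\rho = \{(y^x y)^3, (y^x y^{-1})^3, [y^x, y]^3, (y^{xyx} y)^3\}$ of Lemma~\ref{l:bs3} for the relevant pairs. This will require a case analysis on $|y^x y|$ using Lemma~\ref{2+3-A4} (so $|y^x y| \in \{1,2,3,6,7\}$ if $y^x y$ has odd part controlled), excluding the large cases by the no-simple-subgroup hypothesis exactly as in Lemma~\ref{l:noF42}, and then a {\sc Gap} coset-enumeration check that the finitely presented groups
$$
G(i,j) = \big\langle a, d, y \,\big|\, a^2, d^5, d^a d, y^3, \ldots, (\text{ad hoc relators})^i, (\cdots)^j \big\rangle
$$
all collapse (have order dividing a power of $3$, or contain a central element of order $3$, $5$, or $7$ that lands in the kernel) for $i, j$ ranging over $\{4,5,6,7\}$. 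I expect the bookkeeping — tracking which involutions are in $\Delta$ and which products must be inverted by $a$ — to be where the real work lies, while the final contradiction from $O_3(G) \ne 1$ is routine given the earlier paragraphs.
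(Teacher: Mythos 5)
Your overall scaffolding (argue by contradiction, invoke Lemma~\ref{l:noD10}, control $\langle a,x\rangle$ for $x\in\Gamma_3$, apply the Baer--Suzuki analogue Lemma~\ref{l:bs3}, reduce modulo $O_3(G)$) does match the paper's, but two of your key steps fail. First, you cannot ``kill the case $p=5$'': if $[a,x]$ has order $5$ then $\langle a,x\rangle$ is a quotient of $F_{150}\simeq 5^2{:}6$, which is soluble and contains none of the forbidden subgroups $A_5$, $L_2(7)$, $A_6$, $S_5$, $L_3(4)$, $F_{42}$, $F_{294}$ --- indeed it is exactly the configuration created by the hypothesis that $a\in\Delta$ inverts an element of order $5$. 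So your conclusion that every $x\in\Gamma_3$ either commutes with $a$ or generates $S_3$ with it is false, and the $F_{150}$ configuration is precisely what occupies the longest stretch of the paper's argument (producing $u=a^{xa(x^{-1}a)^2}\in C_G(x)$, using Lemma~\ref{l:shunk} to get an infinite $O_3(C_K(u))$, and running coset enumerations that force $\langle x\rangle=\langle y\rangle$, a contradiction).

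Second, and more seriously, $O_3(G)\neq 1$ is \emph{not} ``incompatible with $5,7\in\omega(G)$ in the usual way.'' An element of order $5$ (or $7$) can act fixed-point-freely on a nontrivial elementary abelian $3$-group (e.g.\ on $3^4$, resp.\ $3^6$, via the multiplicative action of a field extension); fixed-point-freeness only forces nilpotency of the normal subgroup, which a $3$-group of exponent $3$ already has, so no element of order $15$ or $21$ appears, and \cite{zh2000e} applies only to regular automorphisms of order $3$. There is likewise no ``paragraph~4 with $3$ in place of $2$'': that reduction rests on the minimality of the exponent of $O_2(G)$ in the choice of counterexample and on $2$-group-specific structure. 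The paper instead passes to $\overline{G}=G/O_3(G)$ and must separately dispose of $\mu(\overline{G})=\{3,4,5,7\}$ (recognition of $L_3(4)$), $\mu(\overline{G})=\{4,5,7\}$ (finiteness of $C_{\overline{G}}(\overline{a})$ plus Shunkov's theorem), and the case $\mu(\overline{G})=\mu(G)$ --- the last being the hard one, organized around the graph on $\Delta$ with edges $\{(a,b)\mid ab\in\Gamma_3\}$, whose components are complete by Lemma~\ref{l:333}; this graph, which is the device that actually delivers the hypotheses of Lemma~\ref{l:bs3}, never appears in your sketch. Your proposal stops exactly where the real work begins.
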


\begin{proof} Assume the contrary. By Lemma \ref{l:noD10} $C_G(a)$, contains the unique involution $a$, hence for every $b \in \Gamma_2$ the order of $ab$ is odd and $\Delta=\Gamma_2=a^G$.

Consider a graph $\Gamma=\big (\Delta,E \big)$ with vertices $\Delta$ and edges $E=\{(a,b) \mid a,b \in \Delta, ab \in \Gamma_{3} \}$. Let $\Delta_b$ be its connected component, passing through a vertex $b \in \Delta$, then, by Lemma $\ref{l:333}$, $\Delta_b$ is a complete graph.

Assume that $\Delta_a$ has a vertex $b \not =a$. Let $x=ab$. Note that $C_G(x)$ is a $3$-subgroup. Indeed, $K = \langle a,b,c \mid \{a^2,b^2,c^2,[c,ab],(ab)^r,(ac)^s \} \rangle \simeq D_{2m}$, where $m=\mathrm{GCD}(r,s)$ and $a=b$ in $K$.

Let $c \in \Delta$ and $d \in \Delta_c$. By assumptions $(dx)^6=1$ and $(dd^x)^p=1$. By \cite[Lemma 2.1]{m2020e} if $p \in \{5,7\}$, then $C_G(x)$ contains an involution, which is not possible. Therefore $p=3$ and $d^x \in \Delta_c$. So $x$ normalizes each connected component $\Delta_b$ of the graph $\Gamma$.

Let $y \in x^G$. Then $y$ normalizes $\Delta_a$. Therefore, $\langle x,x^y \rangle$ is a $3$-subgroup. Computations show that $\langle x,y \mid \{ x^3,y^3,(xx^y)^3,[x,y]^3,(xy)^i \}\rangle$ is a $3$-group for $i \in \{4,5,6,7\}$. Hence $\langle x,y \rangle$ is a $3$-subgroup. By Lemma \ref{l:bs3} $x \in O_3(G)$.

Our proof now uses only the fact that the product of any two involutions is odd, which can be written as an identity, and therefore it is a property that is preserved in homomorphic images of $G$.

Consider $\overline{G}=G / O_3(G)$. If $\mu(\overline{G}) = \{3,4,5,7\}$, then $\overline{G} \simeq L_3(4)$ by \cite{l34e}, and $G$ is locally finite, a contradiction with Lemma \ref{l:nonlf}.  

Assume $\mu(\overline{G}) = \{4,5,7\}$. Then $C_{\overline{G}}(\overline{a})$ is a $2$-group of exponent $4$, containing the unique involtuion. Every infinite locally finite group contains an infinite abelian subgroup \cite{hallkulatilatika,kargapolove}. It follows that
$C_{\overline{G}}(\overline{a})$ is finite. By Shunkov's theorem \cite{shu1972e} $\overline{G}$ is locally finite. By Schmidt's theorem, $G$ is locally finite, a contradiction with Lemma \ref{l:nonlf}.  

Hence $\mu(\overline{G})=\mu(G)$ and we may assume that $\Gamma$ is an empty graph. Let $q \in \Gamma_3$ such that $[q,a]=1$ and take $b \in \Gamma_2$ such that $a \not = b$, then $bb^q \in \Gamma_5$, $qb \in \Gamma_6$ and there is an involution in $\langle q,b \rangle$, which centralizes $q$. It follows that $a \in \langle q,b \rangle$ and $(ab)^5=1$. 

Let $K= \langle \Delta \rangle$,  $r \in \Gamma_4$ and note that by \cite[Lemmas 12 and 13]{lmmj2014e} $Kr \subseteq \Gamma_4$. Therefore both $K$ and $G/K$ have an involution, and no elements of order $4$. 

By hypothesis there is an element $x$ of order $3$ such that $a$ and $x$ do not commute. Then $\langle a,x \rangle \simeq F_{150}$, and $u=a^{xa(x^{-1}a)^2} \in C_G(x)$. There are no elements of order 4 in $K$ and in $C=C_K(u)$, therefore $C \simeq O_3(C) \times \langle u \rangle$, and $O_3(C)$ is infinite by Lemma \ref{l:shunk}. Let $y \in O_3(C)$ such that $\langle y \rangle \not = \langle x \rangle$ and $[x,y]=1$. Let $z=xy$. Select involutions $v = a^{ya(y^{-1}a)^2}  \in C_G(y)$ and $w = a^{za(z^{-1}a)^2} \in C_G(z)$ in groups $\langle a,y \rangle$ and $\langle a,z \rangle$ correspondingly. Having $u,v \in C_G(y)$ and $u,w \in C_G(z)$ we obtain relations $u=v=w$. It follows that $L=\langle a,x,y \rangle$ is a homomorphic image of $$K=\big \langle a,x,y \; \big | \; \kappa \,\big \rangle,$$ where $$\kappa=\big \{a^2,x^3,y^3,(ax)^6,[a,x]^5,(ay)^6,[a,y]^5,[x,y],(az)^6,[a,z]^5,uv,uw \big\}.$$ Computations show that $K$ is finite and has an element $axayx$ of order $15$. Therefore $L$ is a homomorphic image of $K(p)=K/\langle (axayx)^p \rangle$, where $p \in \{3,5\}$. Computations show that $K(3)$ has no elements of order $5$, and $K(5) \simeq F_{150}$, hence $\langle x \rangle = \langle y \rangle$: a contradiction.
\end{proof}

\begin{lemma}\label{l:3and3} Let $a \in \Delta$. If $x \in \Gamma_3$ and $x^{a}=x^{-1}$, then $x \in O_3(G)$.
\end{lemma}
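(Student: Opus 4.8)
Let me think about this. We have $a \in \Delta$ (so $a = r^2$ for some $r \in \Gamma_4$), $x \in \Gamma_3$, and $x^a = x^{-1}$. We want $x \in O_3(G)$.

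The strategy mirrors the proof of Lemma \ref{l:D10} (the "$D_{10}$" case), which proves that $a$ inverts no element of order 5. There, the key was an analogue of Baer-Suzuki for $p = 3$ (Lemma \ref{l:bs3}). So here too, the natural approach is:

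First, establish that $\langle x, x^g \rangle$ is a 3-group for every $g \in G$, and then invoke Lemma \ref{l:bs3} to conclude $H = \langle x^G \rangle$ is a 3-group, hence $H = O_3(G) \supseteq \langle x \rangle$ (since $H$ is normal). The meat of the argument is therefore: given two conjugates $x_1, x_2$ of $x$, each inverted by a corresponding conjugate of $a$, show $\langle x_1, x_2 \rangle$ is a 3-group.

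Here's how I'd try to get there. Since $x^a = x^{-1}$, the subgroup $\langle a, x \rangle$ is dihedral of order 6, i.e. $\simeq S_3$, and $a \in \Delta$ means every involution in this $S_3$ lies in $\Delta$ (involutions in a dihedral group of order $2 \cdot 3$ are all conjugate). Actually let me be careful: in $\langle a, x\rangle \simeq S_3$, the three involutions $a$, $a^x$, $a^{x^2}$ are all conjugate, hence all in $\Delta$. Now take $g \in G$ and set $x' = x^g$, $a' = a^g$; so $a' \in \Delta$ and $(a')^{x'} = (x')^{-1}$. Consider $\langle x, x' \rangle$. I want to feed the pair of involutions $a, a'$, together with $x, x'$, into a coset-enumeration computation: the relations $a^2 = (a')^2 = 1$, $x^3 = (x')^3 = 1$, $(ax)^3 = (a'x')^3 = 1$ hold, and — crucially — for the various products of involutions among $\{a, a^x, a^{x^2}, a', (a')^{x'}, (a')^{(x')^2}\}$, which all lie in $\Delta = \Gamma_2$, their mutual products must lie in $\omega(A_7) \setminus \{2\}$ restricted appropriately. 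The point made at the end of Lemma \ref{l:D10} is relevant: under the hypotheses reached there, "the product of any two involutions is odd" became an identity preserved under homomorphic images. I would try to similarly argue that products of conjugates of $a$ (which are all in $\Delta$) are constrained — e.g. via Lemma \ref{l:333}, Lemma \ref{l:noD10}, Lemma \ref{l:noF42}, Lemma \ref{2+3-A4} — so that $(xx')$ is forced to have order dividing 3, or at least a 3-power. Then a finite coset-enumeration check of a presentation $\langle x, x' \mid x^3, (x')^3, \text{(derived relations)}, (xx')^i \rangle$ for $i \in \{4,5,6,7\}$ shows each such group is a 3-group, giving $\langle x, x' \rangle$ a 3-group.

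The main obstacle, I expect, is bridging from "$x^a = x^{-1}$ for some $a \in \Delta$" to enough relations on the pair $\langle x, x'\rangle$ to run the Baer–Suzuki machine — in the $D_{10}$ case this required first showing $C_G(a)$ has a unique involution and $\Delta = \Gamma_2 = a^G$, which let them treat $\Delta$ as a single conjugacy class with a graph structure. Here we likely cannot assume $\Delta = \Gamma_2$; instead I suspect we must either (i) reduce to a quotient $\overline G = G/O_3(G)$ and rule out, using \cite{l34e}, \cite{maze}, and the classification of $\mu(\overline G) \in \{\{3,4,5,7\},\{4,5,7\},\{5,6,7\},\dots\}$ together with Shunkov/Schmidt, that $x$ survives nontrivially in $\overline G$ — paralleling the endgame of Lemma \ref{l:D10}; or (ii) directly show, using Lemma \ref{l:noF42} and Lemma \ref{2+3-A4}, that $[x, x^g]$ has order a power of 3 and $x^g$ inverted-by-$\Delta$ forces $\langle x, x^g\rangle$ to avoid elements of order 6 with $[\,,\,]$ of order 5 or 7 (which would produce an involution in $C_G(x)$ not accounted for). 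I would pursue route (ii) first: show that if $|xx^g| = 6$ then $(xx^g)^3 \in \Delta$ and $\langle (xx^g)^3, x \rangle \simeq A_4$, contradicting Statement \ref{s:arxiv}; rule out $|xx^g| \in \{4,5,7\}$ by the commutator-order constraints of Lemma \ref{2+3-A4} and the forbidden-subgroup hypotheses; and handle $|xx^g| = 6$ with $[x,x^g]$ of order 3 by the $3^{1+2}$ argument as in step (3.4) and step (2) of Lemma \ref{l:noF42}. Once $\langle x, x^g\rangle$ is pinned down to a 3-group, Lemma \ref{l:bs3} closes it.
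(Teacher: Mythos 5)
You have the right skeleton --- reduce via the Baer--Suzuki analogue (Lemma \ref{l:bs3}) and close with coset enumerations seeded by relations coming from Lemmas \ref{2+3-A4}, \ref{l:noF42}, \ref{l:D10} and \ref{l:333} --- and this is indeed how the paper proceeds. But the concrete bridge you sketch in your route (ii) does not work. Lemma \ref{2+3-A4} constrains the product of a $\Delta$-involution with an element of order $3$; it says nothing about the product $xx^g$ of two elements of order $3$, so it cannot be used to exclude $|xx^g|\in\{4,5,7\}$ (two order-$3$ elements can a priori generate $A_5$, $L_2(7)$ or a Frobenius group of order $21$, and nothing you quote forbids this for $\langle x,x^g\rangle$, whose involutions need not lie in $\Delta$). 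Likewise, the claim that $|xx^g|=6$ forces $(xx^g)^3\in\Delta$ is unsupported: $\Delta$ is the set of squares of order-$4$ elements, and the cube of an order-$6$ element is just some involution. Your route (i) is also off target: passing to $G/O_3(G)$ is what the paper does \emph{after} this lemma (in Lemma \ref{l:D10} and in the proof of Statement \ref{fsg}), and it presupposes exactly the control of $O_3(G)$ that this lemma is meant to provide.

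The missing idea is the following. Write $b=ax$; then $b$ is an involution conjugate to $a$ in $\langle a,x\rangle\simeq S_3$, so $a,b\in\Delta$ and $x=ab$. The reduction via Lemma \ref{l:bs3} is then to show that $(xy)^3=(aby)^3=1$ for \emph{every} $y\in\Gamma_3$, not only for conjugates of $x$. For such a $y$, Lemmas \ref{2+3-A4}, \ref{l:noF42} and \ref{l:D10} give $(ay)^6=(a^ya)^3=1$ and the analogous relations for $b$, and Lemma \ref{l:333} supplies further order-$3$ relations among the $\Delta$-involutions involved. The paper then introduces the auxiliary elements $g=yy^a$ and $h=yy^b$, runs a first round of enumerations forcing $(abg)^3=(bag)^3=(abh)^3=(bah)^3=1$, and a second round showing that $\langle a,b,y\rangle$ is finite of order dividing $3^7\cdot 2$, whence $(aby)^3=1$. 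Without the two $\Delta$-involutions $a,b$ with $ab=x$ and the intermediate elements $g,h$, the presentations you would write down for $\langle x,x^g\rangle$ alone have no reason to collapse to $3$-groups.
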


\begin{proof} Let $b=ax$. By Lemma \ref{l:bs3} it is sufficient to prove that for every element $y$ of order $3$ the order of $xy$ divides $3$. By the assumptions we have the following set of relations $$\rho=\big \{a^2,b^2,(ab)^3,(ay)^6,(a^ya)^3,(by)^6,(b^yb)^3 \big \}.$$
	
Let $g=yy^a$. Then $g^3=1$ and $[g^a,g]=1$. By Lemmas \ref{2+3-A4},\ref{l:noF42} and \ref{l:D10} we obtain the relations $$\sigma=\big \{(bg)^6,(b^ag)^6,(b^gb)^3 \big \}.$$ By Lemma \ref{l:333} $(ba^g)^3=(ab^g)^3=1$. The group $\langle a,b,g \rangle$ is a homomorphic image of $$G(i,j)=\big \langle a,b,g \; \big | \; \rho \cup \sigma \cup \{g^3,[g^a,g],(ba^g)^3,(ab^g)^3,(abg)^i,(bag)^j \} \,\big \rangle,$$ where $i,j \in \{4,5,6,7	\}$. Computations show that $G(i,j)$ is a finite group, whose order divides $3^5 \cdot 2$. Hence, $(abg)^3=(bag)^3=1$, similarly, if $h=yy^b$, then $(abh)^3=(bah)^3=1$.  Define the set $\tau$ as
$$ \big \{y^3,(ay)^6,(a^ya)^3,(by)^6,(b^yb)^3,(ba^y)^3,(ab^y)^3,(abg)^3,(bag)^3,(abh)^3,(bah)^3 \big \}, $$
then the group $\langle a,b,y \rangle $ is a homomorphic image of  $$G(i)=\big \langle a,b,y \mid
\rho \cup \tau \cup \{(aby)^i \} \, \big\rangle,$$ where $i \in \{4,5,6,7 \}$.
Computations show that $G(i)$ is a finite group whose order divides $3^7 \cdot 2$, and hence the lemma is proved. 
\end{proof}

\begin{lemma}\label{l:O3} Assume $O_3(G) \not =1 $. Let $\bar{S}$ be a nontrivial 2-subgroup of $\bar{G}=G/O_3(G)$, let $S$ be its full preimage in $G$ and $\Gamma_2(S) \subseteq \Delta(G)$. Then $N_{\bar{G}}(\bar{S})$ has no elements of order $5$ or $7$.
\end{lemma}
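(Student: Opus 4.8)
The plan is to argue by contradiction: suppose $N_{\bar{G}}(\bar{S})$ contains an element $\bar{g}$ of order $p\in\{5,7\}$. First I would lift $\bar g$. Any preimage $g\in G$ of $\bar g$ satisfies $g^{p}\in O_3(G)$, and since $O_3(G)$ has exponent $3$ (as $9\notin\omega(G)$) while $|g|\le 7$, the order of $g$ must be exactly $p$; so $g\in\Gamma_p(G)$ with image $\bar g$, and $g$ normalizes the full preimage $S$ of $\bar S$. Here $S$ is a $\{2,3\}$-group, and it is locally finite, being an extension of the locally finite group $O_3(G)$ (Burnside) by $\bar S$, which has exponent dividing $4$ (since $8\notin\omega(G)$) and hence is locally finite by Sanov \cite{sane}.

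The first real step is to show $C_S(g)=1$. A nontrivial element of $C_S(g)$ has a power $v$ of order $2$ or $3$ commuting with $g$, and then $vg$ has order $2p\in\{10,14\}$ or $3p\in\{15,21\}$, none of which lies in $\omega(G)=\{1,\dots,7\}$. Thus $g$ acts fixed-point-freely on $S$, and in particular $C_{O_3(G)}(g)=1$. The second step is to deduce that \emph{every involution of $S$ centralizes $O_3(G)$}. Since $\bar S\neq 1$, $S$ contains an involution $a$, and $a\in\Delta(G)$ by hypothesis; for any finite subgroup $Q\le O_3(G)$ the subgroup $\hat F=\big\langle\,\langle a,Q\rangle^{g^{i}}:0\le i<p\,\big\rangle$ is a finite $\langle g\rangle$-invariant $\{2,3\}$-subgroup of $S$ on which $g$ acts fixed-point-freely, hence $\hat F$ is nilpotent by Thompson's theorem on fixed-point-free automorphisms of prime order, so $\hat F=O_2(\hat F)\times O_3(\hat F)$ with $a\in O_2(\hat F)$ and $Q\le O_3(\hat F)$; therefore $[a,Q]=1$, and letting $Q$ exhaust $O_3(G)$ gives $[a,O_3(G)]=1$.

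The contradiction then comes from a short order argument. Fix an involution $a\in\Delta(G)$ with $[a,O_3(G)]=1$ and write $a=c^{2}$ with $c\in\Gamma_4(G)$. Let $\gamma$ and $\delta$ be the automorphisms of $O_3(G)$ induced by $c$ and by $g$. If $\gamma$ fixed a nontrivial element of $O_3(G)$, then $G$ would contain an element of order $\lcm(4,3)=12$, which is impossible; so $\gamma$ is fixed-point-free, while $\gamma^{2}$ is induced by $c^{2}=a$ and hence is trivial. As $O_3(G)\neq 1$, $\gamma$ has order exactly $2$; a fixed-point-free automorphism of order $2$ forces the group to be abelian with the automorphism acting as inversion, so $O_3(G)$ is abelian and $\gamma=-\mathrm{id}$. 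Likewise $\delta\neq 1$ (otherwise $g$ centralizes $O_3(G)$, against $C_{O_3(G)}(g)=1$) and $|\delta|=p$. Since $\gamma=-\mathrm{id}$ is central in $\Aut(O_3(G))$, the automorphism induced by $cg$ equals $\gamma\delta$, of order $\lcm(2,p)=2p$; hence $|cg|\ge 2p\ge 10$, contradicting $\omega(G)=\{1,\dots,7\}$. This proves the lemma.

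I expect the main obstacle to be the middle step, namely passing from ``$g$ acts fixed-point-freely on $S$'' to ``every involution of $S$ centralizes $O_3(G)$'': this requires the reduction to finite $\langle g\rangle$-invariant subgroups together with Thompson's theorem. The closing order-$2p$ argument, although it is the heart of the contradiction, is then immediate. (A more computational alternative would use the same fixed-point-free action to reduce to a finite Frobenius group $V\rtimes\langle g\rangle$ with $V$ a $2$-group of exponent dividing $4$ all of whose involutions lie in $\Delta(G)$, and then apply Lemmas \ref{l:D10} and \ref{l:3and3}; the structural route above seems preferable.)
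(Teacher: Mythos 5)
Your proof is correct, and its first two thirds coincide with the paper's argument: lift $\bar g$ to an element $g$ of order $p$, observe that $g$ acts fixed-point-freely on the locally finite $\{2,3\}$-group $S$, and use Thompson's theorem on finite $\langle g\rangle$-invariant subgroups to conclude that the involutions of $S$ centralize $O_3(G)$ (the paper phrases this as $S=O_3(G)\times S'$ via a finite Frobenius group with complement $\langle g\rangle$ and nilpotent kernel). Where you genuinely diverge is the closing contradiction. The paper takes an involution $c\in\Gamma_2(S)\subseteq\Delta$ centralizing $O_3(G)\neq 1$, notes that for every $y\in G$ the dihedral group $\langle c,c^y\rangle$ lies in the centralizer of a nontrivial $3$-element (so has no elements of order $5$ or $7$) and, by Lemma \ref{l:3and3}, that any element of order $3$ inverted by $c$ would lie in $O_3(G)$, which $c$ centralizes; hence $\langle c,c^y\rangle$ is a $2$-group, the Baer--Suzuki analogue of \cite{bs2014e} puts $c$ into $O_2(G)$, and Statement \ref{s:o2} gives the contradiction. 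You instead write $a=c^2$ with $c\in\Gamma_4$, show that $c$ induces a fixed-point-free automorphism of order $2$ on $O_3(G)$ (else $12\in\omega(G)$), deduce that $O_3(G)$ is abelian with $c$ acting as inversion, and conclude that $cg$ induces an automorphism of order $\lcm(2,p)=2p\geq 10$, which is impossible. Your ending is more self-contained --- it needs neither Lemma \ref{l:3and3} nor the exponent-$4$ Baer--Suzuki theorem nor the earlier reduction $O_2(G)=1$ --- at the modest cost of invoking the extension of the classical ``fixed-point-free involution forces abelian plus inversion'' fact from finite to locally finite groups; since $O_3(G)$ has exponent $3$ and is therefore locally finite, as you note, that reduction to finite $\gamma$-invariant subgroups is routine. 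Both routes are sound.
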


\begin{proof} Assume that $N_{\bar{G}}(\bar{S})$ contains an element $\bar{x}$ of order 5 or 7. Then its preimage $x$ acts fixed point freely on $O_3(G)\bar{S}=S$. By \cite{mamontov2013e} $S$ is locally finite. By Schmidt's theorem $\langle S,x \rangle$ is locally finite. Take $a \in \Gamma_{3}(S)$  and $b \in S$ of order dividing $4$. Then $\langle a,b,x \rangle$ is a finite Frobenius group such that $\langle x \rangle$ is its complement and $\langle a,b \rangle$ is in its kernel, which is nilpotent. It follows that
$[a,b]=1$ and $S=O_3(G) \times S'$, where $S'$ is a nontrivial 2-subgroup. 

Let $c$ be an involution in $S'$ and let $y \in G$. Both $c$ and $c^y$ centralize a Sylow 3-subgroup $O_3(G) \not = 1$ of $S$, hence $\langle c,c^y \rangle$ has no elements of orders $5$ and $7$. We have $c \in \Delta$ and hence, by Lemma \ref{l:3and3}, any element of order $3$ inverted by $c$ is in $O_3(G)$. At the same time, $c$ centralizes $O_3(G)$, so  $\langle c,c^y \rangle$ is a $2$-group. By an analog of Baer-Suzuki theorem for $p=2$ (see \cite{bs2014e}) $c \in O_2(G)$, a contradiction by Statement \ref{s:o2}.
\end{proof}

{\it Proof of Statement \ref{fsg}.}

Assume the contrary and take $a \in \Delta$ and $x \in \Gamma_3$. By Lemmas \ref{l:noF42} and \ref{l:noD10} we have $(aa^x)^3=1$. 

Assume first that for all $x \in \Gamma_3$ we have $a=a^x$. Then any two elements of $a^G$ are in the centralizer of an element of order $3$ and generate $2$-subgroup. Using  an analog of Baer-Suzuki theorem for $p=2$ \cite{bs2014e}, we obtain $a \in O_2(G)$, a contradiction with Statement \ref{s:o2}.

Choose $x \in \Gamma_3$ such that $y=aa^x \in \Gamma_3$. Then $y \in O_3(G)$ by Lemma \ref{l:3and3}. Consider $\overline{G}=G/O_3(G)$. 

If $\mu(\overline{G})=\{3,4,5,7\}$, then $\overline{G} \simeq L_3(4)$ by \cite{l34e}, and $G$ is locally finite, a contradiction with Lemma \ref{l:nonlf}.  

Assume that $\omega(\overline{G}) = \omega (G)$. Then $\bar{a}$ commutes with $\Gamma_3(\overline{G}) \not =1$ and, as shown above, $\bar{a} \in O_2(\overline{G})$, a contradiction with Lemma \ref{l:O3}.

Finally, assume that  $\mu(\overline{G}) = \{4,5,7\}$. We denote $H=\overline{G}$, we omit the bars and we split the proof in several steps.

\vspace{1mm}

{\bf (1)}  All involutions of $H$ are conjugated. 

By the properties of dihedral subgroups it is sufficient to prove that if $b \in \Gamma_2(H)$ and $[a,b]=1$, then $a$ and $b$ are conjugated. By Lemma \ref{l:O3}, $\langle \Delta \rangle$ is not a 2-group. By an analog of Baer-Suzuki theorem for $p=2$ \cite{bs2014e}, there is an involution $c$ such that the order of $ac$ is $5$ or $7$. Orders of $bc^a$ and $a^{ca}b$ are even. Therefore $\langle a,b,c \rangle$ 
is a homomorphic image of $$J_p(i)= \big \langle a,b,c \mid  a^2,b^2,c^2,[a,b], (bc^a)^4, (a^{ca}b)^4, (ac)^p, (abc)^i  \big \rangle,$$ with $p \in \{5,7\}$ and $i \in \{4,5,7\}$. Computations show that $J_5(5) \simeq 2^4:D_{10}$, $J_7(7) \simeq 2^6:D_{14}$, $|J_p(i)| $ divides $4$  for the other parameters, moreover $b \in \Delta(J_5(5))$ or $\Delta(J_7(7))$ correspondingly. So we obtain a contradiction by Lemma \ref{l:O3}.

\vspace{1mm}

{\bf (2)} Let $X=\{x \mid x\in H,\, a^x \in C_H(a) \}$. If $x \in X$, then $x^4=1$.

Assume the statement is not true and let 
$$\rho(i_1,i_2,i_3,i_4)=\{ (ax)^{i_1},(ax^2)^{i_2},[a,x^2]^{i_3},((x^2x^a)^2(x^ax^2)^{-1})^{i_4}\},$$ then $\langle a,x \rangle$ is a homomorphic image of $$J_p=J_p(i_1,i_2,i_3,i_4)=\big \langle a,x \; \big | \; \{a^2,x^p,[a,a^x]\} \cup \rho(i_1,i_2,i_3,i_4) \, \big \rangle,$$ where $p\in\{5,7\}$ and $i_1,i_2,i_3,i_4 \in \{4,5,7\}$. Computations show that we have $|J_p| \in \{1,2,p\}$ except the following two cases: $J_5(5,5,4,5)\simeq 2^4:5$ and $J_7(7,7,4,7) \simeq V:7$, where $|V|=2^{12}$, which are not possible by Lemma \ref{l:O3}.

\vspace{1mm}

{\bf (3)} $\Gamma_2(X \setminus C_H(a))=\emptyset$. In particular $x^2 \in C_H(a)$ for every $x \in X$.

Assume $x\in \Gamma_2(X\setminus C_H(a))$, then $aa^x$ is an involution in $C_H(a)$. By definition of $X$ and step (1), there is $y \in X$ such that $a^y=aa^x$.
Since $a^{yx}=aa^x \in C_H(a)$ we have $yx \in X$. From step (2) it follows that $(xy)^4=1=(ayx)^4$ and $(a(yx)^2)^4=1$. Therefore $\langle a,x,y \rangle$ is a homomorphic image of $$T(i_1,i_2) = \big \langle a,x,y \; \big | \; \tau \cup \{ (xy^2)^{i_1},(axyay^2)^{i_2} \}\, \big\rangle,$$
where $$\tau=\{ a^2,x^2,y^4,(ax)^4,(xy)^4,(ayx)^4,(a(yx)^2)^4,a^yaa^x \}$$ and $i_1,i_2 \in \{4,5,7\}$. Computations show that all such groups are finite, and in all of them $a=1$, a contradiction.

\vspace{1mm}

{\bf (4)} If $x,y \in \Gamma_2(H)$ and $(xy)^4=1$, then $(xy)^2=1$. In particular, $J=\langle \Gamma_2(C_H(a)) \rangle$ is elementary abelian.

Let $(xy)^4=1$, then $x^y \in C_H(x)$ and hence $y \in X(x)$. By step (3) we have $y \in C_H(x)$.

\vspace{1mm}

{\bf (5)} There is a contradiction.

Let $x$ be an arbitrary involution commuting with $a$. By step (1) $x=a^g$ and $ax=a^h$ for $g,h \in X$. By step (4) $g,h \in N_H(J)$. By Lemma \ref{l:O3} $N_H(J)$ is a group of exponent 4, hence it is locally finite \cite{sane}. Therefore $L=\langle a,g,h \rangle$ is finite and nilpotent and $$a^h=ax=aa^g=[a,g] \in [L,a],$$ which is impossible (unless $a=1=x$). A contradiction. \hfill $\square$

\end{section}

\begin{section}{Symmetric subgroups}
	
By Statement \ref{fsg}, $G$ has a subgroup $H$ isomophic to $A_5$ or $L_2(7)$ such that $\Gamma_2(H) \subseteq \Delta$.
 
Our strategy of proof is to use the following ``path'' (see {\sc Figure 1}): on each step assume that $G$ contains a subgroup $H$, isomorphic to a vertex label, and deduce that $G$ contains one of the subgroups connected with that vertex by a path, or obtain a contradiction.

\Large
\begin{center}
\begin{tikzcd} 
		& \mathrm{L_{3}(2)} \arrow[d] & \mathrm{A_{5}} \arrow[d] \arrow[rd]  &  & \\ 
		& \mathrm{L_{3}(4)} & \arrow[l] \mathrm{A_{6}} \arrow[d] \arrow[dr] &  \mathrm{S_{5}} \arrow[dl]  \arrow[d]  &    \\
		&& \mathbf{A_{7}} & \mathrm{S_{6}}&\\
\end{tikzcd} 
\large
\\ \vspace{-6mm} {\sc Figure 1} \\
\end{center}
\normalsize

The first step is to consider the case $H \simeq A_5$, where we additionally assume that $\Gamma_2(H) \subseteq \Delta$. In this case there is $a \in \Delta$ which inverts an element of order $5$ and $C_H(a)$ contains an involution $t \not = a$. By Lemma \ref{l:noD10} we deduce that $G$ has a subgroup isomorphic to $A_6$, $L_3(4)$, or $S_5$. 
In this paragraph we consider other symmetric and alternating groups. 

The proof of the followig lemma is naturally obtained from the proof of \cite[Lemma 14]{lmmj2014e}.

\begin{lemma}\label{l:s4} Let $H$ be a subgroup of  $G$ isomorphic to $S_4$ and $V=O_2(H)$. Let $c \in \Gamma_{3}(H)$, $s\in \Gamma_{2}(H)$ such $c^{s}=c^{-1}$ and $v \in H$ such that $\langle v \rangle =C_V(s)$. Set $V_1= \langle s,v \rangle$, and let $S= VV_1$ be a Sylow 2-subgroup of $H$. Then one of the following holds:
\begin{enumerate}
\item $C=C_G(V)=V$, $S$ is a Sylow $2$-subgroup of $G$, and
$N=C_G(v)$ is an extension of elementary abelian $3$-group $R$ by $S$, and $[R,s]=1$.
\item $H$ normalizes nontrivial cyclic subgroup. Moreover, if $\langle V,c \rangle$ is contained in a subgroup, isomorphic to $A_5$, then either $H$ normalizes a cyclic subgroup of order $2$, or $\langle V,c \rangle$ is contained in a subgroup isomorphic to $A_7$.
\item There exists elementary abelian subgroup $W$ of order $4$ in $C$ such that $W \not \leq H$, $H \leq N_G(W)$ and $c$ acts on $W$ fixed point freely.
\end{enumerate}
\end{lemma}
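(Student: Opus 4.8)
The plan is to branch on the structure of $C := C_G(V)$, which contains $V$ in its centre. Since every element of $C$ commutes with the involution $v$, we have $C \le C_G(v)$, so $C$ — and likewise $N := C_G(v)$ — has no element of order $5$ or $7$ (these would give $10,14 \in \omega(G)$), hence $\omega(C) \subseteq \{1,2,3,4,6\}$; also every $2$-subgroup of $G$ has exponent dividing $4$ and so is locally finite by Sanov's theorem \cite{sane}. In all cases $C_H(V) = V$, whence $H \cap C = V$ and, since $N_G(V)/C$ embeds in $\Aut(V) \cong S_3$ while $HC/C \cong S_3$, we get $N_G(V) = HC$. I split into the cases $C = V$ and $C > V$.

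Suppose $C = V$. Then $N_G(V) = H$. If $S$ were not a maximal $2$-subgroup, local finiteness would give a $2$-element $g$ with $g \notin S$, $g^2 \in S$, $g \in N_G(S)$; the automorphism $g$ induces on $S \cong D_8$ cannot be inner — else $g \in S\,C_G(S) = S$, because $C_G(S) = C_V(V_1) = \langle v\rangle$ — so it interchanges the two Klein-four subgroups $V$ and $V_1$ of $S$; then $V_1 = V^g$ is again self-centralizing with $N_G(V_1) = H^g \cong S_4$, and a coset enumeration on $\langle H,g\rangle$ (which contains the two distinct $S_4$'s $H$ and $H^g$ sharing $S$) produces a contradiction, so $S$ is Sylow. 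For $N = C_G(v)$: here $v \in Z(N)$ and $S \le N$ is Sylow, and I would show that the elements of order $3$ of $N$ generate a normal elementary abelian $3$-subgroup $R$ — given one such $y$, the relations imposed on $\langle y, y^g, v\rangle$ and a few further conjugates by $\omega(G) = \omega(A_7)$ reduce to finite coset enumerations in the spirit of paragraphs 3 and 5 and force $[y,y^g] = 1$ — whence $R = O_3(N)$, $N/R$ is a $2$-group, necessarily $\cong S$, and analyzing the action of the reflection $s \in V_1$ on $R$ gives $[R,s] = 1$: this is alternative (1).

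Suppose $C > V$; the target is alternative (2) or (3). Two facts drive the argument: $c$ normalizes $C$ but $c \notin C$ (since $C_G(C) \le C_G(V) = C$), so $c$ acts nontrivially on $C$; and $s$ normalizes $C$ (as $N_G(V) = HC$) and, because $c^s = c^{-1}$, $s$ maps the $c$-fixed points of any $c$- and $s$-invariant subgroup onto themselves — the key device for upgrading $\langle c\rangle$-invariance to $H$-invariance. If $C$ contains an element of order $3$, I take a $c$-invariant nontrivial finite $3$-subgroup $P_0 \le C$; then $P_0 \rtimes \langle c\rangle$ is a finite $3$-group, so $C_{P_0}(c) \ne 1$, and inside a minimal normal $\langle V,c,s\rangle$-invariant subgroup of $Z(C_C(c))$ — an elementary abelian $3$-group centralized by $\langle V,c\rangle \cong A_4$ and acted on by $s$ through a group of order $\le 2$ — I pick an eigenvector $y$ of $s$; then $\langle y\rangle$ is normalized by all of $H$, giving (2). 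If $C$ has no element of order $3$ (so $C$ is a $2$-group), then either $C$ contains a four-subgroup $W \ne V$, which I refine to one normalized by $H$ on which $c$ acts fixed-point-freely (a fixed point of $c$ on $W$ would lie in $C_G(\langle V,c\rangle)$ and be absorbed into the other cases), giving (3), or $V$ is the only four-subgroup of $C$, and here the remaining configurations (such as $C$ homocyclic) need separate, careful treatment — either extracting an $H$-invariant cyclic subgroup or deriving a contradiction.

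The hard part will be exactly this $C > V$ case, and especially the last ``$2$-group, no other four-subgroup'' subcases, together with the refinement of (2): when (2) holds and $\langle V,c\rangle$ lies in a subgroup isomorphic to $A_5$, I would obtain the stated dichotomy — $H$ normalizes a cyclic group of order $2$, or $\langle V,c\rangle$ lies in an $A_7$ — by feeding the configuration into the ``path'' of paragraph 6, using Statement \ref{s:arxiv} and the lemmas of paragraph 5 to promote the $A_5$ through $A_6$, $S_5$, $S_6$ to an $A_7$ unless the obstruction is precisely the normalized involution. For all of this I expect to lean heavily on the template of \cite[Lemma 14]{lmmj2014e}.
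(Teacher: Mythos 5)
The paper does not actually write out a proof of this lemma: it says only that the proof ``is naturally obtained from the proof of \cite[Lemma 14]{lmmj2014e}'', so the comparison is against that template. Your skeleton --- splitting on $C=C_G(V)=V$ versus $C>V$, then on whether $C$ contains $3$-elements, and settling each branch by coset enumeration --- is the right shape and matches that template. But two of the places where you defer the work are genuine gaps, not just omitted computations. First, your treatment of the ``Moreover'' clause of alternative (2) is circular: you propose to promote the $A_5$ containing $\langle V,c\rangle$ to an $A_7$ ``by feeding the configuration into the path of paragraph 6'', but every lemma on that path (Lemmas \ref{l:a6}, \ref{l:s5}, \ref{l:s6}, \ref{l:a7}) is itself proved by applying Lemma \ref{l:s4}, and the proof of Lemma \ref{l:a6} invokes precisely the dichotomy ``either an $A_7$ or a centralizing involution'' that you are trying to establish. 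Likewise the lemmas of paragraph 5 are proved under the standing hypothesis of that paragraph (no simple subgroup with involutions in $\Delta$) and are not available here. The ``Moreover'' clause is exactly the new content of this lemma relative to \cite[Lemma 14]{lmmj2014e} (there $7\notin\omega$ and the $A_7$ branch is vacuous), so it has to be proved directly --- in the style of this paper, by enumerating cosets in a presentation of $A_5$ together with the $H$-normalized generator --- not borrowed from downstream.

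Second, in the branch $C>V$ with $C$ a $2$-group, your own analysis leaves open the sub-case where $V$ is the unique four-subgroup of $C$ (e.g.\ $C$ homocyclic abelian of exponent $4$, where $\Omega_1(C)=V$). There neither alternative (1) nor (3) can hold, $c$ permutes the cyclic subgroups of order $4$ of $C$ in orbits of length $3$, so no nontrivial cyclic subgroup of $C$ is $H$-invariant, and you give no argument producing an $H$-normalized cyclic subgroup elsewhere or a contradiction with $\omega(G)$; saying it ``needs separate, careful treatment'' leaves the lemma unproved in that configuration. (By contrast, your handling of the complementary sub-case $\Omega_1(Z(C))>V$ can be made to work: viewing $\Omega_1(Z(C))$ as an $\mathbb{F}_4\langle c\rangle$-module on which $s$ acts semilinearly does yield an $H$-invariant line $W\neq V$ meeting $H$ trivially.) The remaining deferred coset enumerations (the Sylow claim in case (1), the elementary abelian structure of $R$, the fixed-point eigenvector argument when $3\in\omega(C)$) are consistent with how the paper and \cite{lmmj2014e} argue and I would accept them once the presentations are written down; the two items above are the ones that actually block the proof.
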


\begin{lemma}\label{l:2and5} Assume $H=\langle x,y,t \mid \sigma \rangle$, where $$\sigma=\{ x^{2},y^{2},t^{2}, (xt)^{2}, (xy)^{5}\}.$$ If $(yt)^{7} \not =1$, then $7 \not \in \omega(H)$ and either $H$ is isomorphic to $S_5$ or $S_6$, or one of the following sets of relations holds:

$$\tau_1=\{(yt)^5,(xyt)^6,((xyt)^2y)^6,(x(yt)^2)^5 \},$$
$$\tau_2=\{ (yt)^6,(xyt)^5,((xyt)^2y)^6,(x(yt)^2)^5 \},$$
$$\tau_3=\{ (yt)^4,(xyt)^5,((xyt)^2y)^4,(x(yt)^2)^4 \},$$ $$\tau_4=\{ (yt)^5,(xyt)^4,((xyt)^2y)^4,(x(yt)^2)^4 \}.$$

Moreover $$\langle x,y,t \mid \sigma \cup \tau_{1} \rangle \simeq \langle x,y,t \mid \tau(6) \cup \tau_{2} \rangle \simeq 2^4:A_5,$$ while $$\langle x,y,t \mid \sigma \cup \tau_{3} \rangle \simeq \langle x,y,t \mid \sigma \cup \tau_{4} \rangle \simeq 2^{4}: D_{10}. $$ 
\end{lemma}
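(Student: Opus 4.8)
\textbf{Proof plan for Lemma \ref{l:2and5}.}

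The statement asserts that the group $H = \langle x,y,t \mid \sigma \rangle$ with $\sigma = \{x^2,y^2,t^2,(xt)^2,(xy)^5\}$, under the hypothesis $(yt)^7 \ne 1$, has a very restricted structure. The plan is a case analysis on the order of $yt$ and of the other controlled elements, carried out essentially by coset enumeration in {\sc Gap}. First I would note that $\langle x,y\rangle$ is a homomorphic image of $S_3 \times 5$ or $D_{10}$ or smaller; since $(xy)^5=1$ and $xt$ is an involution, the presence of a $7$-element would have to come through $yt$ or products involving $yt$. By Lemma \ref{2+3-A4} (applied to the involution and the order-$3$ or order-$5$ elements present in $H$, noting $\Gamma_2(H)\subseteq\Delta$ is inherited from the ambient situation) and the classification of $(2,3)$- and $(2,5)$-generated subgroups with spectrum inside $\omega(A_7)$, the orders of $yt$, $xyt$, $(xyt)^2 y$, and $x(yt)^2$ are each forced into small finite sets, namely divisors of elements of $\{4,5,6,7\}$ together with the constraint that $7 \notin \omega(H)$ once we assume $(yt)^7\ne1$ leads to a contradiction unless $|yt|\in\{4,5,6\}$.

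The core of the argument is then: enumerate all admissible tuples of exponents $(|yt|,|xyt|,|(xyt)^2y|,|x(yt)^2|)$ ranging over the allowed small values, form the finitely presented group $\langle x,y,t \mid \sigma \cup \{(yt)^{j_1},(xyt)^{j_2},((xyt)^2y)^{j_3},(x(yt)^2)^{j_4}\}\rangle$ for each tuple, and run coset enumeration. The outcome to be verified computationally is that in every case either the group is finite with spectrum not contained in $\omega(A_7)$ (excluded), or it collapses to a group that is a homomorphic image of $S_5$ or $S_6$, or it is one of the four groups named $2^4:A_5$ or $2^4:D_{10}$ corresponding precisely to the relation sets $\tau_1,\tau_2,\tau_3,\tau_4$. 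The isomorphisms $\langle x,y,t\mid \sigma\cup\tau_1\rangle \simeq \langle x,y,t\mid \tau(6)\cup\tau_2\rangle \simeq 2^4:A_5$ and $\langle x,y,t\mid\sigma\cup\tau_3\rangle\simeq\langle x,y,t\mid\sigma\cup\tau_4\rangle\simeq 2^4:D_{10}$ are then checked directly by computing the order and structure of each presented group (identifying the normal elementary abelian $2$-subgroup of rank $4$ as the subgroup generated by conjugates of an appropriate involution, and the quotient as $A_5$ or $D_{10}$). Here $\tau(6)$ refers to the relation set obtained from $\sigma$ by replacing $(yt)^7$-freedom with $(yt)^6=1$ as recorded in the earlier notation.

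The main obstacle is purely organizational: ensuring the case list of exponent-tuples is exhaustive given the constraints coming from Lemmas \ref{2+3-A4} and \ref{l:noF42} and from the hypothesis $(yt)^7\ne1$, and then confirming that coset enumeration terminates for every tuple (which it does, since each resulting group turns out finite). The one subtlety requiring care is the claim ``$7\notin\omega(H)$'': this is not automatic from $(yt)^7\ne1$ alone, but follows because any order-$7$ element of $H$ would, together with an involution, generate a subgroup forcing (via \cite[Lemma 2.1]{m2020e} and Lemma \ref{l:noF42}) a configuration incompatible with the remaining relations in $\sigma$; I would isolate this as the first sub-step. Once $7\notin\omega(H)$ is established, the element $yt$ has order dividing $\lcm$ of $\{1,2,3,4,5,6\}$ but in fact order in $\{4,5,6\}$ (orders $1,2,3$ being ruled out since they would put $yt$ or a power into the dihedral group $\langle x,y\rangle$ or make $\langle y,t\rangle$ too small, contradicting $(yt)^7\ne1$ being a nondegeneracy assumption), and the enumeration closes.
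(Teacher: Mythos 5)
Your proposal matches the paper's proof in its essentials: both bound the orders of the four words $yt$, $xyt$, $(xyt)^2y$ and $x(yt)^2$ by $\omega(G)$ (with the exponent of $yt$ restricted to $\{4,5,6\}$ by the hypothesis $(yt)^7\neq 1$, the cases $|yt|\leq 3$ being subsumed by these exponents rather than excluded), and then identify every resulting finitely presented group $G(i_1,i_2,i_3,i_4)$ by coset enumeration. The only cosmetic difference is that the paper obtains $7\notin\omega(H)$ as an output of that enumeration rather than establishing it as a preliminary sub-step.
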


\begin{proof} The group $\langle x,y,t \rangle$ is a homomorphic image of one of the following groups:
 $$G(i_1,i_2,i_3,i_4)= \big \langle x,y,t \; \big | \;
\sigma \cup \{(yt)^{i_1},(xyt)^{i_2},((xyt)^2y)^{i_3},(x(yt)^2)^{i_4} \}
\big \rangle , $$ with $i_1 \in \{4,5,6\}$ and $i_2,i_3,i_4 \in \{ 4,5,6,7 \}.$ Computations show that $$G(4,6,5,6) \simeq G(6,4,5,6) \simeq S_5$$ and $$G(6,6,5,4) \simeq S_6,$$ while other nontrivial cases are listed in statement of the lemma. 
\end{proof}

\begin{lemma}\label{l:a6} If $G$ contains a subgroup $H$ isomorphic to $A_6$, then $G$ contains a subgroup isomorphic to $S_6$, $A_7$, or $L_3(4)$.
\end{lemma}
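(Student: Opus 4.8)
\emph{Setup.} In $A_6$ every involution is the square of an element of order $4$, so the condition $\Gamma_2(H)\subseteq\Delta$ holds automatically and need not be assumed. Fix a subgroup $H_0\le H$ with $H_0\simeq S_4$ together with the data of Lemma~\ref{l:s4}: $V=O_2(H_0)\simeq 2^2$, $c\in\Gamma_3(H_0)$, $s\in\Gamma_2(H_0)$ with $c^s=c^{-1}$, a generator $v$ of $C_V(s)$, $V_1=\langle s,v\rangle$ and $S=VV_1\simeq D_8$. Inside $H\simeq A_6$ the subgroup $\langle V,c\rangle\simeq A_4$ lies in a subgroup isomorphic to $A_5$, so when Lemma~\ref{l:s4} is applied to $H_0$ the refined form of conclusion~(2) is available. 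The proof proceeds by running through the three conclusions.

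\emph{Conclusion~(2).} Either $\langle V,c\rangle$ lies in a subgroup of $G$ isomorphic to $A_7$, and we are done, or $H_0$ centralises an involution $z$ (normalising a group of order $2$ is centralising its generator). Since $S_4$ is maximal in $A_6$ with trivial centre, $C_H(H_0)=1$ and hence $z\notin H$. If $z$ normalises $H$, then $\langle H,z\rangle=H\langle z\rangle$ is finite of order $720$; its centraliser of $H$ has order at most $2$, and an involution centralising $H$ would give, together with an element of order $5$ of $H$, an element of order $10\notin\omega(G)$, so $C_{\langle H,z\rangle}(H)=1$ and $\langle H,z\rangle$ is one of $S_6$, $PGL_2(9)$, $M_{10}$; the last two contain elements of order $8\notin\omega(G)$, so $\langle H,z\rangle\simeq S_6$. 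If $z$ does not normalise $H$, then $H$ and $H^z$ are distinct copies of $A_6$ meeting in (at least) the subgroup $H_0\simeq S_4$, and I would present $\langle H,z\rangle$ by a presentation of $A_6$ augmented by $z^2$, $[H_0,z]=1$, and the relations forced on the products of $z$ with the elements of $H$ of orders $2$ and $3$ by $\omega(G)=\omega(A_7)$ and by Lemmas~\ref{2+3-A4},~\ref{l:noF42},~\ref{l:D10},~\ref{l:333}; a coset enumeration should then return $\langle H,z\rangle\simeq S_6$ or a degenerate quotient --- an element of forbidden order, a nontrivial normal $2$- or $3$-subgroup (against Statement~\ref{s:o2} or Lemma~\ref{l:O3}), or a locally finite overgroup (against Lemma~\ref{l:nonlf}).

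\emph{Conclusions~(1) and~(3).} In conclusion~(3) there is an elementary abelian $W$ of order $4$ with $W\le C_G(V)$, $W\ne V$, $H_0\le N_G(W)$ and $c$ acting fixed-point-freely on $W$; since $C_{A_6}(V)=V$ we have $W\not\le H$, and since $c$ permutes the involutions of $V$ cyclically, $c$ acts fixed-point-freely on $\langle V,W\rangle$, which is therefore elementary abelian of order $16$, and $\langle V,W\rangle\rtimes\langle c\rangle$ is a Frobenius group. I would enumerate $\langle H,W\rangle$ from a presentation of $A_6$ augmented by the defining relations of $W$, by $[V,W]=1$, by the fixed-point-free action of $c$, and by the restrictions from $\omega(G)$: the group $\langle H,W\rangle$ cannot be $A_7$ or $S_6$, since in those groups the centraliser of a Klein four-subgroup of $A_6$ contains no second Klein four-subgroup normalised by the corresponding $S_4$, so the enumeration yields $L_3(4)$ or a collapse. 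In conclusion~(1) one has $C_G(V)=V$, $S$ a Sylow $2$-subgroup of $G$, and $C_G(v)=R\rtimes S$ with $R$ an infinite elementary abelian $3$-group and $[R,s]=1$; writing $v=\sigma^2$ with $\sigma\in S$ of order $4$ we get $[R,v]=1$ while $[R,\sigma]\ne1$ (else $r\sigma$ has order $12$ for $r\in R$ of order $3$), so $\sigma$ inverts $R$; feeding $\sigma$, $s$ and the order-$3$ and order-$5$ elements of $H\simeq A_6$ that act on $R$ into a presentation forces an element of forbidden order or a locally finite overgroup, eliminating this case.

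\emph{Expected main obstacle.} The delicate step is the branch of conclusion~(2) in which $H_0$ acquires a centralising involution $z$ that need not normalise $H$: here one must adjoin to the presentation precisely enough valid consequences of $\omega(G)=\omega(A_7)$ (and of the earlier lemmas) to make the coset enumeration terminate, verify that every imposed relation genuinely holds in $G$, and read off that the only non-degenerate quotient is $S_6$. The remaining branches of conclusion~(2) and conclusions~(1) and~(3) are comparatively mechanical once the right presentations and the overgroup structure of $A_6$ in $\Aut(A_6)$ are in hand.
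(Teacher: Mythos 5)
Your skeleton is the same as the paper's: apply Lemma~\ref{l:s4} to an $S_4$-subgroup of $H\simeq A_6$, split on its three conclusions, and finish each branch by coset enumeration. But the proof is not actually there, because in every branch the decisive step is ``I would enumerate \dots\ and it should return $S_6$ / $L_3(4)$ / a collapse''. For a statement of this kind that step \emph{is} the proof: one must (i) exhibit a specific finite set of relations, (ii) verify that each relation genuinely holds in $G$ (this is where $\omega(G)=\omega(A_7)$ and the earlier lemmas enter, relation by relation), and (iii) report that the resulting finitely presented group has been computed and what it is. None of this is supplied. The paper needs the auxiliary Lemma~\ref{l:2and5} precisely to manufacture enough valid relations (the sets $\psi_1,\psi_2$ for the two $\langle x,y,t\rangle$-type subgroups) to make the enumeration in the $W$-branch terminate in a homomorphic image of $L_3(4)$, $2.L_3(4)$ or $2^5{:}A_6$; and in the centralizing-involution branch it needs the observation that $xz\in\Gamma_3$ forces $(tz)^7\neq 1$ via \cite[Lemma 2.2]{m2020e} before the exponents can be bounded. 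Without such input there is no reason to expect your presentations to define finite groups at all. Your sub-branch of conclusion~(2) in which $z$ does not normalize $H$ --- which you correctly identify as the hard case, and which is the generic one --- is entirely deferred.

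There is also a structural difference worth noting. The paper never fights conclusion~(1) of Lemma~\ref{l:s4} head-on: it observes that $C_G(V)=V$ forces $C_G(V_1)>V_1$ (because $C_G(v)$ is then an extension of an elementary abelian $3$-group $R$ by $S$ with $[R,s]=1$, so $R$ centralizes $V_1=\langle s,v\rangle$), and since all involutions of $A_6$ are conjugate one may replace the chosen $S_4$ by a conjugate and assume $C_G(V)>V$ from the outset. Hence only conclusions~(2) and~(3) ever require computation. Your direct attack on conclusion~(1) --- $\sigma$ inverts $R$, then ``feeding $\sigma$, $s$ and elements of $H$ into a presentation forces a forbidden order'' --- is again an unspecified enumeration, and $R$ is infinite, so it is not even clear what finite presentation you intend to enumerate. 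The one genuinely complete piece of your argument is the sub-branch where $z$ normalizes $H$, disposed of via $\Aut(A_6)$ and the element orders of $PGL_2(9)$ and $M_{10}$; that is a clean shortcut, but it covers only the easiest fragment of the easiest case.
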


\begin{proof} We identify $H$ with $A_6$ and we denote $a=(1,2,3)$, $b=(1,2,4)$, $c=(1,2,5)$, and $d=(1,2,6)$. Then $a,b,c,d$ generate $A_6$ and satisfy the following set of relations:
$$\alpha=\{a^3,b^3,c^3,d^3,(ab)^2,(ac)^2,(ad)^2,(bc)^2,(bd)^2,(cd)^2 \},$$ furthermore $\alpha$ gives a presentation for $A_6$. Also denote $x=ac=(1,5)(2,3)$, $y=cd=(1,6)(2,5)$, and $z=ab=(1,5)(2,3)$.

Set $V= \langle x, x^a \rangle$, $s=x^{abcad}=(2,5)(4,6)$, then
$N_H(V)= V\langle c,s \rangle \simeq VS_3 \simeq S_4$ and let $V_1$ as in Lemma \ref{l:s4}. If $C_G(V)=V$, then by Lemma \ref{l:s4} $C_G(V_1)>V_1$. So, since all involutions of $H$ are conjugated, we may assume that $C_G(V)>V$.

By Lemma \ref{l:s4} either $G$ contains a subgroup isomorphic to $A_7$, or one of the following holds:

\vspace{1mm}

{\bf (1)} There is an involution $t \in \Gamma_2(G)\backslash H$, which centralizes $N_H(V)$, or in terms of relations $$\rho= \big \{ t^2,[c,t],[x,t],[x^a,t],[s,t]  \big \}.$$ Note that $xz \in \Gamma_3$, therefore the order of $tz$ is not $7$ by \cite[Lemma 2.2]{m2020e}. We prove that $\langle H,t \rangle \simeq S_6$, moreover $t \simeq (4,6)$ if we use natural embedding for $H$. The group $\langle H,t \rangle$ is a homomorphic image of  $G(j,i_1,i_2,i_3,i_4,i_5,i_6)$ defined as $$\big \langle a,b,c,d,t  \; \big | \; \alpha \cup \rho \cup \{(tz)^j,(at)^{i_1},(bt)^{i_2},(dt)^{i_3},(a^bdt)^{i_4},(abcdt)^{i_5},(a^bcdt)^{i_6}\}
\big \rangle,$$ where $x=ac$ and $s=x^{abcad}$. Computations show that for all $j\in \{4,5,6\}$ and $i_1,i_2,i_3,i_4,i_5,i_6 \in \{ 4,5,6,7 \}$ the order of $G(j,i_1,i_2,i_3,i_4,i_5,i_6)$ is finite and greater than $|A_6|$ only for 
$G(6,6,4,4,6,6,6)\simeq S_6$. Calculations in $S_6$ show that $t \simeq (4,6)$ as desired.

\vspace{1mm}

{\bf (2)} There is an involution $t \in \Gamma_2(G)\backslash H$ such that relations $$\sigma= \big \{t^2,[x,t],[x^a,t],(tc)^3 \big \}$$ hold. We prove that $\langle H,t \rangle \simeq L_3(4)$.

Note that $H_1=\langle x,y,t \rangle$ and $H_2=\langle
x^a,z,t \rangle$ satisfy the conditions of Lemma \ref{l:2and5}, so we plan to use the corresponding relations that define these subgroups:
$$\psi_1(i_1,i_2,i_3,i_4)=\{(yt)^{i_1},(xyt)^{i_2},((xyt)^2y)^{i_3},(x(yt)^2)^{i_4} \},$$
$$\psi_2(j_1,j_2,j_3,j_4)=\{(zt)^{j_1},(x^azt)^{j_2},((x^azt)^2z)^{j_3},(x^a(zt)^2)^{j_4} \}.$$

Therefore $\langle H,t \rangle$ is a homomorphic image of 
$$G(i_1,i_2,i_3,i_4,j_1,j_2,j_3,j_4) = \langle a,b,c,d,t \; \big | \; \alpha \cup \sigma \cup \psi_1 \cup \psi_2 \rangle. $$

By Lemma \ref{l:2and5} we may assume that $(i_1,i_2) \not = (6,6) \not = (j_1,j_2)$, and $i_1,i_2,i_3,i_4,j_1,j_2,j_3,j_4 \in \{4,5,6\}$. Computations show that for all such parameters index of $\langle a,b,c,d \rangle$ is finite; moreover, $\langle H,t \rangle$ is a homomorphic image of  $ L_3(4)$, $2.L_3(4))$ or $2^5:A_6$.
\end{proof}

We remark that $L_3(4)$ is defined by the relations $$\alpha \cup \sigma \cup \{ (at)^3, (bt)^5,(ct)^3,(dt)^3,(abt)^5 \}.$$

\begin{lemma}\label{l:s5} If $G$ contains a subgroup $H$ isomorphic to $S_5$, then $G$ contains a subgroup isomorphic to $S_6$ or $A_7$.
\end{lemma}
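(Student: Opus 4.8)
The plan is to imitate the proof of Lemma~\ref{l:a6}: fix a copy of $S_4$ inside $H\simeq S_5$, apply Lemma~\ref{l:s4} to it, and in each of the three resulting cases either exhibit a subgroup of $G$ isomorphic to $S_6$ or $A_7$, or reach a contradiction by coset enumeration. Identify $H$ with $\mathrm{Sym}\{1,\dots,5\}$, let $H_0=\mathrm{Sym}\{1,2,3,4\}\cong S_4$, $V=O_2(H_0)=\{e,(12)(34),(13)(24),(14)(23)\}$, $c=(1,2,3)\in\Gamma_3(H_0)$, $s=(2,3)\in\Gamma_2(H_0)$ so that $c^{s}=c^{-1}$, let $v=(14)(23)$ generate $C_V(s)$, put $V_1=\langle s,v\rangle=\langle(2,3),(1,4)\rangle$ and $S=VV_1$, a Sylow $2$-subgroup of $H$; note $N_H(V)=H_0$, $C_H(V)=V$, $C_H(V_1)=V_1$ and $C_H(H_0)=1$. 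Apply Lemma~\ref{l:s4} to $H_0$. If its conclusion already gives a subgroup of $G$ isomorphic to $A_7$ we are done, so assume one of cases (1)--(3) of that lemma holds.

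In case (2) the subgroup $\langle V,c\rangle=\mathrm{Alt}\{1,2,3,4\}\cong A_4$ lies in $\mathrm{Alt}\{1,\dots,5\}\cong A_5\le H$, so by the ``moreover'' part of Lemma~\ref{l:s4} either $\langle V,c\rangle$ is contained in a subgroup of $G$ isomorphic to $A_7$ (done), or $H_0$ normalizes a subgroup $\langle z\rangle$ of order $2$. In the second case $z\in C_G(H_0)$ and $z\notin H$ since $C_H(H_0)=1$; in particular $z$ centralizes the double transposition $x=(14)(23)\in V$, so, choosing a double transposition $y$ with $\langle x,y\rangle\cong D_{10}\le A_5\le H$, the subgroup $\langle x,y,z\rangle$ satisfies the hypotheses of Lemma~\ref{l:2and5}. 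One then coset-enumerates $\langle H,z\rangle$ using the relations $[z,H_0]=1$, the relations supplied by Lemma~\ref{l:2and5}, and the order bounds forced by $\omega(G)=\{1,\dots,7\}$; the surviving cases should give $\langle H,z\rangle\simeq S_6$ (the others collapsing $z$ into $H$, against $z\notin H$, or producing an inadmissible element order), so $G$ has a subgroup isomorphic to $S_6$.

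In case (3) Lemma~\ref{l:s4} provides a Klein four-group $W\le C_G(V)$ with $W\not\le H_0$, normalized by $H_0$ and with $c$ acting fixed-point-freely on $W$; an involution $t\in W$ then satisfies $t\notin H$ (as $C_H(V)=V\le H_0$), $[t,V]=1$, hence $(xt)^2=1$, and $(tc)^3=1$. As before $\langle x,y,t\rangle$ falls under Lemma~\ref{l:2and5}, and the coset enumeration of $\langle H,t\rangle$ with these relations and the $\omega$-bounds is expected to yield, in every case, either a subgroup isomorphic to $S_6$ or a collapse/inadmissible spectrum, i.e.\ a contradiction. In case (1) we have $C_G(V)=V$, and by Lemma~\ref{l:s4} $C_G(v)$ is an extension of a nontrivial elementary abelian $3$-group $R$ by $S$ with $[R,s]=1$; by Lemma~\ref{l:shunk} $R$ is infinite. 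An element of order $4$ of $S$ must act fixed-point-freely on $R$, otherwise $G$ has an element of order $12$; analysing the action of $S\simeq D_8$ on $R$ under this constraint forces $C_S(R)=V_1$, so $V_1R\le C_G(V_1)$ and there is $r\in R$ of order $3$ with $[r,V_1]=1$, $r\notin H$ (because $C_H(V_1)=V_1$). A coset enumeration of $\langle H,r\rangle$ with the relevant $\omega$-relators then finishes this case by contradiction.

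The main obstacle is case (1). For the parallel situation with $A_6$ (Lemma~\ref{l:a6}) one escapes it by replacing $V$ with the complementary Klein four-group $V_1$ and re-applying Lemma~\ref{l:s4}, which is legitimate because in $A_6$ all involutions are conjugate and $N_{A_6}(V_1)\cong S_4$; for $S_5$, however, $N_{S_5}(V_1)\cong D_8\not\cong S_4$, so Lemma~\ref{l:s4} is not available for $V_1$ and one has to work directly with the infinite $3$-group $R$ and then remove the new element $r$ of order $3$ by a separate enumeration. The other recurring difficulty is choosing enough consequences of $\omega(G)=\{1,\dots,7\}$ to adjoin as relators so that the coset enumerations terminate, and then inspecting the finitely many groups that emerge.
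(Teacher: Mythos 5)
Your proposal follows the paper's proof essentially step for step: both apply Lemma~\ref{l:s4} to the copy of $S_4$ normalizing the Klein four-group of double transpositions in $H$ and dispose of the three resulting cases by coset enumeration (case of a centralizing involution, case of the extra Klein group $W$ with $(tx)^3=1$, and the case $C_G(V)=V$ handled via an order-$3$ element of $R$ centralizing $V_1$). The only substantive discrepancies are that in case (1) the paper's enumeration of $\langle a,b,c,t\rangle$ does not end in a contradiction as you predict but in $J(5,7)\simeq A_7$ -- which is one of the allowed conclusions, so your plan still closes the case -- and that in cases (2) and (3) the paper enumerates $\langle H,t\rangle$ directly from the Coxeter presentation of $S_5$ plus ad hoc order relators rather than routing through Lemma~\ref{l:2and5}.
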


\begin{proof} Assume the contrary. We identify $H$ with $S_5$ and $a=(1,2)$, $b=(2,3)$, $c=(3,4)$, $d=(4,5)$. Then $a,b,c,d$ generate $S_5$ and satisfy the following identities: $$\beta=\big \{ a^2, b^2, c^2, d^2, (ab)^3, (ac)^2, (ad)^2, (bc)^3, (bd)^2, (cd)^3 \big \},$$
which provide a presentation for $S_5$. We also denote $x=ab$, $u=ac$, $v=u^x$, and $V=\langle u,v \rangle$. Then $N_H(V)=V \langle a,b \rangle \simeq VS_3 \simeq S_4$.
	
By Lemma \ref{l:s4} we need to consider three cases:

\vspace{1mm}	

{\bf (1)} There is $t \in \Gamma_3$ such that the relations of the following set $$\rho=\big \{ t^{3}, [t,a], [t,c], (t(abc)^2)^2 \big \}$$ hold in $G$. 

We consider the subgroup $K=\langle a,b,c \rangle \simeq S_{4}$ of $H$, then $K=\langle a,b,c \mid \beta^{\ast} \rangle$
where $\beta^{\ast}=\{ a^2, b^2, c^2, (ab)^3, (ac)^2, (bc)^3\}$. In this case $\langle K,t \rangle$ is a homomorphic image of $$J= J(i_1,i_2)=\big \langle a,b,c,t \; \big | \; \beta^{\ast} \cup \rho \cup \{(bt)^{i_1}, (abt)^{i_2}\} \, \big \rangle.$$
Computations show that $J(5,7) \simeq A_7$ and $|J(i_{1},i_{2})| \leq 24$ if $(i_1,i_2) \not =(5,7)$.

\vspace{1mm}

{\bf (2)} There is $t\in \Gamma_2$ such that $[t,V]=[t,x]=1$.
	
In this case $\langle H,t \rangle$ is a homomorphic image of the group $$G(i_1,i_2,i_3,i_4,i_5) = \big \langle a,b,c,d,t \mid \beta \cup \gamma(i_1,i_2,i_3,i_4,i_5) \big \rangle,$$ where $\gamma(i_1,i_2,i_3,i_4,i_5)$ is the set $$\{t^2,(tu)^2,(tv)^2,tt^x,(abct)^{i_1},(adt)^{i_2},(abcdt)^{i_3},((dt)^2c)^{i_4},(bcdt)^{i_5} \}$$ and $i_1,i_2,i_3,i_4,i_5 \in \{4,5,6,7\}$. Computations show that if $|G|>120$, then $$G( 4, 6, 6, 4, 5) \simeq S_{6} \;\;\;\, \mbox{ or } \;\; \; G(4, 6, 6, 5, 7) \simeq A_{8}.$$

\vspace{1mm}
	
{\bf (3)} There is $t\in \Gamma_2$ such that $[t,V]=1$ and $(tx)^3=1$, i.e. the following identities hold $$\sigma=\big \{t^2,(tu)^2,(tv)^2,(tx)^3 \big \}.$$
In this case $\langle H,t \rangle$ is a homomorphic image of $$\widehat{G}=G(j_1,j_2,i_1,i_2,i_3,i_4,i_5,i_6,i_7)$$ defined by $$ \big \langle a,b,c,d,t \; \big | \; \beta \cup \sigma \cup \delta_{1}(j_1,j_2) \cup \delta_{2}(i_1,i_2,i_3,i_4,i_5,i_6,i_7) \big \rangle,$$ where  $$\delta_{2}=\{(bt)^{i_1},(dt)^{i_2},(abct)^{i_3},(bdt)^{i_4},((bcd)^2t)^{i_5},(adt)^{i_6},(abcdt)^{i_7}\}$$ with $i_1,i_2,i_3,i_4,i_5,i_6,i_7 \in \{4,5,6,7\}$ and $$ \delta_{1}=\{(at)^{j_1},(ct)^{j_2} \}$$ with $j_{1},j_{2} \in \{4,6\}$, since $a,c \in C_G(u)$.

Computations show that $|\widehat{G}:\widehat{H}| \in \{1,2^4,2^5,2^{10},2^{11},2^{16}\}$ (and in cases where $|\widehat{G}:\widehat{H}| > 1$, then $\widehat{H} \simeq S_{5}$). So it follows that this case cannot happen.
\end{proof}

\begin{lemma}\label{l:s6} There are no subgroups isomorhpic to $S_6$ in $G$. 
\end{lemma}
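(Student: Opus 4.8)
Suppose, for contradiction, that $H\simeq S_6\le G$ and realize $H$ as the symmetric group on $\{1,\dots,6\}$. First note that the double transpositions of $H$ lie in $\Delta(G)$, since $(1234)^2=(13)(24)$ and $(1234)\in\Gamma_4(G)$. Fix the involution $a=(12)(34)\in\Delta$. Inside $H$ we have $C_H(a)=\langle(12),(34),(13)(24)\rangle\times\langle(56)\rangle\simeq D_8\times 2$, and for $V=\langle(12)(34),(13)(24)\rangle\simeq 2^2$ also $N_H(V)=S_{\{1,2,3,4\}}\times\langle(56)\rangle\simeq S_4\times 2$; in particular the transposition $(56)$ centralizes the $S_4$-subgroup $S_{\{1,2,3,4\}}$ of $H$. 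By Lemma \ref{l:shunk} the group $C_G(a)$ is infinite, so it strictly contains $C_H(a)$; moreover $C_G(a)$ is a $\{2,3\}$-group with $\omega(C_G(a))\subseteq\{1,2,3,4,6\}$, since an element of $C_G(a)$ of order $5$ or $7$ would commute with $a$ and hence produce an element of order $10$ or $14$, and $G$ has no element of order $8$, $9$ or $12$.

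The proof now runs along the lines of Lemmas \ref{l:a6} and \ref{l:s5}. One selects an element $g\in C_G(a)\setminus C_H(a)$ — after replacing it by a suitable power, of order $2$, $3$ or $4$ and commuting with $a$ — and uses $g$, together with the observation that $S_{\{1,2,3,4\}}$ already centralizes a $2$-element of $H$, to enlarge $H$. Writing $S_6$ by its Coxeter presentation on the transpositions $a_i=(i,i{+}1)$, adjoining $g$ with the relations forced by $[g,a]=1$ and by $\omega(G)=\{1,\dots,7\}$ (the latter imposed on a finite spanning set of words, using Lemma \ref{l:2and5} to restrict the order-at-most-$7$ possibilities for the involution/$5$-element pairs occurring inside the two conjugacy classes of $S_5$-subgroups of $H$), a coset enumeration in {\sc Gap} shows that $\langle H,g\rangle$ is a homomorphic image of finitely many explicitly presented groups, each of which either collapses or acquires an element of order $\ge 8$ — impossible in $G$. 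Should some branch instead produce a subgroup $K\simeq A_7$ of $G$ (an alternative that surfaces just as in Lemma \ref{l:a6} when Lemma \ref{l:s4} is applied), then $H\not\le K$ because $|S_6|\nmid|A_7|$, so $\langle H,K\rangle$ is strictly larger than $K$; amalgamating $H$ with $K$ over a common $A_4$- or $S_4$-subgroup and enumerating once more eliminates this case too. All possibilities being exhausted, $S_6\not\le G$.

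\medskip
\noindent\emph{Main obstacle.} In contrast to the $A_6$- and $S_5$-cases, Lemma \ref{l:s4} yields nothing here: every $S_4\le S_6$ centralizes an involution of $S_6$ (the transposition on its two fixed points, or, for a transitive $S_4$, the nontrivial element of $N_{S_4}(\text{point stabilizer})/(\text{point stabilizer})$), so alternative~(2) of that lemma is satisfied vacuously. The real work is thus to exploit the infinitude and $\{2,3\}$-structure of $C_G(a)$ to produce a genuine finite enlargement of $H$ in a shape the coset enumeration can digest, and to set up the (voluminous) enumerations so that they terminate over every parameter choice; the $A_7$-branch is the point that needs the most care.
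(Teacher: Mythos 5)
There is a genuine gap, and it is exactly the step you defer to as ``the real work.'' Your setup (double transpositions lie in $\Delta$, $C_G(a)$ is an infinite $\{2,3\}$-group of exponent dividing $12$) is fine, but the proof never happens: passing from ``there exists $g\in C_G(a)\setminus C_H(a)$'' to ``$\langle H,g\rangle$ is a homomorphic image of finitely many explicitly presented groups'' cannot be achieved merely by imposing $[g,a]=1$ together with order conditions on a finite spanning set of words. An arbitrary element of an infinite centralizer satisfies no a priori bounded set of relations with the Coxeter generators of $H$, so there is no finite parameter space over which to enumerate. What is required is a structural theorem about the centralizer that allows one to \emph{replace} $g$ by an element $t$ whose interaction with $C_H$ falls into a short explicit list. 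The paper obtains precisely this from \cite[Lemma 18]{lmmj2014e}: it works in the centralizer of the \emph{transposition} $e=(56)$ (so that $C_H(e)=S_{\{1,2,3,4\}}\times\langle e\rangle$), and that lemma shows $t\in C_G(e)\setminus C_H(e)$ may be chosen to satisfy one of six explicit relation sets $\pi_1,\dots,\pi_6$ tying $t$ to $a=(12)$, $b=(23)$, $c=(34)$ and $e$; only after that do the order conditions $\chi$ on seven further words in $t$ and $d=(45)$ yield a finite family of presentations, all of which coset-enumerate to finite groups. Your outline contains no analogue of this reduction, and your choice of base involution (a double transposition, whose centralizer in $H$ is only $D_8\times 2$, rather than a transposition, whose centralizer contains $S_4$) points away from the available structural input.

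A second, smaller problem is the $A_7$ escape branch. ``Amalgamating $H$ with $K\simeq A_7$ over a common $A_4$- or $S_4$-subgroup and enumerating once more'' is not a proof step: no presentation is specified, and it is not established that such a common subgroup with controlled mutual relations exists. In the paper no such branch arises in the proof of Lemma \ref{l:s6} --- the six cases close up directly --- and this matters for the logical order of the argument, since the elimination of $A_7$-subgroups (Lemma \ref{l:a7}) invokes Lemma \ref{l:s6} in its case (2.1) and therefore cannot be used, even implicitly, as an ingredient here.
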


\begin{proof}
Assume the contrary and identify a subgroup $H$ with $S_6$. Denote $a=(12)$, $b=(23)$, $c=(34)$,	$d=(45)$, $e=(56)$. Then the following identities hold:
$$\kappa=\kappa_{1} \cup \kappa_{2}$$ where $$\kappa_{1}=\{a^2,b^2,c^2,d^2,e^2\}$$ and $$\kappa_{2}=\{ (ab)^3,(ac)^2,(ad)^2,(ae)^2,(bc)^3,(bd)^2,(be)^2,
(cd)^3,(ce)^2,(de)^3 \}.$$

Analyzing the structure of $C_G(e)$, it is shown in \cite[Lemma 18]{lmmj2014e}
that it is possible to choose $t \in C_G(e) \backslash C_H(e)$ so that one of the following sets of relations hold:
$$\pi_1=\{t^3,(at)^2,(bt)^2,(ct)^2,[ab,t]\},$$
$$\pi_2=\{t^2,[a,t],[b,t],[c,t]\},$$
$$\pi_3=\{t^2,[a,t]e,[b,t]e,[c,t]e \},$$
$$\pi_4=\{t^2e,[a,t],[b,t],[c,t] \},$$
$$\pi_5=\{t^2e,[a,t]e,[b,t]e,[c,t] \},$$
$$\pi_6=\{(at)^2,(bt)^4,(ct)^4,(abt)^3,(act)^2,(bct)^3\}.$$

Also denote $$\chi=\{[e,t],(td)^{i_1},(tcd)^{i_2},(ted)^{i_3},(tad)^{i_4},(tbd)^{i_5},(tabd)^{i_6},(tbcd)^{i_7} \}.$$ Then $\langle H,t \rangle$ is a homomorphic image of a group $$\langle a,b,c,d,e,t \mid \kappa \cup \pi_{\ell} \cup \chi \rangle, \;\;\;\; \ell \in \{1,2,3,4,5,6\}.$$

Computations show that these groups are finite for all $\ell \in \{1,2,3,4,5,6\}$ and all  $i_1,i_2,i_3,i_4,i_5,i_6 , i_7 \in \{4,5,6,7\}$. Hence the lemma follows.
\end{proof}

In the following lemma we show that there is only one natural way to ``glue'' together two subgroups isomorphic to $A_6$ with a common subgroup isomorphic to $A_5$. In the following with $\sim$ we denote an isomorphism.

\begin{lemma}\label{l:a6+a6} Assume that $a \sim (1,2,3)$, $b \sim (1,2,4)$, $c \sim (1,2,5)$, $d \sim (1,2,6)$ are elements of $G$ generating a subgroup $H \sim A_6$.
Assume that $t \sim (1,2,6')$ is such that $\langle a,b,c,t \rangle$ is another subgroup of $G$ isomorphic to $A_6$ ($t \not \in H$). Then $\langle a,b,c,d,t \rangle \simeq A_7$ and we may assume that $t \sim (1,2,7)$.
\end{lemma}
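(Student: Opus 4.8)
The plan is to exhibit $L=\langle a,b,c,d,t\rangle$ as a homomorphic image of an explicit finitely presented group and then use coset enumeration, as in Lemmas~\ref{l:a6} and \ref{l:s5}. Under the stated identifications $a,b,c,d$ generate $H\simeq A_6$, hence satisfy the set $\alpha$ of relations from the proof of Lemma~\ref{l:a6}, which is a presentation of $A_6$; likewise $a,b,c,t$ generate a copy of $A_6$ and satisfy the analogue $\alpha'=\{a^3,b^3,c^3,t^3,(ab)^2,(ac)^2,(at)^2,(bc)^2,(bt)^2,(ct)^2\}$. Thus $L$ is a homomorphic image of $\Pi=\langle a,b,c,d,t\mid\alpha\cup\alpha'\rangle$. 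The set $\alpha\cup\alpha'$ asserts that $a,b,c,d,t$ all have order $3$ and that $(xy)^2=1$ for every pair of distinct generators except $\{d,t\}$. Note that in $\Pi$, and in each group $\Pi(m)$ introduced below, the subgroup $\langle a,b,c,d\rangle$ is a quotient of $A_6$ mapping onto $H\simeq A_6\le G$, hence is isomorphic to $A_6$, and similarly for $\langle a,b,c,t\rangle$.

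Since $\Pi$ is an amalgam of two copies of $A_6$ over a common $A_5$, the element $dt$ has infinite order in $\Pi$, whereas in $G$ its order $m$ lies in $\omega(A_7)=\{1,\dots,7\}$, and $m\ne1$ since $dt=1$ would force $t=d^{-1}\in H$, against $t\notin H$. Consequently $L$ is a homomorphic image of $\Pi(m)=\langle a,b,c,d,t\mid\alpha\cup\alpha'\cup\{(dt)^m\}\rangle$ for some $m\in\{2,3,4,5,6,7\}$. For $m=2$, the relations of $\Pi(2)$ say that all five generators have order $3$ and all ten products of distinct generators have order $2$; this is a classical presentation of $A_7$ (and is confirmed by coset enumeration), so $\Pi(2)\simeq A_7$. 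As $A_7$ is simple and $a$ has order $3$ in $G$, it follows that $L\simeq A_7$. One may choose the isomorphism $L\simeq A_7$ so that it restricts on $H$ to $a\sim(1,2,3)$, $b\sim(1,2,4)$, $c\sim(1,2,5)$, $d\sim(1,2,6)$, because an $A_6$ inside $A_7$ containing the subgroup $A_5=\langle a,b,c\rangle$ that fixes $\{6,7\}$ pointwise must be the stabilizer of $6$ or of $7$; since $t\notin H=\mathrm{stab}(7)$, this gives $\langle a,b,c,t\rangle=\mathrm{stab}(6)$ and $t\sim(1,2,7)$.

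It remains to exclude $m\in\{3,4,5,6,7\}$. For each such $m$ I would enumerate $\Pi(m)$ and conclude that $\Pi(m)$ is finite. Then $L$, a quotient of $\Pi(m)$ in which $\langle a,b,c,d\rangle\simeq A_6$ still embeds, is finite as well. If $L\simeq A_6$, then $L=H$ and $t\in H$, a contradiction; otherwise $L$ properly contains $A_6$ and $\omega(L)\subseteq\{1,\dots,7\}$. A finite group properly containing $A_6$ whose element orders all lie in $\{1,\dots,7\}$ is isomorphic to $S_6$ or $A_7$ (the groups $M_{10}$, $\mathrm{PGL}_2(9)$, the nontrivial direct products $A_6\times C_k$, and $S_7$, $A_8$, $2.A_7$, and so on, all have an element of order outside $\{1,\dots,7\}$). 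The possibility $L\simeq S_6$ is ruled out by Lemma~\ref{l:s6}; and $L\simeq A_7$ cannot occur for $m\ge3$, because in $A_7$ any five elements obeying $\alpha\cup\alpha'$ are conjugate, by the previous paragraph, to $(1,2,3),\dots,(1,2,7)$, for which $dt$ has order $2$. Hence no $m\ge3$ is possible, and the lemma follows.

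The main obstacle is the finiteness of $\Pi(m)$ for $m\in\{3,\dots,7\}$: these are amalgams of two copies of $A_6$ with one extra relation, and a priori one of them could be infinite, so the corresponding coset enumeration need not terminate. If some $\Pi(m)$ resists enumeration, the natural fix is to enlarge the relator set beforehand: the word $s\in\langle a,b\rangle\le H\cap\langle a,b,c,t\rangle$ representing the involution that inverts $d$ inside $H$ also inverts $c$ and, when computed inside $\langle a,b,c,t\rangle$, inverts $t$; hence $s$ normalizes $\langle c,d,t\rangle$ and $s^2=(sc)^2=(sd)^2=(st)^2=1$. Adjoining these relations to $\alpha\cup\alpha'$ should reduce the list of feasible $m$ to a very short one, after which only a couple of enumerations are needed.
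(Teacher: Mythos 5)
Your overall strategy is the paper's: glue the two Carmichael presentations of $A_6$ along the common $A_5=\langle a,b,c\rangle$, adjoin order relations forced by $\omega(G)$, and finish by coset enumeration. The identification of $\Pi(2)$ with $A_7$ and the argument that the isomorphism can be chosen so that $t\sim(1,2,7)$ are fine. But the proof has a genuine gap exactly where you flag it, and it is not a peripheral one: everything rests on the groups $\Pi(m)=\langle a,b,c,d,t\mid\alpha\cup\alpha'\cup\{(dt)^m\}\rangle$ being finite for $m\in\{3,\dots,7\}$, and you give no reason to believe this. The group $\langle a,b,c,d,t\mid\alpha\cup\alpha'\rangle$ is the nontrivial amalgam $A_6\ast_{A_5}A_6$, which is infinite, and a quotient of such an amalgam by a single extra relator is typically still infinite; there is no a priori finiteness, and ``the enumeration terminates'' is precisely the assertion that has to be engineered. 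The paper does not attempt this with one relator: it imposes bounds on the orders of \emph{five} words, $(td)^{i_1}$, $(abcdt)^{i_2}$, $[t,d]^{i_3}$, $(t^{-1}d)^{i_4}$, $(adt)^{i_5}$ (with exponents running over $\{4,5,6,7\}$ so as to cover all admissible orders by divisibility — note your use of the exact order $m$ should likewise be replaced by exponents in $\{4,5,6,7\}$, since e.g.\ $|dt|=2$ is subsumed by $(dt)^4=1$), and even then the case $i_5=7$ does not close directly: one must pass to the further quotient by $\langle((ad)^tad)^4\rangle$ justified by \cite[Lemma 2.1]{m2020e} before the index of $\langle a,b,c\rangle$ can be shown to divide $42$. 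So the sketched ``fix'' of adjoining relations for the inverting involution $s\in\langle a,b\rangle$ is in the right spirit, but until a concrete relator set with terminating enumerations is exhibited, the cases $m\ge 3$ are not eliminated and the lemma is not proved.

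A secondary weakness: even granting finiteness of $\Pi(m)$, you invoke the unproved classification ``a finite group properly containing $A_6$ with element orders in $\{1,\dots,7\}$ is $S_6$ or $A_7$.'' This is avoidable (once $\Pi(m)$ is computed one reads off its quotients in which $\langle a,b,c,d\rangle$ remains $A_6$ directly, which is what the paper does by checking $|K:\langle a,b,c\rangle|$ divides $42$), but as written it is an additional unsupported step; the parenthetical list of excluded groups is not a proof, and extensions such as $V{:}A_6$ for a suitable module $V$ would have to be ruled out by an argument, not by enumeration of familiar examples.
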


\begin{proof} Note that
$$a_1=bc^{-1}b^{-1}db^{-1}d^{-1} \simeq (4,5,6), a_2=bc^{-1}b^{-1}tb^{-1}t^{-1} \in C_G(a), $$
$$b_1=ad^{-1}a^{-1}ca^{-1}c^{-1} \simeq (3,6,5), b_2=at^{-1}a^{-1}ca^{-1}c^{-1} \in C_G(b), $$
$$c_1=ad^{-1}a^{-1}ba^{-1}b^{-1} \simeq (3,6,4), c_2=at^{-1}a^{-1}ba^{-1}b^{-1} \in C_G(c),$$
and therefore the following relations hold in $G$: $$\rho_c=\{(a_1a_2)^6,(b_1b_2)^6,(c_1c_2)^6 \}.$$
	
Let
$$\xi=\{ a^3,b^3,c^3, (ab)^2, (bc)^2, (ac)^2\},$$ and	
$$\mu_1=\{ d^3, (ad)^2,(bd)^2,(cd)^2 \},\;\;\; \mu_2=\{ t^3, (at)^2,(bt)^2,(ct)^2 \}.$$
	
We have $\langle a,b,c \mid \xi \rangle \simeq A_5$ and
$$ \big \langle a,b,c,d \mid \xi \cup \mu_1 \big \rangle \simeq A_{6} \simeq \big \langle a,b,c,t \mid \xi \cup \mu_2 \big \rangle. $$  Therefore $\langle a,b,c,d,t \rangle$ is a homomorphic image of 
$$K=K(i_1,i_2,i_3,i_4,i_5)= \big \langle a,b,c,d,t \mid \mu_1\cup \mu_2 \cup \xi_5 \cup \eta \,\big \rangle$$ where $$ \eta=\eta(i_1,i_2,i_3,i_4,i_5)=\{(td)^{i_1},(abcdt)^{i_2},[t,d]^{i_3},(t^{-1}d)^{i_4},(adt)^{i_5} \}.$$
	
Computations show that if $i_1,i_2,i_3,i_4 \in \{4,5,6,7\}$ and $i_5 \in \{4,5,6\}$, then $|K:\langle a,b,c \rangle|$ divides $42$. If $i_5=7$ then $\langle a,b,c,d,t \rangle$ is a homomorphic image of $$K(i_1,i_2,i_3,i_4,7) \big / \big \langle (ad)^tad)^4 \big \rangle^{K(i_1,i_2,i_3,i_4,7) }$$ by \cite[Lemma 2.1]{m2020e}, and computations show that for all $i_1,i_2,i_3,i_4 \in \{4,5,6,7\}$ we have $|K : \langle a,b,c \rangle|=1$.
\end{proof}

\begin{lemma}\label{l:a7} There are no subgroups isomorphic to $A_7$ in $G$.
\end{lemma}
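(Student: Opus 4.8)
The plan is to derive a contradiction from the assumption that $G$ contains a subgroup $H\simeq A_7$. First I would record some easy structural facts. Since $\Aut(A_7)=S_7$ has elements of order $10$ and $12$, neither lying in $\omega(A_7)$, no subgroup of $G$ is isomorphic to $S_7$; and if $c\in C_G(H)$ had order $n>1$, then choosing $h\in H$ of order $4$, $5$ or $7$ coprime to $n$, the commuting product $hc$ would have order $\lcm(|h|,n)\notin\{1,\dots,7\}$. Hence $C_G(H)=1$ and $N_G(H)=H$, so $H$ is not normal in $G$ (otherwise $G$ embeds into $\Aut(H)=S_7$ and is finite, against Lemma~\ref{l:nonlf}). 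Moreover, in $A_7$ every involution is the square of an element of order $4$ (e.g.\ $(13)(24)=((1234)(56))^2$) and all involutions of $A_7$ are conjugate, so $\Gamma_2(H)\subseteq\Delta(G)$; and by \cite{OCn} any finite subgroup of $G$ containing $H$ is isomorphic to $A_7$, hence equals $H$. So the contradiction will come from $G=H$ being finite, or, more directly, from exhibiting an element of forbidden order inside a finite subgroup.

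Fix an involution $a\in H$; then $|C_H(a)|=24$, while $C_G(a)$ is infinite by Lemma~\ref{l:shunk}, so $C_G(a)\supsetneq C_H(a)$. Inside $H$ choose a subgroup $K\simeq S_4$ with $a\in V:=O_2(K)$ (so $V\le C_H(a)$), and apply Lemma~\ref{l:s4} to $K$, keeping its notation $c,s,v,V_1,S$; here $S$ is a Sylow $2$-subgroup of $H$, so $S\simeq D_8$, $v$ generates $Z(S)$, $v\ne a$, and $v\in\Delta$. Exactly as in the proof of Lemma~\ref{l:a6}, using that all involutions of $H$ are conjugate, one may assume $C_G(V)>V$, which rules out alternative (1) of Lemma~\ref{l:s4}. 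So we are in alternative (2), where $K$ normalizes a nontrivial cyclic subgroup and, since $\langle V,c\rangle\simeq A_4$ lies inside a subgroup $\simeq A_5$ of $H$, either $K$ normalizes a subgroup of order $2$ or $\langle V,c\rangle$ lies inside a subgroup $H'\simeq A_7$ of $G$; or in alternative (3), where there is a four-group $W\le C_G(V)\le C_G(a)$ with $W\not\le K$, $K\le N_G(W)$, and $c$ acting on $W$ without fixed points.

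In each surviving case $C_G(a)$ contains an explicit small $p$-subgroup $X$ ($X=\langle z\rangle$ of order $2$ or $3$, or $X=W\simeq 2^2$) normalized by a known portion of $H$, and one can write down enough relations -- the normalization relations, $z^{|X|}=1$ (respectively $w^2=1$), the fixed-point-free action in case (3), and relations $(xg)^i=1$ with $i\in\{4,5,6,7\}$ as $x$ runs over generators of $H$ and $g$ over generators of $X$ -- so that $\langle H,X\rangle$ becomes a homomorphic image of a multiparameter finitely presented group. I would run coset enumeration on these presentations, as in Lemmas~\ref{l:a6}, \ref{l:s5}, \ref{l:s6} and \ref{l:a6+a6}, expecting each resulting group to be finite and either to contain an element of order $8$, $10$ or $12$ -- contradicting $\omega(G)=\omega(A_7)$ -- or to collapse onto $H$ (which cannot occur, since $X\not\le H$ up to conjugacy, and in the residual case $X\le H$ the statement becomes internal to $A_7$ and is settled by conjugacy of involutions). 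The only branch requiring care is $\langle V,c\rangle\le H'\simeq A_7$ with $H'\ne H$: here a point stabilizer of $H'$ containing $\langle V,c\rangle$ is an $A_6$ sharing an $A_5$ with a point stabilizer of $H$, so the rigidity statement Lemma~\ref{l:a6+a6} (two $A_6$'s sharing an $A_5$ generate an $A_7$) forces $H'=H$ and no new configuration arises. This gives the desired contradiction and the lemma.

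The hard part is the case analysis tied to Lemma~\ref{l:s4}: there are many branches, each a coset enumeration of a presentation with several exponent parameters, and one must be certain that alternative (1) is genuinely inadmissible for a suitably chosen $S_4\le H$. If it were not, one would be left with $C_G(Z(S))$ an extension of an infinite elementary abelian $3$-group $R$ by $S$ with $[R,s]=1$; the fallback would then be to show $R\le O_3(G)$ by the Baer--Suzuki analogue Lemma~\ref{l:bs3} and to reduce modulo $O_3(G)$ as in the proof of Statement~\ref{fsg}, landing on $\overline{G}\simeq L_3(4)$ (and $G$ locally finite by Schmidt's theorem), or on $\mu(\overline{G})\in\{\{4,5,7\},\omega(G)\}$, each eliminated as there -- in every case contradicting Lemma~\ref{l:nonlf}.
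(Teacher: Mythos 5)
Your overall framing (apply Lemma~\ref{l:s4} to an $S_4\le H$ and chase the alternatives by coset enumeration) is the paper's strategy, but the proposal breaks down at exactly the point the paper flags as the crux. You claim that ``exactly as in the proof of Lemma~\ref{l:a6}, using that all involutions of $H$ are conjugate, one may assume $C_G(V)>V$, which rules out alternative (1).'' That transfer fails for $A_7$: conjugacy of involutions does not conjugate the relevant four-groups, and in $A_7$ the two Klein four-subgroups of a Sylow $2$-subgroup $S$ are genuinely inequivalent --- for $U=\langle (12)(34),(13)(24)\rangle$ one has $C_{A_7}(U)=U\times\langle(567)\rangle$, while for $V=\langle (12)(34),(12)(67)\rangle$ one has $C_{A_7}(V)=V$. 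This is precisely why the trick from Lemma~\ref{l:a6} is unavailable, and the paper spends a substantial part of the proof on case (1): it uses Lemma~\ref{l:shunk} to get an infinite elementary abelian $3$-group $Q\le C_G(u)$, extracts $t\in Q\setminus H$ centralizing suitable generators, invokes the second clause of Lemma~\ref{l:s4}(2) to force $\langle a,b,c,t\rangle\simeq A_7$, and then eliminates the resulting presentation (whose only large quotient is $3^6{:}A_7$). Your fallback --- push $R$ into $O_3(G)$ via Lemma~\ref{l:bs3} --- cannot work either: that lemma requires any two conjugates of the $3$-element to generate a $3$-group, and the elements of $Q$ are conjugate to $3$-cycles of $H\simeq A_7$, two of which can generate $A_4$, $A_5$, etc. So alternative (1) is neither dismissible nor repairable along the lines you indicate; it needs the separate argument.

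The second gap is in the branch you call ``the only one requiring care.'' Lemma~\ref{l:a6+a6} does \emph{not} force $H'=H$; it only says that two $A_6$'s sharing an $A_5$ generate an $A_7$. The configuration of a second $A_7$ meeting $H$ in an $A_6$ is a genuine possibility and is the longest part of the paper's proof (case (2.3)): it requires identifying a carefully chosen involution $i$ inverting generators of both copies, excluding a $PSU(3,5)$ quotient that arises in the enumeration, and a further chain of relations (including one inserted solely to kill an element of order $12$) before the presentation collapses. More generally, your description of cases (2)--(3) as ``write down enough relations and expect finiteness or collapse'' underestimates what happens: the paper first manufactures an $A_6$ inside $H$, routes the analysis through Lemma~\ref{l:a6} (so the subcases are $S_6$, $L_3(4)$, or a second $A_7$, not a generic small $p$-group $X$), and each subcase needs bespoke intermediate relations before any enumeration terminates. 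As written, the proposal would not close either of these branches.
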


\begin{proof}
Assume the contrary and identify a subgroup $H$ with $A_7$. 

Let $u = (1,2)(3,4)$, $v = (1,3)(2,4)$, $w=(1,2)(6,7)$. Denote $U=\langle u,v \rangle$, $V=\langle u,w \rangle$ and use the same notations for isomorphic images in $A_7$. Then $S=UV$ is a Sylow 2-subgroup in $H$. Note that $$C_{A_7}(U)=U \times \langle (5,6,7) \rangle \mbox{ and } C_{A_7}(V)=V,$$
$$N_{A_7}(U)=S_4 \times \langle (5,6,7) \rangle \mbox{ and } N_{A_7}(V)=S_4,$$ here $x=(1,3,6)(2,4,7) \in N_{A_7}(V)$, $x^v=x^{-1}$, $u^x=(3,4)(6,7)=uw$.

Therefore even though both $U$ and $V$ are normal in the corresponding subgroups of $H$ isomorphic to $S_4$, applying Lemma \ref{l:s4} to them will yield different effect. This observation together with the fact that all involutions of $A_7$ are conjugated is the key to the proof.

We apply Lemma \ref{l:s4} to subgroup $K=\langle v,w,x \rangle \simeq S_4$ such that $O_2(K) = V$ and we consider the following cases.

\vspace{1mm}

{\bf (1)} $C_G(V)=V$, $S$ is a Sylow 2-subgroup of $G$, $C_G(u)$ is an extension of an elementary abelian 3-group $Q$ by $S$ and $[Q,v]=1$.

It is convenient to present $H$ using generators and relations as it appears in case 2 of Lemma \ref{l:s4}, when it is applied to a subgroup $\langle U, (1,2,3) \rangle$ isomorphic to $S_4$. These relations and the corresponding permutations are shown on the upper half of the diagram in {\sc Figure 2}.
\large
\begin{center}
	\begin{tikzcd} 
		&   & d \sim (5,6,7) \arrow[ld,dash, dashed] \arrow[rd,dash, dashed] \arrow[rrd,dash,"5"] \arrow[dd,dash,dashed] &  & &\\ 
		& a\sim (1,2,3) \arrow[rr,dash]  &  & b \sim (1,2,4)  \arrow[r,dash] & c\sim (1,2,5) &\\
		&   & t \sim (5,6,7)' \arrow[lu,dash, dashed] \arrow[ru,dash, dashed] \arrow[rru,dash,"5"] & && \\
	\end{tikzcd} 
	\\ \vspace{-4mm} {\sc Figure 2} \\
\end{center}
\normalsize
In other words we fix elements $a,b,c,d \in G$, which satisfy the following relations:
$$\rho_H=\{a^3,b^3,c^3,d^3,(ab)^2, (bc)^2, (ac)^2, [a,d], [b,d], (dc)^5, (abcd)^7\},$$ such that $H=\langle a,b,c,d \rangle \simeq A_7$ and let $u=[a,b]= (1,2)(3,4)$. Then $d \in Q$ and since $Q$ is infinite by Lemma \ref{l:shunk}, there is $t \in Q$ such that $t \not \in H$.

We may assume that $t$ centralizes $a$ and $b$. Indeed, $P=\langle a,b,t \rangle$ is a finite soluble group and $V \subseteq O_2(P)$. If $z \in Z(O_2(P))$, then $z \in C_G(U)$, and hence $z \in S$; also $z \in C_G(v)$, and hence $z \in V$. Therefore $V=O_2(P)$. If $O_3(P)=1$, then $t \in C_P(V) \subseteq V$, a contradiction. Let $1 \not = f \in O_3(P)$, then use a central element from a 3-subgroup $\langle f,a \rangle$ as $t$.

By Lemma \ref{l:s4} $\langle a,b,c,t \rangle \simeq A_7$, moreover $t$ is ``located'' in this group just like $d$ is ``located'' in $H$, hence the following relations are true:
$$\rho_t=\{t^3,[t,a],[t,b],(tc)^5, (abct)^7\}.$$
Note that $\langle a,b,c,d,t \rangle$ is a homomorphic image of $$G(i_1,i_2,i_3)=\langle a,b,c,d,t \; \big | \; \rho_H \cup \rho_t \cup \{ [d,t], (tdc)^{i_1}, (td^c)^{i_2}, (abcdt)^{i_3}\} \rangle, $$ where $i_1,i_2,i_3 \in \{4,5,6,7\}$. 

Computations show that $G(6,5,7) \simeq 3^6:A_7$, which is not possible, and $|G(i_1,i_2,i_3)| \leq |A_7|$ for the other values of parameters.

\vspace{1mm}

Case {\bf (2)} $C_G(V)>V$.
	
First we write the permutations mentioned above as words of generators: $$x=bac(dc)^2, v=a^{-1}b^{-1}, u=v^a, w=uu^x.$$
We proceed with introducing the notations to follow Lemma \ref{l:a6}: $$a_1=x, c_1=x^{vw}, b_1=x^{abcadc}, d_1=x^{abc^{-1}d^{-1}c}, x_1=a_1c_1, y_1=c_1d_1, z_1=a_1b_1,$$ and also let $$s=uv, r=[c,d].$$

Note that the following identities hold:
$$\sigma=\big \{a_1^3,b_1^3,c_1^3,d_1^3,(a_1b_1)^2,(a_1c_1)^2,(a_1d_1)^2,(b_1c_1)^2,(b_1d_1)^2,(c_1d_1)^2 \big \}.$$
Moreover, $x_1=a_1c_1=u$, $V=\langle u,u^{a_1} \rangle$, $s=uv=(1,4)(2,3)$, and $c_1^s=c_1^{-1}$. By Lemma \ref{l:a6} for $\langle a_1,b_1,c_1,d_1 \rangle \simeq A_6$ there is $t\in G$ such that one of the following subcases holds:

{\bf (2.1)}  $\langle a_1,b_1,c_1,d_1,t \rangle \simeq S_6$, and the conclusion follows from Lemma \ref{l:s6}.

{\bf (2.2)}  $\langle a_1,b_1,c_1,d_1,t \rangle \simeq L_3(4)$.

We note that $a,b \in \langle a_1,b_1,c_1,d_1 \rangle$ and there is an involution $$f \in \langle a_1,b_1,c_1,d_1,t \rangle \simeq L_3(4)$$ such that $[f,O_2(\langle a,b \rangle)]=1$ and $(af)^3=1$. In other words $\langle a,b,f \rangle \simeq 2^4:3$ and $\langle a,f \rangle \simeq A_4$. Then $fa^{-1}, a, d$ satisfy the conditions of \cite[Lemma 4.6]{m2020e}. Moreover, $\langle f,d \rangle $ is contained in the centralizer of an involution $ab$ and so has no elements of orders $5$ or $7$. Therefore $f \in O_2(\langle a,f,d \rangle)$ and $(ff^d)^4=1$.

Denote $e=(a^{-1})^{cbadb}\simeq (1,2,6)$ and $z=ab$. Applying Lemma \ref{l:a6} to $\langle a,b,c,e \rangle $ and $f$ we conclude that the following relations hold:  

$$\tau=\{[f,z],[f,z^a],(af)^3,(bf)^3,(cf)^3,(ef)^5,(aef)^5\}.$$

Computations show that the group
$$\langle a,b,c,d,f \mid \rho_H \cup \tau \cup \{(fd)^{i_1},(afd)^{i_2}, (afd^{-1})^{i_3} \}$$ is trivial for $i_1 \in \{4,5,6\}$ and $i_2,i_3 \in \{4,5,6,7\}$ unless $i_1=i_2=i_3=6$. We conclude that the following relations hold in $G$:
$$\chi=\{(fd)^6,(afd)^6, (afd^{-1})^6 \}.$$
Therefore $\langle a,f,d \rangle$ is a homomorphic image of $$K=\langle a,f,d \mid \{a^3,f^2,d^3,[a,d],(af)^3,(ff^d)^4\}\cup \chi \rangle$$ and computations show that $K$ is a finite group of order $2^{10} \cdot 3^2$. We may choose $f$ to be in $Z(O_2(\langle a,f,d \rangle))$, and assume that $$(fd)^3=(afd)^3=1 \;\;\; \mbox{ or } \;\;\; [f,d]=1.$$

Finally, $\langle a,b,c,d,f \rangle$ is a homomorphic image of $$\big \langle a,b,c,d,f \mid \rho_H \cup \tau \cup \mu_{\ell} \big \rangle, \;\;\;\;\;\; \ell \in \{1,2\},$$ where $\mu_{1}= \{ \{(fd)^3,(afd)^3\}$ and $\mu_{2}= \{ [f,d] \} \}$. Computations show that in both cases $|\langle a,b,c,d,f \rangle : \langle a,b,c,d \rangle|=1$.

{\bf (2.3)}  $\langle a_1,b_1,c_1,t \rangle \simeq A_7$. Let $f \in \langle a_1, b_1,c_1,t \rangle$ be an element that $\langle a_1, b_1,c_1,f \rangle \simeq A_6$ and the following identities hold $$\rho_f = \big \{ f^3,(a_1f)^2,(b_1f)^2, (c_1f)^2 \big \}.$$ By Lemma \ref{l:a6+a6} $\langle a_1, b_1,c_1,d_1,f \rangle \simeq A_7$ and we may assume that $(d_1f)^2=1$.

Let $\beta=\rho_H \cup \rho_f \cup \{(d_1f)^2\}$. Denote $S=\langle a_1, b_1, c_1, d_1 \rangle$. To distinguish isomorphisms we will denote them in the following way $H=\langle a,b,c,d \rangle \simeq_{\uparrow} A_7$ and $\langle S,f \rangle \simeq_{\downarrow} A_7$ so that $$a \simeq_{\uparrow} (1,2,3), b \simeq_{\uparrow} (1,2,4), c \simeq_{\uparrow} (1,2,5), d \simeq_{\uparrow} (5,6,7),$$ and $$a_1 \simeq_{\downarrow} (1,2,3), b_1 \simeq_{\downarrow} (1,2,4), c_1 \simeq_{\downarrow} (1,2,5), d_1 \simeq_{\downarrow} (1,2,6), f \simeq_{\downarrow} (1,2,7).$$ Let $i\simeq_{\uparrow}(1,2)(3,5)=a_1b_1^{(a_1b_1d_1^{-1}b_1^{-1}c_1d_1^{-1}c_1^{-1})} \simeq_{\downarrow} (1,2)(6,7)$. Then $i$ inverts $b_1,d_1,f$ $(\downarrow)$ and $ a,b,c,d $ $(\uparrow)$.

Computations (with enumerating cosets by $\langle a,b,c,d \rangle$) show that a group $$K(w)=\langle a,b,c,d,f \mid \beta \cup \{ w\} \rangle $$ is isomorphic to $PSU(3,5)$ if $w$ is the 4-th power of one of the words in $ W=\{cd,df,cf^{-1},df^{-1}\}$, and $K(w)$ is trivial if $w$ is the 5-th power of a word in $W$. It follows that in $G$ the order of words from $W$ divide $6$ or $7$. Also we have that index of $\langle a,b,c,d \rangle$ in $K((cif)^4)$, $K((dif)^4)$, $K((acif)^4)$, $K((adif)^4)$ is $1$.

We remark that $\langle c,i,f \rangle$ is a homomorphic image (with $ci \rightarrow a, i \rightarrow b, if \rightarrow c$) of  $K=K(j,i_1,i_2,i_3,i_4)$ with presentation $$ \big \langle a,b,c \; \big | \; \{a^2,b^2,c^2,(ac)^j, (abc)^{i_1},(c^{ab}c)^{i_2},(ba^c)^{i_3},(abcac)^{i_4}\} \big \rangle.$$ Also we remark that $i \cdot if \cdot ci \cdot if =fcf \sim cf^{-1}$. So $j,i_3 \in \{6,7\}$, $i_1 \in \{5,6,7\}$ and $i_2,i_4 \in \{4,5,6,7\}$. Computations show that $\langle c,i,f \rangle$ is a homomorphic image of $K(6,6,6,6)$ of order $2^7\cdot 3^3$, and $[c,f]^3=1$. Therefore the following identities hold in $G$: $$\rho_I=\{(cd)^6,(df)^6,(cf^{-1})^6(df^{-1})^6,[c,f]^3,[d,f]^3\}.$$

We then consider a subgroup of $G$ generated by $a,d,f$, and note that its structure is defined by the following relations:
\begin{itemize}
	\item[(i)] $a^3, \; d^3, \; [a,d]$. 
	\item[(ii)] $(af)^3, \; (af^{-1})^6, \; [a,x]$, where $x=(af^{-1})^2$, which hold in $K \simeq_{\downarrow} A_6$.
	\item[(iii)] $(fd)^6, \; (fd^{-1})^6, \; y^3$, where $y=[d,f]$, these relations come from $R_I$.
	\item[(iv)] $(adf)^6, \; (adf^{-1})^6, \; [ad,f]^3$. Since $[a,d]=1$ and $i$ inverts $a$ and $d$, then $i$ inverts $ad$ and we may apply arguments above to $ad$ instead of $d$.
	\item[(v)] $(xd)^3=1$, this relation hold in $\langle a,b,c,d \rangle \simeq_{\uparrow} A_7$ (here we use $x=a_1^{c_1d_1}\simeq_{\downarrow}(3,6,5)=(af^{-1})^2\simeq_{\uparrow}(1,2,3)(4,6,7))$.
	\item[(vi)] $((adf)^2fa)^6$ (this relation is added to eliminate an element of order 12).
\end{itemize}

Let $F$ be a group defined by relations (i) -- (vi). Then computations show that $|F|=2^{12}\cdot 3^4$. 

If $z=f^{-1}af^{-1}dfa^{-1}f$, then $z \in C_F (\langle a,d \rangle ) \setminus \langle a,d \rangle$ and $F/\langle z \rangle^{F} \simeq 3^2$. Therefore we may assume that $z \not =1$ and $z$ is an element of order 3. In the centralizer $C_G(d)$ of an element of order 3 there is a subgroup $\langle b,a,z \rangle$ with $b^3=a^3=z^3=(ba)^2=[a,z]=1$. By \cite[Lemma 4.6]{m2020e} $((ba)(ba)^z)^2=1$.
Computations show that $$ \big |\, \big \langle a,b,c,d,f \mid \beta \cup \rho_I \cup \{((ba)(ba)^z)^2\} \big \rangle : \big \langle a,b,c,d \big \rangle \, \big |= 1$$ and the proof is complete.
\end{proof}

We can summarize the results obtained in

\begin{prop}\label{p:s5}
There are not subgroups isomorphic to $S_5$ in $G$.
\end{prop}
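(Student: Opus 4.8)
The statement is a direct corollary of the chain of lemmas established in this paragraph together with the ``path'' of \textsc{Figure 1}. The plan is to argue by contradiction: suppose $G$ contains a subgroup $H_0 \simeq S_5$. First I would observe that inside $S_5$ there is an involution $a$ (a product of two transpositions) that inverts a $5$-cycle, and that $C_{H_0}(a)$ contains a second involution $t\neq a$; since $\Gamma_2(H_0)\subseteq\Delta$ by Statement \ref{fsg} and the opening remarks of this paragraph, $a\in\Delta$ inverts an element of order $5$, so Lemma \ref{l:noD10} (via \cite[Statement 2]{m2020e}) forces $G$ to contain a subgroup isomorphic to $A_6$, $L_3(4)$, or $S_5$ again. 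Combining this with Lemma \ref{l:s5}, which says that an $S_5$ in $G$ produces an $S_6$ or an $A_7$ in $G$, we get that $G$ must contain a subgroup isomorphic to one of $A_6$, $L_3(4)$, $S_6$, or $A_7$.

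Next I would close off each of these possibilities using the lemmas already proved. If $G$ has a subgroup $\simeq A_6$, then by Lemma \ref{l:a6} it has a subgroup $\simeq S_6$, $A_7$, or $L_3(4)$; so it suffices to rule out $L_3(4)$, $S_6$, and $A_7$. Subgroups isomorphic to $S_6$ are excluded by Lemma \ref{l:s6}, and subgroups isomorphic to $A_7$ are excluded by Lemma \ref{l:a7}. For $L_3(4)$: a subgroup $\simeq L_3(4)$ contains subgroups isomorphic to $A_6$ (and hence, chasing Lemma \ref{l:a6} again, to $S_6$, $A_7$, or $L_3(4)$), but more to the point one simply notes that if $G$ had a subgroup $\simeq L_3(4)$ then, combining with the earlier paragraphs, $\overline{G}=G/O_3(G)$ (or $G$ itself) would have $\mu=\{3,4,5,7\}$, whence $\overline{G}\simeq L_3(4)$ by \cite{l34e} and $G$ would be locally finite, contradicting Lemma \ref{l:nonlf}; alternatively an $L_3(4)$ visibly contains an $A_6$ and one invokes Lemma \ref{l:a6}. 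Since every branch of \textsc{Figure 1} starting from $A_5$ (through $S_5$) terminates either in a contradiction or in $A_7$/$S_6$, which are themselves impossible, no $S_5$ can occur.

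The only genuinely substantive content here is not in this wrap-up but in the lemmas it quotes: the real obstacle is Lemma \ref{l:a7}, whose proof requires the delicate case analysis on $C_G(V)$ via Lemma \ref{l:s4} and several heavy coset-enumeration computations (the groups $3^6{:}A_7$, $PSU(3,5)$, $3^2$-quotients, etc.). Granting those lemmas, the proof of Proposition \ref{p:s5} is essentially bookkeeping: follow each arrow of the diagram, replace every terminal vertex by the contradiction supplied by Lemmas \ref{l:s6} and \ref{l:a7}, and note that the $L_3(4)$ vertex leads (via $A_6$ and Lemma \ref{l:a6}, or directly via \cite{l34e} and Lemma \ref{l:nonlf}) back into the same forbidden set. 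I would present it as: ``By Lemmas \ref{l:noD10}, \ref{l:s5}, \ref{l:a6}, \ref{l:s6}, \ref{l:a7} and the discussion of \textsc{Figure 1}, the existence of a subgroup $\simeq S_5$ in $G$ forces the existence of a subgroup $\simeq A_7$ or $\simeq S_6$ in $G$, contradicting Lemma \ref{l:a7} or Lemma \ref{l:s6}.''
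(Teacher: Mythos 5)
Your closing sentence is exactly the paper's proof: Lemma \ref{l:s5} turns a subgroup $\simeq S_5$ into one $\simeq S_6$ or $\simeq A_7$, and Lemmas \ref{l:s6} and \ref{l:a7} forbid both; nothing else is needed, and the paper's own proof is precisely this one-line appeal to Lemmas \ref{l:D10}, \ref{l:a6}--\ref{l:a7}. Two points in the body of your write-up are off, however. First, Statement \ref{fsg} produces one particular $A_5$ or $L_2(7)$ with involutions in $\Delta$; it does not give $\Gamma_2(H_0)\subseteq\Delta$ for an arbitrary $S_5$-subgroup $H_0$. This is harmless here, because Lemma \ref{l:s5} applies to any $S_5$ and your detour through Lemma \ref{l:noD10} is unnecessary for this proposition. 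Second, and more substantively, your attempt to ``close off'' the $L_3(4)$ branch is both superfluous and incorrect: the $S_5$ vertex of {\sc Figure 1} does not lead to $L_3(4)$, and $L_3(4)$-subgroups are \emph{not} excluded at this stage --- on the contrary, Lemma \ref{l34} later shows that $G$ must contain one, and the paper's final contradiction is extracted from that subgroup in the last section. Your proposed exclusion via $\mu(\overline{G})=\{3,4,5,7\}$ is a non sequitur (the existence of an $L_3(4)$-subgroup says nothing about the spectrum of $G$ or of $G/O_3(G)$, which remains $\{4,5,6,7\}$), and the alternative route through Lemma \ref{l:a6} is circular, since that lemma's conclusion includes $L_3(4)$ as a possible outcome. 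If you strip those two digressions, what remains coincides with the paper's argument.
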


\begin{proof}
	
This is a corollary of Lemmas \ref{l:D10}, \ref{l:a6}--\ref{l:a7}.
\end{proof}

\end{section}

\begin{section}{$L_3(2)$-subgroups}
	
\begin{lemma}\label{l:s3 and comm} 
Let $x,y \in \Gamma_3, a \in \Gamma_2$ be such that $x^a=x^{-1}$ and $y^a=y$. Then $\langle x,y,a \rangle$ is a finite $\{2,3\}$-group and the following relations hold: $$ \chi_{1}(x,y)= \big \{(xy)^6,(yy^x)^3,(xx^y)^3 \big \}.$$ Moreover, $v=[x,y]^2 \in C_G(y)$, and $v^a=v^{-1}$.
\end{lemma}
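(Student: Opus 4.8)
The plan is to realise $\langle x,y,a\rangle$ as a homomorphic image of one of finitely many finitely presented groups, to evaluate those by coset enumeration, and to read the conclusion off the (finite) outcomes. First I would translate the hypotheses into relations: $x^{a}=x^{-1}$ gives $(ax)^{2}=1$, and $y^{a}=y$ gives $[a,y]=1$. Two structural remarks follow. Since $a$ inverts $x$ and fixes $y$, it normalises $\langle x,y\rangle$, so $|\langle x,y,a\rangle:\langle x,y\rangle|\le 2$ and it suffices to control $\langle x,y\rangle$; in particular the orders of $xy$, $yy^{x}$ and $xx^{y}$, all of which lie in $\langle x,y\rangle\le G$, divide $7$. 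And since $a$ commutes with $y$, it inverts every $\langle y\rangle$-conjugate of $x$, because $(x^{y^{\,i}})^{a}=(x^{a})^{(y^{\,i})^{a}}=(x^{y^{\,i}})^{-1}$; hence $(ax^{y})^{2}=(ax^{y^{-1}})^{2}=1$ as well.

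Consequently $\langle x,y,a\rangle$ is a homomorphic image of one of the groups
\[
K(i,j,k)=\big\langle x,y,a \;\big|\; x^{3},\,y^{3},\,a^{2},\,(ax)^{2},\,[a,y],\,(ax^{y})^{2},\,(ax^{y^{-1}})^{2},\,(xy)^{i},\,(yy^{x})^{j},\,(xx^{y})^{k}\big\rangle,
\]
where $i,j,k$ range over $\{4,5,6,7\}$ — this range is enough, since imposing $w^{m}=1$ for $m\in\{4,5,6,7\}$ covers every admissible order $m\in\{1,\dots,7\}$ of a word $w$. Coset enumeration should show that every $K(i,j,k)$ is finite. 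For each outcome I would then decide whether it can be isomorphic to $\langle x,y,a\rangle\le G$: it cannot if it has an element of order greater than $7$, or a subgroup $G$ is known not to contain — note that $[a,y]=1$ with $|y|=3$ already forbids quotients isomorphic to $A_{5}$ or $A_{6}$, whose involution centralisers have no element of order $3$, while $S_{5},S_{6},A_{7}$ are excluded by Proposition~\ref{p:s5} and Lemmas~\ref{l:s6},~\ref{l:a7}. What should remain is a short list of finite $\{2,3\}$-groups, and in each of them one checks directly that $(xy)^{6}=(yy^{x})^{3}=(xx^{y})^{3}=1$ and that $v=[x,y]^{2}$ centralises $y$ and satisfies $v^{a}=v^{-1}$; since $\langle x,y,a\rangle$ is a quotient of one of these, the lemma follows.

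I expect the main obstacle to be twofold. First, one must be sure the listed relations actually force each $K(i,j,k)$ to be finite and small enough for the enumeration to terminate: the relations $(ax^{y})^{2}=(ax^{y^{-1}})^{2}=1$, saying that $a$ inverts two further conjugates of $x$, are what should collapse the amalgam $\langle x,a\rangle\ast_{\langle a\rangle}\langle y,a\rangle$, and if they do not it should suffice to adjoin one more relation of the form $(ax^{c})^{2}=1$ for an appropriate conjugate $c$ of $x$. Second — the delicate point — one has to verify that the surviving quotients are exactly the benign ones: the cases where $xy$ has order $5$ should yield $S_{5}$-type groups (discarded via Proposition~\ref{p:s5}), and the cases where $xy$ has order $7$ should yield groups carrying an element of order exceeding $7$ or a forbidden simple section, so that in the end only $\{2,3\}$-groups obeying $\chi_{1}(x,y)$ are left, with $[x,y]^{2}$ behaving as asserted.
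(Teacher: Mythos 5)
Your overall strategy coincides with the paper's: realise $\langle x,y,a\rangle$ as a quotient of finitely many finitely presented groups, run coset enumeration, discard quotients containing forbidden subgroups or element orders, and read $\chi_1$ and the behaviour of $v=[x,y]^2$ off the survivors. The gap lies in the choice of presentation, which in an argument of this kind is the entire mathematical content. Your relations $(ax^{y})^{2}=(ax^{y^{-1}})^{2}=1$ are consequences of $\sigma=\{a^2,x^3,y^3,(ax)^2,[a,y]\}$ (since $y^a=y$ forces $(x^{y^{\pm1}})^a=(x^{y^{\pm1}})^{-1}$), so $K(i,j,k)$ amounts to $\sigma$ together with order conditions on just the three words $xy$, $yy^{x}$, $xx^{y}$. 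The paper instead constrains five words, $xy$, $xy^{-1}$, $axy$, $ax^{y}x$, $a(xy)^2$, and the last three, which involve $a$ essentially, carry information your presentation cannot reproduce: for instance, if $a\notin\langle x,y\rangle$ then $axy$ has even order at most $6$, so $(axy)^2=(xy)^a(xy)=x^{-1}yxy$, which is conjugate to $yy^{x^{-1}}$, has order at most $3$ --- your relations bound the order of $yy^{x}$ but say nothing about $yy^{x^{-1}}$. There is no reason to expect your $K(i,j,k)$ to be finite for all parameters: $K(7,7,7)$, say, is an extension by an involution of a quotient of the infinite hyperbolic $(3,3,7)$-triangle group by only two further relators, and even the paper's richer presentation still yields nontrivial quotients such as $L_2(11)$, $S_5$ and $A_7$ that must be excluded one by one (the first via its elements of order $11$, the others via Statement~\ref{p:s5} and Lemma~\ref{l:a7}). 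Your proposed repair --- adjoining $(ax^{c})^{2}=1$ for a further conjugate $c$ of $x$ --- gives nothing, because the only conjugates of $x$ that $a$ provably inverts are $x^{y^{\pm1}}$, and those relations are already implied by $\sigma$.

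Two smaller points. The orders of $xy$, $yy^{x}$, $xx^{y}$ do not ``divide $7$''; they lie in $\omega(G)=\{1,\dots,7\}$, which is what justifies letting the exponents range over $\{4,5,6,7\}$ (you clearly intend this). And when discarding a quotient such as $S_5$ you should note why $\langle x,y,a\rangle$ cannot be a \emph{proper} quotient of it --- here this is immediate because $x$ has order $3$ in $G$ while $S_5$ has no proper quotient of order divisible by $3$ --- but the check belongs in the argument.
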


\begin{proof} Indeed $\langle x,y,a \rangle$ is a homomorphic image of one of the group $$G(i_1,i_2,i_3,i_4,i_5)=\big \langle x,y,a \; \big | \; \sigma \cup \tau(i_1,i_2,i_3,i_4,i_5) \, \big \rangle$$ with  $$\sigma=\{ a^2,x^3,y^3,(ax)^2,[a,y] \} $$ and $$\tau(i_1,i_2,i_3,i_4,i_5)=\{ (xy)^{i_1},(xy^{-1})^{i_2},(axy)^{i_3},(ax^yx)^{i_4},(a(xy)^2)^{i_5} \}.$$

Computations show that $G(5,5,4,4,6) \simeq S_5$, $G(6,6,5,6,5) \simeq L_2(11)$, $G(7,7,5,7,7)\simeq A_7$, which is not possible by Statement \ref{p:s5}; $G(6,6,6,4,6)$ and its homomorphic images $G(6,6,6,6,6)\simeq G(6,6,6,2,6)$ satisfy lemma's conclusion. The order of $G(i_1,i_2,i_3,i_4,i_5)$ is not greater than 6 for other values of parameters $i_1,i_2,i_3,i_4,i_5 \in \{4,5,6,7\}$. 
\end{proof}

\begin{lemma}\label{l:f36} 
Let $t \in \Gamma_4, x \in \Gamma_3$ be such that $\langle t^2 ,x \rangle \simeq S_3$. Then $\langle t,x \rangle$ is isomorphic to $L_3(2)$ or $F_{36}$.
\end{lemma}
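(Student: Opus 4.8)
The plan is to present $H=\langle t,x\rangle$ as a homomorphic image of a handful of explicitly presented finite groups, pin those down by coset enumeration in {\sc Gap}, and first peel off by hand the part of the argument that produces $F_{36}$. Since $\langle t^2,x\rangle\simeq S_3$ we have $(t^2x)^2=1$, i.e. $x^{t^2}=x^{-1}$. Writing $y=x^t$, this gives $y^t=x^{t^2}=x^{-1}$, so the subgroup $N=\langle x,x^t\rangle$ is invariant under $t$ (and under $x$ and $y$), hence $N\trianglelefteq H$ and $H=N\langle t\rangle$. Also $t\notin\langle t^2,x\rangle$, an order-$4$ element having no room in $S_3$, so $|H|>6$. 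Finally $\omega(H)\subseteq\omega(G)=\{1,\dots,7\}$, and the relations $t^4=x^3=(t^2x)^2=1$ hold in all the groups considered below.

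\emph{The case $[x,x^t]=1$.} Here $N$ is elementary abelian of order dividing $9$. If $|N|\le3$ then $x^t\in\langle x\rangle$, so $x^{t^2}=x$; combined with $x^{t^2}=x^{-1}$ this gives $x^2=1$, impossible. Hence $|N|=9$ and $H$ is a homomorphic image of $F_{36}=\langle t,x\mid t^4,x^3,(t^2x)^2,[x,x^t]\rangle$. On $N\cong\mathbb F_3^{\,2}$ the element $t^2$ acts as $-I$ while $t$ has order $4$, so the minimal polynomial of $t$ is $T^2+1$, which is irreducible over $\mathbb F_3$; thus $N$ has no proper nontrivial $t$-invariant subgroup, and one checks that the only proper homomorphic images of $F_{36}$ are $C_4$, $C_2$ and $1$. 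None of these contains both an element of order $4$ and an element of order $3$, whereas $H$ does; therefore $H\simeq F_{36}$.

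\emph{The case $[x,x^t]\ne1$.} I would choose a short list of words $w_1,\dots,w_k$ in $t,x$ --- natural candidates are among $tx$, $txt^{-1}x$, $txtx^{-1}$, $[t,x]$, $[x,x^t]$ --- rich enough that the groups
$$\Gamma(i_1,\dots,i_k)=\big\langle\, t,x \ \big|\ t^4,\ x^3,\ (t^2x)^2,\ w_1^{i_1},\dots,w_k^{i_k}\,\big\rangle,\qquad i_1,\dots,i_k\in\{4,5,6,7\},$$
all have finite order computable by coset enumeration. Each $w_j$ has order in $\{1,\dots,7\}$ in $H$, and every such number divides some element of $\{4,5,6,7\}$, so $H$ is a homomorphic image of $\Gamma(i_1,\dots,i_k)$ for at least one tuple. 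Enumerating over all tuples, the outputs of order $>6$ with spectrum inside $\{1,\dots,7\}$ should turn out to be exactly $L_3(2)$ and $F_{36}$; every remaining output is of order $\le6$ (impossible, since $|H|>6$), contains an element of forbidden order, or is isomorphic to a group that cannot embed in $G$ by earlier results (such as $S_5$, $S_6$ or $A_7$, ruled out by Statement~\ref{p:s5} and Lemmas~\ref{l:s6} and~\ref{l:a7}). Since $L_3(2)$ is simple and $H\ne1$, a value $\Gamma(\cdot)\simeq L_3(2)$ forces $H\simeq L_3(2)$; a value $\Gamma(\cdot)\simeq F_{36}$ forces $H\simeq F_{36}$ exactly as in the previous case, because the image $H$ still carries an element of order $4$.

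\emph{Main obstacle.} The real difficulty is combinatorial rather than conceptual: one must pick the auxiliary words $w_j$ so that, simultaneously, every $\Gamma(i_1,\dots,i_k)$ is finite and its coset enumeration actually terminates, and so that the list of outputs is short and each entry is transparently either too small, or impossible by the spectrum condition or by the already-excluded sections of $G$, or one of $L_3(2)$, $F_{36}$ --- and in particular that none of these finite groups has a proper quotient of order $>6$ whose spectrum lies in $\{1,\dots,7\}$ other than $L_3(2)$ or $F_{36}$. Finding a small such list, rather than an unwieldy one, is where experimentation with {\sc Gap} is needed.
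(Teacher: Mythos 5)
Your strategy is the same as the paper's: bound the orders of a few words in $t,x$ by elements of $\{4,5,6,7\}$ and identify the resulting finitely presented groups by coset enumeration. The one step you leave open --- choosing the auxiliary words --- resolves in the simplest possible way: the paper uses the single word $tx$, so $\langle t,x\rangle$ is a homomorphic image of $G(i)=\langle t,x\mid t^4,x^3,(t^2x)^2,(tx)^i\rangle$ for some $i\in\{4,5,6,7\}$ (every order in $\{1,\dots,7\}$ divides one of $4,5,6,7$), and computation gives $G(4)\simeq F_{36}$, $G(5)=1$, $G(6)\simeq S_5$ (excluded by Statement~\ref{p:s5}, exactly as you anticipated), and $G(7)\simeq L_2(7)\simeq L_3(2)$. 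Your hand treatment of the case $[x,x^t]=1$ is correct and is a useful independent check that $F_{36}$ has no proper quotient containing elements of orders $3$ and $4$, which is also needed to pass from $G(4)$ to $\langle t,x\rangle$; the simplicity of $L_2(7)$ and the triviality of $G(5)$ dispose of the remaining cases just as you describe. So nothing in your outline would fail; it only needed the observation that $k=1$ and $w_1=tx$ already suffice.
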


\begin{proof} The group  $\langle t,x \rangle$ is a homomorphic image of $$G(i)= \big \langle t,x \mid t^4,x^3,(t^2x)^2, (tx)^i \big \rangle.$$
Computations show that $G(4) \simeq F_{36}$, $G(5) \simeq 1$, $G(6) \simeq S_5$, which is not possible by Statement \ref{p:s5}, and $G(7) \simeq L_2(7)$.
\end{proof}

\begin{lemma}\label{l:f36 and 2}  Assume that $b \in \Gamma_2$ inverts an element $t$ of order $4$ from a subgroup $H$ isomorphic to $F_{36}$. Then $K=\langle b,H \rangle$ is either a finite $\{2,3\}$-group, containing an element $w$ of order 3 such that the following relations hold: $$\chi_2(b,t,w)= \big \{[b,w],(bw^t)^2,(wt^2)^2 \big \},$$ or $K \simeq A_6$.
\end{lemma}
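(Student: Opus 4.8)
The plan is to realize $\langle b, H\rangle$ as a quotient of a finitely presented group obtained by adjoining $b$ to the presentation of $F_{36}$, and then resolve the few surviving quotients by computer algebra, exactly in the style of the preceding lemmas. Recall $F_{36} = \langle s, x \mid s^4, x^3, (s^2x)^2, [x, x^s]\rangle$ with $|F_{36}| = 36$, and inside it the element $t$ of order $4$ inverted by $b$ may be taken to be $s$ (all elements of order $4$ in $F_{36}$ are conjugate, so this is no loss). Thus $K = \langle b, s, x\rangle$ with the relations of $F_{36}$ together with $b^2$ and $s^b = s^{-1}$. The group $M = \langle b, s, x \mid s^4, x^3, (s^2x)^2, [x,x^s], b^2, (bs)^2\rangle$ surjects onto $K$, but $M$ is infinite (it is essentially $F_{36}$ extended by an involution acting with infinite-order defect on the normal closure of $x$), so I must impose the spectrum restriction $\omega(K) \subseteq \{1,2,3,4,5,6,7\}$ to cut it down. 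The natural finite collection of extra relators is: $(bx)^i$ and $(bsx)^j$ and similar short words in $b, s, x$, raised to powers in $\{4,5,6,7\}$ (and we may also throw in $(bx)^i = 1$ forcing the order of products of $b$ with a $3$-element, using Lemma~\ref{2+3-A4} to know it is $6$ or that $[b,x]$ has odd order — but in the present paragraph we are not assuming $p=3$, so I keep the full range). Running coset enumeration over $\langle s, x\rangle \cong F_{36}$ in {\sc Gap} for each choice of parameters, I expect most combinations to collapse $K$ down to order dividing $36$ (so $K = H$ up to the involution already inside, i.e. the ``$\{2,3\}$-group'' alternative, where the element $w$ of order $3$ with $[b,w]=1$, $(bw^s)^2=1$, $(ws^2)^2=1$ exists because $K$ is then soluble of $\{2,3\}$-type and one extracts $w$ from $O_3(K)$ centralizing $b$ as in the proof of Lemma~\ref{l:s3 and comm}), and to find exactly one sporadic nontrivial quotient isomorphic to $A_6$.

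**The steps**, in order, would be: (1) Identify $t$ with the standard order-$4$ generator $s$ of $F_{36}$ and write down the base presentation of $K$; justify the reduction to $t = s$ by conjugacy of order-$4$ elements in $F_{36}$. (2) Observe that $b$ normalizes $V = \langle x^{F_{36}}\rangle \cong 3^2 = O_3(F_{36})$ and acts on it; analyze the possible actions (the centralizer $C_V(b)$ is $1$, $3$, or all of $V$), which already splits the problem — if $C_V(b) \neq 1$ one gets a natural order-$3$ element $w$ commuting with $b$ and the $\{2,3\}$-group conclusion follows, while the faithful/fixed-point-free cases are the ones needing more work. (3) For the remaining cases set up the family $G(\vec\imath) = \langle b, s, x \mid s^4, x^3, (s^2x)^2, [x,x^s], b^2, (bs)^2, (bx)^{i_1}, (bsx)^{i_2}, (bs^2x)^{i_3}, \dots\rangle$ over $i_k \in \{4,5,6,7\}$, run coset enumeration relative to $\langle s,x\rangle$, and tabulate the outcomes. (4) Verify that among all finite outcomes with spectrum contained in $\omega(A_7)$, only $A_6$ and finite $\{2,3\}$-groups occur; discard the $\{2,3\}$-groups into the first alternative by locating $w \in O_3(K)$ with the claimed relations, and note that any would-be $A_7$, $S_5$, $S_6$, $L_3(4)$, $L_2(7)$, $L_2(11)$, etc. quotient is impossible by Statements~\ref{p:s5} and \ref{fsg} and Lemmas already proved. (5) In the $A_6$ case, simply record the isomorphism $K \cong A_6$ (which must then be checked to actually contain a copy of $F_{36}$ with $b$ inverting an order-$4$ element — it does, since $A_6$ has subgroups $3^2{:}4 \cong F_{36}$ and the normalizer structure supplies the inverting involution).

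**The main obstacle** is the same one that pervades the whole paper: guaranteeing that the chosen finite list of relators $(bx)^{i_1}, (bsx)^{i_2}, \ldots$ is rich enough that every surviving quotient $G(\vec\imath)$ is genuinely finite (so that coset enumeration terminates and the classification is complete), while not so rich that we accidentally kill the legitimate $A_6$ solution. Concretely, the danger is an infinite quotient slipping through because the chosen words do not generate enough of the group — one must argue, as in Lemma~\ref{l:s3 and comm}, that the displayed words already force local finiteness, typically because $b$ together with an order-$3$ element generates a subgroup of bounded order by Lemma~\ref{2+3-A4} (giving $(bx)^6 = 1$ and $[b,x]$ of order in $\{3,5,7\}$), and because the $2$-part is controlled by $(bs)^2 = 1$ and the $F_{36}$ relations. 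The bookkeeping — checking that no forbidden simple quotient (in particular no $A_7$, which would contradict Lemma~\ref{l:a7}) survives — is routine but voluminous, and the honest statement is that it is discharged by the {\sc Gap} computations referenced at the start of paragraph~2.
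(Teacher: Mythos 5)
Your overall scheme (present $K$ as a quotient of $F_{36}$ extended by an inverting involution, add spectrum relators, enumerate cosets) is the same as the paper's, but three of your steps do not survive scrutiny. The most serious is step (2): $b$ is only assumed to invert the single element $t\in H$; nothing in the hypotheses makes $b$ normalize $H$, let alone $O_3(H)\simeq 3^2$, so the trichotomy on $C_V(b)$ is vacuous and the subcase you dispose of there is not actually disposed of. (Relatedly, your expectation that the generic outcome is ``$K$ of order dividing $36$'' is wrong: the surviving finite $\{2,3\}$-quotient in the actual proof is $S=G(6,6,6,4,4,6,6)$ of order $2^3\cdot 3^6$, so the normal closure of the order-$3$ elements in $K$ can be far larger than $3^2$.) Second, the relator list is the entire content of the lemma, and you leave it unspecified: the paper needs the seven words $(aby)^{i_1}$, $(by)^{i_2}$, $(tby)^{i_3}$, $(t(by)^2)^{i_4}$, $(bybty)^{i_5}$, $(byy^t)^{i_6}$, $(abyy^t)^{i_7}$ (with $a=t^2$ and $y\in\Gamma_3(H)$) to force every parameter choice to a finite group; with only $(bx)^{i_1},(bsx)^{i_2},(bs^2x)^{i_3}$ ``and similar short words'' there is no reason the enumerations terminate, and you yourself name this as the main obstacle without overcoming it.

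Third, the first alternative of the lemma is not merely ``$K$ is a finite $\{2,3\}$-group'': it asserts the existence of a specific $w$ of order $3$ satisfying all three relations $[b,w]=(bw^t)^2=(wt^2)^2=1$, and it is exactly these relations that are consumed later in Lemma \ref{l:4 and 3}. Taking $w\in O_3(K)\cap C_K(b)$ ``as in Lemma \ref{l:s3 and comm}'' yields at best the first relation; the other two do not follow from solubility or from $w$ centralizing $b$. The paper's proof exhibits the concrete word $z=(yy^by)^t$, verifies computationally in $S$ that $z$ has order dividing $3$ and satisfies $\chi_2$, and, in the degenerate case $z=1$, passes to $S/\langle z\rangle^S$ where $w=y$ works. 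Your proposal contains no candidate for $w$ and no verification of the last two relations. (A minor further slip: the two classes of order-$4$ elements of $F_{36}$ are not fused, since $N_{F_{36}}(\langle t\rangle)=\langle t\rangle$; this happens to be harmless because $b$ inverts $t$ if and only if it inverts $t^{-1}$.)
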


\begin{proof} Let $y \in \Gamma_3(H)$, $a=t^2$ and $$\rho= \big \{t^4,b^2,y^3,(ay)^2,[y,y^t],(tb)^2 \big \}.$$ Note that $K$ is a homomorphic image of $$G(i_1,i_2,i_3,i_4,i_5,i_6, i_7)= \big \langle t,b,y \; \big | \; \rho \cup \kappa \big \rangle$$ where $$ \kappa= \big \{ (aby)^{i_1},(by)^{i_2},(tby)^{i_3},(t(by)^2)^{i_4},(bybty)^{i_5} ,(byy^t)^{i_6},(abyy^t)^{i_7} \big \}.$$ Computations show that $K$ is a homomorphic image of $$S=G(6, 6, 6, 4, 4, 6, 6),$$ and $|S|=2^3 \cdot 3^6$, since other nontrivial possibilities are $G(4,6,5,5,5,5,5) \simeq G(5,5,4,6,5,6,4) \simeq G(5, 5, 6, 5, 6, 4, 6) \simeq G(6,4,5,6,6,5,5) \simeq A_{6}$. 
	
The element $z=(yy^by)^t$ has order dividing 3 in the group $S$ and it satisfies relations $\chi_{2}$. Therefore if $z$ has order 3 in $K$ we can take  $w=z$. If $z=1$ in $K$, then $K$ is a homomorphic image of $\overline{S}=S/\langle z \rangle^{S}$ and the required set of relations hold in $\overline{S}$ for $w=y$. 
\end{proof}

\begin{lemma}\label{l:4 and 3} Let $H<G$ with $H \simeq L_{3}(2)$ and let $t\in \Gamma_{4}(H)$. If $t^2$ inverts $y \in \Gamma_3$, then $\langle t,y \rangle \simeq L_3(2)$.
	
\end{lemma}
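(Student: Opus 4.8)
The statement to prove is: if $H < G$ with $H \simeq L_3(2)$ and $t \in \Gamma_4(H)$ has the property that $t^2$ inverts some $y \in \Gamma_3(G)$, then $\langle t, y \rangle \simeq L_3(2)$. This is the natural next step after Lemma~\ref{l:f36}, where we classified what $\langle t, x \rangle$ can be when $t^2$ centralizes a Sylow structure; here we instead fix $t$ inside a known $L_3(2)$ and feed in an externally inverted element of order $3$. The approach is the standard one used throughout this paper: set up a finitely presented group capturing all relations forced by $t^4 = 1$, $y^3 = 1$, $(t^2 y)^2 = 1$ (which says $t^2$ inverts $y$), together with the spectrum constraint $\omega(G) = \{1,\dots,7\}$ applied to the finitely many two-generator subgroups one can write down, and then invoke coset enumeration in {\sc Gap}.

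\textbf{Key steps.} First I would record that $t^2 \in \Delta$ (as $t^2 = (t)^2$ with $t \in \Gamma_4$), so all the machinery of paragraph~5 — in particular that $t^2$ inverts no element of order $5$ (Lemma~\ref{l:D10}) and Lemma~\ref{l:3and3} on inverted $3$-elements — is available, though here the main tool is just that $\langle t, y\rangle$ is a homomorphic image of a presented group. Since $a := t^2$ inverts $y$, the subgroup $\langle a, y \rangle$ is already $\simeq S_3$, and $\langle t, y \rangle$ is generated by $t$ (order $4$) and $y$ (order $3$) subject to $t^4, y^3, (t^2 y)^2$. So $\langle t, y \rangle$ is a homomorphic image of $G(i) = \langle t, y \mid t^4, y^3, (t^2 y)^2, (ty)^i \rangle$ for $i \in \{1,2,\dots,7\}$ — but this is exactly the presentation $F_{36}$-family from Lemma~\ref{l:f36}! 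Indeed $G(4) \simeq F_{36}$, $G(5) = 1$, $G(6) \simeq S_5$, $G(7) \simeq L_3(2)$; and $i \le 3$ is easily excluded (the element $ty$ has order $\ge 4$ since otherwise $\langle t,y\rangle$ would be a small soluble group incompatible with $t$ having order exactly $4$ — one checks the resulting presentations collapse or violate the order of $t$). The cases $G(5) = 1$ and $G(6) \simeq S_5$ are impossible: the first because $t \ne 1$, the second by Statement~\ref{p:s5}. The case $G(4) \simeq F_{36}$ must then be ruled out using the extra hypothesis that $t$ lies in an $L_3(2)$: inside $H \simeq L_3(2)$ the element $t$ of order $4$ has a specific local structure (for instance, $t^2$ is an involution whose centralizer in $H$ is a dihedral group of order $8$, and $H$ has no elements of order $6$, so $\langle t \rangle$ is not centralized by any element of order $3$ in $H$), which I expect to be incompatible with $\langle t, y \rangle \simeq F_{36}$ after adjoining back the rest of $H$ — concretely, pick a generator $c \in \Gamma_3(H)$ with $\langle t, c \rangle = H$ and show, via a coset enumeration of $\langle t, y, c \mid \text{(relations of } H) \cup \text{(relations of } F_{36}) \cup \{(yc)^{i_1}, (ytc)^{i_2}, \dots\}\rangle$ with $i_j$ ranging over $\{4,5,6,7\}$, that every such group is finite with no admissible spectrum, forcing a contradiction. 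This leaves only $G(7) \simeq L_3(2)$.

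\textbf{Main obstacle.} The delicate point is eliminating the $F_{36}$ case: unlike $G(5)$ and $G(6)$, this is not killed by a one-line invocation of an earlier result, because $F_{36}$ is a $\{2,3\}$-group with $\omega(F_{36}) \subseteq \{1,2,3,4,6\} \subseteq \omega(A_7)$, so it is not intrinsically forbidden. The reason it cannot occur is precisely that $t$ was assumed to sit inside an $L_3(2)$, and I would exploit Lemma~\ref{l:f36 and 2} (which classifies what happens when an involution inverts an order-$4$ element of an $F_{36}$) together with the detailed generators-and-relations description of $L_3(2)$: choosing an involution $b \in H$ inverting $t$, Lemma~\ref{l:f36 and 2} gives either an $A_6$ (impossible: $A_6$ would contain $S_5$, contradicting Statement~\ref{p:s5}, and in any case is excluded by Lemma~\ref{l:a6} one step upstream) or a finite $\{2,3\}$-group with the relations $\chi_2(b,t,w)$; running the coset enumeration of the resulting presentation of $\langle H, w \rangle$ should then exhibit either an element of forbidden order or a forced isomorphism with a group already excluded. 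So the hard part is bookkeeping the interplay between the internal $L_3(2)$-relations on $t$ and the $F_{36}$-hypothesis on $\langle t, y\rangle$; the rest is routine {\sc Gap} coset enumeration of the sort appearing throughout the paper.
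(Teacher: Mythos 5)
Your proposal follows essentially the same route as the paper: reduce via Lemma~\ref{l:f36} to the case $\langle t,y\rangle\simeq F_{36}$ (the $S_5$ alternative dying by Statement~\ref{p:s5}), pick an involution $b\in H$ inverting $t$, invoke Lemma~\ref{l:f36 and 2} to obtain $w\in\Gamma_3$ satisfying $\chi_2(b,t,w)$, and finish by coset enumeration; the paper's only extra ingredient is to work inside an $S_4\leq H$ containing $t$ (writing $t=bxz$ with $x,z\in\Gamma_3$ inverted by $b$) and to adjoin the relations $\chi_1(x,w)$, $\chi_1(z,w)$ supplied by Lemma~\ref{l:s3 and comm}, which makes the enumeration collapse to $|G(i)|\le 24$ and forces $w=1$. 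One small slip: $A_6$ does not contain $S_5$, so that parenthetical justification is wrong, but the $A_6$ branch of Lemma~\ref{l:f36 and 2} is dismissed in the paper just as tacitly as in your sketch.
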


\begin{proof} Let $a=t^2$. Choose $K \leq H$ such that $t \in K \sim S_4$. Assume $t \sim (1,2,3,4)$, and take $b \sim (2,4)$, $x\sim (1,4,2)$, $z \sim (2,3,4)$. Then $t=bxz$ and the following relations defining $S_4$ hold: $$\chi_3=\big \{b^2,x^3,z^3,x^bx,z^bz,(xz)^2 \big \}.$$
	
By Lemma \ref{l:f36} we may assume $\langle t,y \rangle \simeq F_{36}$. Note that $b$ inverts $t$. By Lemma \ref{l:f36 and 2} there is $w \in \Gamma_3$ such that relations $\chi_2=\chi_2(b,t,w)$ hold. By Lemma \ref{l:s3 and comm} a subgroup $\langle b,x,z,w \rangle$ is a homomorphic image of $$G(i)=\big \langle b,x,z,w \; \big | \; \{w^3,(twz)^i\} \cup \chi_1(x,w) \cup \chi_1(z,w) \cup \chi_2 \cup \chi_3 \big \rangle.$$ Computations show that $|G(i)| \leq 24$ for any $i\in\{4,5,6,7\}$, and hence $w=1$, a contradiction. 
\end{proof}

\begin{prop}\label{cl32}
Let $a$ be an involution from a subgroup $W$, isomorphic to $L_3(2)$. Then $C_G(a)$ is a $2$-group. 
\end{prop}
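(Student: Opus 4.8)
The plan is to argue that $C = C_G(a)$ has no elements of order $3$, $5$ or $7$, and then to deduce it is a locally finite $2$-group using Shunkov-type arguments already available. Note first that since $a \in \Gamma_2(W)$ with $W \simeq L_3(2)$, all involutions of $W$ are conjugate in $W$ and lie in $\Delta(G)$ by the hypothesis carried through from Statement \ref{fsg}; in particular $a \in \Delta$. Also $a$ inverts an element of order $3$ inside $W$ (as $S_3 \leq L_3(2)$), and $a$ lies inside a subgroup of $W$ isomorphic to $S_4$, hence $a = t^2$ for some $t \in \Gamma_4(W)$.

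First I would rule out elements of order $5$ and $7$ in $C$. Since $\mu(G) = \{4,5,6,7\}$, an element $x \in C$ of order $5$ would give $\langle a \rangle \times \langle x \rangle$ of order $10$, and then (using that $a$ inverts an element of order $3$ from $W$) one produces a subgroup to which Lemma \ref{l:noD10} or the $A_5$/$A_6$/$L_3(4)$/$S_5$ analysis applies; but $G$ has no $S_5$ by Statement \ref{p:s5} and no $A_5,A_6,L_3(4),A_7$ (the last two give local finiteness via \cite{l34e} or Lemma \ref{l:a7}, contradicting Lemma \ref{l:nonlf}), so $5 \notin \omega(C)$. An element of order $7$ in $C$ together with $a$ would more directly contradict $\mu(G)$ unless it centralizes, and $14 \notin \omega(G)$ forces any order-$7$ element to not commute with $a$; combined with $W \simeq L_3(2)$ having $7 \in \omega$, one gets a subgroup generated by two order-$7$ elements or a $2,7$-configuration that is handled by the structure of $L_3(2)$ and the exclusion of the groups on Figure 1. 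So $C$ has no elements of order $5$ or $7$.

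The main work is to show $3 \notin \omega(C)$. Suppose $y \in \Gamma_3 \cap C$, so $[a,y]=1$. Using $a = t^2$ with $t \in \Gamma_4(W)$ and that $t^2$ already inverts an order-$3$ element inside $W$: the idea is to find, inside $W$ or $C_W(a)$, an element $s \in \Gamma_2$ inverting some $z \in \Gamma_3$ with $z^a = z$ is impossible to avoid, and then invoke Lemma \ref{l:s3 and comm}, Lemma \ref{l:4 and 3}, or Lemma \ref{l:3and3}. Concretely, $\langle a, y \rangle$ is abelian of order $6$; pairing $y$ with an involution $b \in W$ that inverts some order-$3$ element of $W$ and analyzing $\langle y, b, a \rangle$ via Lemma \ref{l:s3 and comm} produces an element $v = [x,y]^2 \in C_G(y)$ inverted by $a$, feeding into Lemma \ref{l:3and3} to push $v$ into $O_3(G)$; but then iterating and using Lemma \ref{l:4 and 3} (since $t^2 = a$ inverts order-$3$ elements) forces $\langle t, y \rangle \simeq L_3(2)$, which contains an element of order $4$ commuting with nothing dangerous, yet $12 \notin \omega(G)$ blocks $\langle a, y \rangle$ from extending compatibly — a contradiction. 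The hard part will be bookkeeping exactly which configuration of $a$, an involution of $W$, and $y$ is unavoidable so that one of Lemmas \ref{l:s3 and comm}, \ref{l:4 and 3}, \ref{l:f36 and 2} applies cleanly; I expect this to require one or two explicit coset-enumeration computations on groups presented by $\langle a, t, y \mid \ldots \rangle$ with the relations $t^4, y^3, (t^2y)$-type and the commutator bounds from Lemma \ref{l:s3 and comm}.

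Finally, once $C = C_G(a)$ has exponent dividing $4$ (no elements of order $3,5,6,7$, and order $4$ and $2$ allowed), $C$ is locally finite by Sanov's theorem \cite{sane}. If $C$ were infinite it would contain an infinite abelian subgroup by \cite{hallkulatilatika,kargapolove}, which must then be an infinite elementary abelian or exponent-$4$ abelian $2$-group — but actually we want the opposite conclusion: $C$ is a $2$-group, period, and this is exactly what "$C_G(a)$ is a $2$-group" asserts, so the statement is simply that $C$ has exponent a power of $2$, i.e. $\omega(C) \subseteq \{1,2,4\}$. Thus the whole proof reduces to the three exclusions above, with the order-$3$ exclusion being the crux.
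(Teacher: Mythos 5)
The exclusions of orders $5$ and $7$ from $C=C_G(a)$ are immediate: if $x\in C$ has order $5$ or $7$, then $ax$ has order $10$ or $14$, neither of which lies in $\omega(G)$. Your detour through Lemma \ref{l:noD10} and the subgroups of Figure 1 is therefore unnecessary, and in fact illegitimate: Lemma \ref{l:noD10}, like Lemmas \ref{2+3-A4}--\ref{l:O3}, is proved under the standing hypothesis of the paragraph establishing Statement \ref{fsg}, namely that $G$ has \emph{no} finite nonabelian simple subgroup with involutions in $\Delta$ --- a hypothesis that is false at this point, since $W\simeq L_3(2)$ is itself such a subgroup. The same objection defeats your appeal to Lemma \ref{l:3and3}: it is not available here, and where the paper needs its conclusion later it only ``repeats the arguments'' after re-establishing the ingredients from scratch.

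The real content of the Proposition is the exclusion of order $3$, and on this point your proposal does not contain a proof. You correctly identify the relevant tools (Lemma \ref{l:s3 and comm}, Lemma \ref{l:4 and 3}) and the general shape --- normalize the order-$3$ element $w\in C$ against a chosen order-$3$ element of $W$, then accumulate relations until a coset enumeration closes --- but the steps that make this work are exactly the ones you leave as ``bookkeeping,'' and they are not routine. The actual argument first reduces to $[x,w]=1$, where $x\in\Gamma_3(W)$ is inverted by $a=t^2$: one sets $v=[x,w]^2$, notes via Lemma \ref{l:s3 and comm} that $v\in C_G(w)$ and $v^a=v^{-1}$, and either replaces $x$ by $v$ (using Lemma \ref{l:4 and 3} to recover an $L_3(2)$ containing $t$ and $v$) or, if $v=1$, replaces $w$ by $xx^wxw^{-1}\in C_G(\langle a,x\rangle)$. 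One then shows that for the two order-$3$ elements $u,u^t$ of $W$ inverted by $a$ the group $\langle a,x,z,w\rangle$ is a quotient of $(A_4\times A_4){:}2$, which yields the additional relations $[z,w]^2=1$ and $x=z^{-1}wxwz^{-1}w$; only with these in hand does the final presentation of $\langle W,w\rangle$ collapse, the single surviving quotient being $A_8$, excluded because $15\notin\omega(G)$. Your closing claim that ``$12\notin\omega(G)$ blocks $\langle a,y\rangle$ from extending compatibly'' is not an argument, so as written the crux of the Proposition remains unproved.
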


\begin{proof}  Assume the contrary and take an element $w$ of order $3$ in $C_G(a)$.

Let $W=\langle t,x \mid \beta \rangle$, where $\beta=\{t^4,x^3,(t^2x)^2,(tx)^7\}$, and define $a=t^2$, $y=x^t$,  $u=xyxy^{-1}xyx$. Note that $a$ inverts the elements of the set $$\Sigma =\{u,u^t\} \subseteq \Gamma_{3},$$ and $(xu)^2=(xu^t)^2=1$. 

We may assume without loss of generality that $$[x,w]=1.$$ Indeed, consider $\langle a,x,w \rangle$, and let $v=[x,w]^2$. By Lemma \ref{l:s3 and comm} $[v,w]=1$ and $v^a=v^{-1}$. So if the order of $v$ is $3$, then by Lemma \ref{l:4 and 3} $\langle t,v \rangle \simeq L_3(2)$, and we may change $x$ to $v$ and $W$ to $\langle t,v \rangle$. If $v=1$, then change $w$ to $xx^wxw^{-1} \in C_G(\langle a,x \rangle)$.

If $z \in \Sigma$, then, by Lemma \ref{l:s3 and comm}, $\langle a,x,z,w \rangle$ is a homomorphic image of $$G(i)=\big \langle a,x,z,w \; \big | \; \gamma  \cup \chi_1(z,w) \cup \{(axwz)^{i}\} \big \rangle$$  were $$ \gamma=\{a^2,x^3,z^3,w^3,(xz)^2,(ax)^2,(az)^2,[a,w],[x,w] \}.$$ Computations show that $|G(i)| \leq 24$ for $i \not = 6$, and $G(6) \simeq (A_4 \times A_4):2$. It is straightforward to check that $[z,w]^2=1$ and $x=z^{-1}wxwz^{-1}w$. 

Let $v=u^{t}$, then $\langle W,w \rangle$ is a homomorphic image of $$ G(i_1,i_2,i_3)=\big \langle t,x,w \mid B \cup \delta_{1} \cup \delta_{2} \cup \epsilon(i_1,i_2,i_3) \big \rangle$$ where $$ \delta_{1}=\{w^3,[a,w],(uw)^6,[u,w]^2,(vw)^6,[v,w]^2, [x,w]\},$$ $$ \delta_{2}=\{x^{-1}u^{-1}wuwu^{-1}w,x^{-1}v^{-1}wvwv^{-1}w\},$$ and $$\epsilon(i_1,i_2,i_3)=\{ (v^uw)^{i_1},(av^uw)^{i_2},(auv^{-1}w)^{i_3} \}.$$ Computations show that $G(7,4,7) \simeq A_8$, which is not possible. Since the order of $G(i_1,i_2,i_3)$ divides $3$ for other parameters $i_1,i_2,i_3 \in \{4,5,6,7\}$, we have reached a contradiction.
\end{proof}

\begin{lemma}\label{l32+} $G$ has a subgroup isomorphic to $2^3:L_3(2)$ or $L_3(4)$.
\end{lemma}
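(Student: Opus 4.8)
The plan is first to reduce to a single configuration. By Statement~\ref{fsg}, $G$ has a subgroup $H$ isomorphic to $A_5$ or to $L_2(7)\simeq L_3(2)$ with $\Gamma_2(H)\subseteq\Delta$. If $H\simeq A_5$, then, as recalled at the beginning of this section (via Lemma~\ref{l:noD10}), $G$ contains a subgroup isomorphic to $A_6$, $L_3(4)$ or $S_5$; the case $S_5$ is impossible by Statement~\ref{p:s5}, while by Lemma~\ref{l:a6} together with Lemmas~\ref{l:s6} and~\ref{l:a7} a subgroup isomorphic to $A_6$ forces a subgroup isomorphic to $L_3(4)$. Hence in every case $G$ contains a subgroup isomorphic to $L_3(4)$ or to $L_3(2)$. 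If it contains $L_3(4)$ we are done, so from now on I assume $G$ has a subgroup $W\simeq L_3(2)$ and no subgroup isomorphic to $L_3(4)$. Write $W=\langle t,x\mid t^4,x^3,(t^2x)^2,(tx)^7\rangle$ and set $a=t^2$; then $a\in\Gamma_2(W)\subseteq\Delta$ and $C_W(a)$ is a dihedral Sylow $2$-subgroup of $W$ of order $8$ containing $t$.

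Next I would exploit the involution centralizer of $a$. By Proposition~\ref{cl32} the group $C=C_G(a)$ is a $2$-group, by Lemma~\ref{l:shunk} it is infinite, and since $8\notin\omega(G)$ and $C$ is locally finite \cite{sane}, $C$ is an infinite locally finite $2$-group of exponent dividing~$4$; by \cite{hallkulatilatika} it has an infinite abelian subgroup and hence infinitely many involutions, so, $C\cap W=C_W(a)$ being finite, I may fix an involution $s\in C\setminus W$. Because $s$ and every element of $C_W(a)$ lie in the $2$-group $C$ of exponent~$4$, the subgroup $\langle s, C_W(a)\rangle$ is a finite $2$-group of exponent at most~$4$; in particular $s^2=[s,a]=1$ and every product of $s$ with an element of $C_W(a)$ has order dividing~$4$. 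Moreover $\langle s,x\rangle$ is generated by an involution and an element of order~$3$ and is therefore one of the groups of \cite[Lemma~2.1]{m2020e}, which forces the order of $sx$ (and of a few related words such as $(sx)^2x^{\pm1}$) into a short list of values.

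I would then present $\langle W,s\rangle$ as a homomorphic image of a finite family of finitely presented groups
$$
G(i_1,i_2,\dots)=\big\langle\, t,x,s \;\big|\; t^4,\;x^3,\;(t^2x)^2,\;(tx)^7,\;s^2,\;[s,t^2],\;\pi_2,\;(sx)^{i_1},\;((sx)^2x)^{i_2},\dots\,\big\rangle,
$$
where $\pi_2$ collects the relations asserting that $s$ together with $C_W(a)$ generates a $2$-group of exponent~$4$ and the exponents $i_1,i_2,\dots$ range over $\{4,5,6,7\}$, and run coset enumeration over $\langle t,x\rangle\simeq W$. I expect all these groups to be finite; after discarding the parameter values that yield a group with $8\in\omega$ or $14\in\omega$ (impossible in $G$), or a subgroup isomorphic to $S_5$, $A_6$, $S_6$ or $A_7$ (excluded by Statement~\ref{p:s5} and Lemmas~\ref{l:a6}--\ref{l:a7}), or a subgroup isomorphic to $L_3(4)$ (which would finish the proof directly), or that collapse $s$ into $W$, the remaining possibility should be $\langle W,s\rangle\simeq 2^3:L_3(2)$, recognised as the split extension in which $\langle s^{W}\rangle$ is the natural $3$-dimensional $\mathbb{F}_2 W$-module (a larger $W$-module on which a regular unipotent of $W$ acts with a Jordan block of size~$4$ would create an element of order~$8$). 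This provides the required subgroup.

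The main obstacle is the third step. A single extra involution $s$ of $C_G(a)$ carries only the relations forced by ``$C_G(a)$ is a $2$-group of exponent~$4$'' and by the $(2,3)$-structure of $\langle s,x\rangle$, and it must be checked that these genuinely make $\langle W,s\rangle$ finite and pin the normal closure $\langle s^W\rangle$ down to the natural module; ruling out, for instance, a $6$-dimensional $W$-module quotient (which would give $2^6:L_3(2)$) by exhibiting an element of order~$8$ in it is precisely the kind of finite computation that the coset enumerations are meant to automate. If one involution leaves too much freedom, the fix is to adjoin a second $2$-element of $C_G(a)$, realising inside $C$ the full centralizer $2^2:D_8$ of $a$ in $2^3:L_3(2)$, and to repeat the enumeration with the correspondingly larger list of admissible outcomes.
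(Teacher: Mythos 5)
Your opening reduction (to the case where $G$ contains $W\simeq L_3(2)$ and, one may assume, no subgroup $L_3(4)$) is correct and is exactly how the paper's proof begins. The gap is in the main step. The only relations you can legitimately impose on your auxiliary involution $s\in C_G(a)\setminus W$, where $a=t^2$, are $s^2=[s,a]=1$, exponent-$4$ conditions inside the $2$-group $C_G(a)$ (fourth powers of words in $s$ and $C_W(a)$), and the constraints on the $(2,3)$-generated subgroup $\langle s,x\rangle$ coming from \cite[Lemma 2.1]{m2020e}. These tie $s$ to a single involution of $W$, and to an element of order $3$ only through a short list of possible isomorphism types of $\langle s,x\rangle$; there are no long relations coupling $s$ simultaneously to a $2$-element and a $3$-element of $W$, which is what a coset enumeration over $\langle t,x\rangle$ needs in order to close up. A presentation of the shape you write down is essentially a quotient of an amalgam of $\langle s,a\rangle$, $\langle s,x\rangle$ and $W$, and there is no reason for it to be finite, let alone to collapse onto $2^3{:}L_3(2)$. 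Your own ``main obstacle'' paragraph concedes the point, but the proposed fix (adjoining further $2$-elements of $C_G(a)$ to realize $2^2{:}D_8$) only supplies more relations of the same kind and still none involving elements of order $3$ nontrivially.

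The paper avoids this by a different choice of auxiliary element: it applies Lemma \ref{l:s4} to an $S_4$-subgroup $\langle a,c\rangle$ of $W$ with $V=O_2(S_4)$, after using the conjugacy of involutions in $L_3(2)$ to reduce to the case $C_G(V)>V$. Lemma \ref{l:s4} (with Lemma \ref{l:a7} and Statement \ref{cl32} eliminating the other branches) produces an involution $w\notin H$ that either centralizes all of $\langle a,c\rangle\simeq S_4$, or lies in a four-group normalized by $\langle a,c\rangle$ on which $c$ acts fixed point freely, so that $(cw)^3=1$. It is precisely these relations between $w$ and the order-$3$ element $c$, together with the exclusion of order $7$ in $\langle v^c,a^x,w\rangle$ via \cite[Lemma 2.2]{m2020e}, that make the enumerations terminate in $2^3{:}L_3(2)$, $2^6{:}L_3(2)$ or $L_3(4)$. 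Without an analogue of Lemma \ref{l:s4} supplying such relations, the computation you propose has no realistic chance of succeeding, so the proof as written does not go through.
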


\begin{proof} By Statement \ref{fsg}, $G$ has a subgroup $H$ isomorphic to $A_5$ or $L_2(7)$ such that $\Gamma_2(H) \subseteq \Delta$. In the first case by Lemmas \ref{l:D10}, \ref{l:a7} and Statement \ref{p:s5} $G$ has a subgroup isomorphic to $L_3(4)$. So we further assume that $G \geq H \simeq L_3(2)$ and identify $H$ with $\langle a,x \mid \rho \rangle$, where $$\rho = \{ a^2,x^3,(ax)^7,[a,x]^4 \}.$$

Let $c=x^t$, then $\langle a,c \rangle \simeq S_4$ with $a\simeq (1,2)$ and $c\simeq (2,3,4)$. Denote $v=(ac)^2, s=a^{ca}$ so that $V = \langle v,v^c \rangle \simeq O_2(S_4)$. Then $N_H(V) = \langle a,c \rangle$. If $C_G(V)=V$ then by Lemma \ref{l:s4} $C_G(V_1) > V_1$. Since all involutions of $H$ are conjugated, we may assume that $C_G(V) >V$. By Lemma \ref{l:s4} mod Lemma \ref{l:a7} and Statement \ref{cl32} there is an involution $w \in G \setminus H$ such that one of the following holds:

\vspace{1mm}

{\bf (1)} $\langle a,c \rangle$ centralizes $w$.

Then $\langle a,x,w \rangle$ is a homomorphic image of $$G(i_1,i_2,i_3)=\big \langle a,x,w  \; \big | \; \rho \cup \{w^2,[w,a],[w,c],(xw)^{i_1},[x,w]^{i_2},(axw)^{i_3}\} \big\rangle.$$
Computations show that $G(6,4,7) \simeq G(6,6,7) \simeq V:L_3(2)$, where $|V| =2^6$, and $|G(i_1,i_2,i_3)| \leq 168$ for other parameters. The largest homomorphic image of $G(6,4,7)$ without elements of order $8$ is $2^3:L_3(2)$.

{\bf (2)} There exists a subgroup $W \simeq C_{2} \times C_{2}$ such that $w \in W$, $W \leq C$,  $W \not \leq H$, $H \leq N_G(W)$ and $c$ acts on $W$	fixed point freely. Therefore the following relations hold $$\sigma=\big \{w^2,[w,v],[w,v^c],(cw)^3,[a,w] \big \},$$ so that $\langle a,c,w \mid \sigma \rangle \simeq 2^4:S_3$. Note that $|v^c \cdot a^x | = 3$ in $H$. So by \cite[Lemma 2.2]{m2020e} $\langle v^c,a^x,w \rangle $ has no elements of order $7$. It follows that $\langle a,x,w \rangle$ is a homomorphic image of $$G(i_1,i_2,j)=\big \langle a,x,w \; \big | \;  \rho \cup \sigma \cup \{(axw)^{i_1},(xw)^{i_2},(wv^ca^x)^j \} \big \rangle,$$
where $i_1,i_2,j \in \{4,5,6,7\}$, and $j \not =7$. Computations show that $G(5,7,5) \simeq L_3(4)$, $G(4,7,4) \simeq 2^3:L_3(2)$, and $G(7,6,6) \simeq 2^6:L_3(2)$; while for other parameters we have $|G(i_1,i_2,i_3):\langle a,x \rangle|=1$.
\end{proof}

For any subset $M$ of $G$ we define $$M^+ =\big \{x \in M \; | \; \exists H < G \text{ such that } x \in H \simeq L_3(2) \big \}$$ and $M^-=M\backslash M^+$. 

\begin{lemma}\label{noF42}
 $G$ has no subgroups isomorphic to $F_{42}$. 
\end{lemma}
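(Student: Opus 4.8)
The plan is to argue by contradiction. Suppose $G$ has a subgroup $H\simeq F_{42}$, say $H=\langle x,z\mid\rho_{42}\rangle$ with $b=z^xz$ of order $7$ and $b^x=b^4$. Recall from Lemma~\ref{l:noF42} that $G$ has no $F_{42}$-subgroup with $\Gamma_2(H)\subseteq\Delta(G)$, so necessarily the involutions of $H$ lie outside $\Delta$; since all involutions of $F_{42}$ are conjugate, $z\in\Gamma_2^-(G)$, and the same holds for the involution $u=z^{x^2z}\in C_G(x)$. The element $w=xz$ has order $6$, so $w^3$ is another involution of $H$, conjugate to $z$, hence also outside $\Delta$. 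The key tension I want to exploit is that $H$ supplies an element of order $3$, namely $x$, with a nontrivial commuting involution-free structure on one side, while on the other side the involutions of $H$ are ``$\Delta$-free,'' and by the accumulated machinery of paragraphs~5 and 7 (Statements~\ref{fsg}, \ref{p:s5}, \ref{cl32} and Lemma~\ref{l32+}) the $L_3(2)$-subgroups of $G$ have $2$-group centralizers of their involutions, which will clash with $C_H(u)\ni x$.

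First I would pin down $C_G(x)$. The subgroup $\langle x,u\rangle$ of $H$ is elementary abelian of order $9$ lying in $C_G(x)$. Arguing exactly as in item~(1) of the proof of Lemma~\ref{l:noF42}: if $y\in\Gamma_3\setminus\langle x\rangle$ centralizes $x$, then (using \cite[Lemma 3.3]{m2020e} and \cite[Lemma 3.1]{m2020e}) $y$ centralizes $u$ and the coincidence of the canonical involutions forces, via coset enumeration on the relevant presentations $G(i_1,i_2)$ built from $\rho_{42}$ together with the relations recording $(z^yz)^{i_1}=((zy)^2x)^{i_2}=1$, that $\langle x,z,y\rangle=\langle x,z\rangle$, a contradiction. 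Similarly every involution centralizing $x$ equals $u$ (the dihedral subgroup $\langle u,v\rangle\subseteq C_G(x)$ has $(uv)^6=1$; if $|uv|=3$ then $uv\in\langle x\rangle$ gives $v=ux^{\pm1}$ an involution, impossible, and otherwise \cite[Lemma 3.1]{m2020e} gives $v=u$). Hence $C_G(x)=\langle x,u\rangle$ has order $9$.

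Next I would derive a contradiction from $C_G(u)$. Since $u$ is an involution, $C_G(u)$ is infinite by Lemma~\ref{l:shunk}, and it contains $\langle x,u\rangle$. I would show that $C_G(u)$ must contain an $L_3(2)$-subgroup whose relevant involution is conjugate to $u$, and then invoke Statement~\ref{cl32} to conclude that the centralizer of that involution, in particular $C_G(u)$, is a $2$-group — contradicting $x\in C_G(u)$. Concretely: $b=z^xz\in C_G(u)$ has order $7$, so $C_G(u)$ is non-solvable by Schmidt, hence contains a finite non-abelian simple subgroup; feeding this through Statement~\ref{fsg}, Lemma~\ref{l:noD10}, Lemmas~\ref{l:a6}--\ref{l:a7}, and Statement~\ref{p:s5} (which ban $A_5$ with $\Delta$-involutions, $A_6$, $S_5$, $S_6$, $A_7$ inside $G$, and hence force the simple section to be $L_3(2)$ or $L_3(4)$), one gets an $L_3(2)$- or $L_3(4)$-subgroup $K\le C_G(u)$. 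In either case $K$ (or a $L_3(2)$-subgroup of it) contains an involution $a$ conjugate in $G$ to $u$, so by Statement~\ref{cl32} $C_G(a)$ is a $2$-group; since $u\in C_G(a)$ as well and all relevant involutions are conjugate, $C_G(u)$ is a $2$-group, contradicting $b\in C_G(u)$ of order $7$.

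The main obstacle I anticipate is the bookkeeping in the first step — verifying that $C_G(x)=\langle x,u\rangle$ precisely rather than something larger — because it requires the same delicate coset-enumeration analysis of all the two-generator and three-generator presentations that appears in Lemma~\ref{l:noF42}, distinguishing by the possible orders of $[z,y]$ and by whether the extraspecial group $3^{1+2}$ occurs; care is needed because $z\notin\Delta$ here, so Lemma~\ref{2+3-A4} is unavailable and one must instead use \cite[Lemma 2.2]{m2020e} and the $L_3(2)$-structure results of this paragraph. The second step is comparatively soft, being essentially a repackaging of the ``existence of a non-abelian simple subgroup'' machinery applied inside $C_G(u)$; the only subtlety there is ensuring the simple subgroup one extracts actually does contain an involution $G$-conjugate to $u$, which follows because in $L_3(2)$ and $L_3(4)$ all involutions are conjugate and $u$ itself lies in $C_G(u)\ge K$.
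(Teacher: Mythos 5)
Your proposal has several genuine gaps, and the overall architecture would not survive them.

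First, the opening step is not available to you. Lemma~\ref{l:noF42} (and likewise Lemma~\ref{2+3-A4}) is proved in paragraph~5 \emph{under the standing contrary assumption} that $G$ has no finite nonabelian simple subgroup $H$ with $\Gamma_2(H)\subseteq\Delta$; its proof explicitly invokes that hypothesis (e.g.\ in step (3.3)). Once Statement~\ref{fsg} is established, that assumption is discharged as false, so none of the paragraph-5 lemmas can be cited in paragraph~7. Hence your conclusion that the involutions of $H\simeq F_{42}$ lie outside $\Delta$ is unjustified, and the subsequent plan to ``argue exactly as in item (1) of Lemma~\ref{l:noF42}'' inherits the same problem: those coset enumerations were built on relation sets (such as $(z^yz)^{i_1}=1$ with $i_1\in\{3,5\}$) that came from Lemma~\ref{2+3-A4}. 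You acknowledge that Lemma~\ref{2+3-A4} is unavailable, but you do not supply a replacement, so the determination of $C_G(x)$ is not actually carried out. The paper gets what it needs much more cheaply: since $C_{F_{42}}(t)$ contains an element of order $3$, Statement~\ref{cl32} immediately forces $t\in\Gamma_2^-$, with no appeal to $\Delta$ at all.

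Second, your concluding step rests on a false computation: you assert that $b=z^xz$ of order $7$ lies in $C_G(u)$. But $F_{42}\simeq 7:6$ is a Frobenius group, so no nontrivial element of the kernel commutes with any involution; $b$ and $u$ do not commute. (Had $C_G(u)$ really contained an element of order $7$ together with an $L_3(2)$, you would already have an element of order $14$ and be done --- the elaborate detour through Statement~\ref{fsg} would be unnecessary.) Moreover, Statement~\ref{fsg} and Lemmas~\ref{l:a6}--\ref{l:a7} produce simple subgroups of $G$, not of the subgroup $C_G(u)$; there is no reason an $L_3(2)$ inside $C_G(u)$ would contain an involution $G$-conjugate to $u$; and $u\in C_G(a)$ with $C_G(a)$ a $2$-group does not make $C_G(u)$ a $2$-group. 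The paper's actual argument is entirely different and avoids all of this: using Statement~\ref{cl32} and the fact that $x$ lies in no $A_4$, it shows that \emph{every} $a\in\Delta^+$ inverts $x$; then, taking $W\simeq L_3(2)$ (which exists by Lemma~\ref{l32+}) and $a,b\in\Gamma_2(W)$ with $ab\in\Gamma_4$, one gets $x^{ab}=x$ and hence an element of order $12$, a contradiction. You would need to adopt something like that route, or repair both steps above, for your proof to go through.
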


\begin{proof} Assume $F_{42}=\langle t,x \rangle \leq G$, where $t \in \Gamma_2$ and $x \in \Gamma_3$.
By Statement \ref{cl32} $t \in \Gamma_2^-$. By \cite[Statement 1]{m2020e} there is a unique involution $u$ in $C_G(x)$. Moreover $u$ is conjugated with $t$, and so $u \in \Gamma_2^-$.

By \cite[Lemma 4.3]{m2020e} $x$ cannot sit in a subgroup isomorphic to $A_4$.

Let $a \in \Delta^+$, and consider $\langle a,x \rangle$. By \cite[Lemma 2.1]{m2020e} and Statement \ref{cl32} we have $x^a=x^{-1}$. Indeed, if $|ax|=6$ and order of $[a,x]$ is even there is a subgroup of $\langle a,x \rangle$ isomorphic to $A_4$, which contains $x$; if order of $[a,x]$ is odd, then there is an element of order $3$ in $C_{\langle a,x \rangle}(a)$, which is not possible.

So for every $a \in \Delta^{+}$ we have $x^a=x^{-1}$. Take $W \leq G$ such that $W \simeq L_3(2)$ and choose $a,b \in \Gamma_2(W)$ with $ab \in \Gamma_4$. Then $x^{ab}=x$ and we conclude that $12 \in \omega(G)$, a contradiction.
\end{proof}

\begin{lemma}\label{l34} $G$ has a subgroup isomorphic to $L_3(4)$. 
\end{lemma}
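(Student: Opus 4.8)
\emph{Plan.} By Statement~\ref{fsg}, $G$ has a subgroup $H$ isomorphic to $A_5$ or to $L_2(7)\simeq L_3(2)$, in either case with $\Gamma_2(H)\subseteq\Delta$. Suppose first $H\simeq A_5$. Then some $a\in\Gamma_2(H)\subseteq\Delta$ inverts an element of order~$5$ and $C_G(a)$ contains an involution other than $a$, so, as at the start of paragraph~6 (see \cite[Statement~2]{m2020e} and the proof of Lemma~\ref{l:noD10}), $G$ has a subgroup isomorphic to $A_6$, $L_3(4)$ or $S_5$. Statement~\ref{p:s5} rules out $S_5$; if an $A_6$ appears then Lemma~\ref{l:a6} gives $S_6$, $A_7$ or $L_3(4)$, the first two being excluded by Lemmas~\ref{l:s6} and~\ref{l:a7}. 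Hence $L_3(4)\le G$ in this case --- this is the path $A_5\to A_6\to L_3(4)$ of Figure~1.

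It remains to treat $H\simeq L_3(2)$. By Lemma~\ref{l32+} either $L_3(4)\le G$, and we are done, or $G$ has a subgroup $X\simeq 2^3{:}L_3(2)$; I claim the latter again forces $L_3(4)\le G$. Write $X=V{:}L$ with $V=O_2(X)\simeq 2^3$ the natural module for $L\simeq L_3(2)$. First, $V\setminus\{1\}\subseteq\Delta$: given $1\neq v\in V$, choose a transvection $b\in L$ with $\langle v\rangle=[V,b]$ and $w\in V\setminus C_V(b)$ with $(1+b)w=v$; then $(w,b)\in X$ has order~$4$ and square~$v$. Next, pick a two-dimensional subspace $V'<V$ and a subgroup $S\simeq S_3$ of $L$ that stabilises $V'$ and induces $GL_2(2)$ on it, so that $H_0=V'{:}S\le X$ is isomorphic to $S_4$ with $O_2(H_0)=V'\subseteq\Delta$, and apply Lemma~\ref{l:s4} to $H_0$. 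Its first alternative cannot hold, since $V$ is elementary abelian and $V'<V$, so $C_G(V')\supseteq V\supsetneq V'$. In each of the remaining alternatives Lemma~\ref{l:s4} furnishes an extra element $d\in G$ (an involution or an element of order~$3$) normalising a prescribed $2$-subgroup of $X$ and subject to an explicit set of relations against a generating set of $X$; following the computational pattern of Lemmas~\ref{l:a6}--\ref{l32+}, a coset enumeration then presents $\langle X,d\rangle$ as a homomorphic image of finitely many explicit groups. Going through these, one finds that in every case either an element of order greater than~$7$ or a subgroup isomorphic to $S_5$, $S_6$, $A_7$ or $F_{42}$ is forced --- impossible by $\omega(G)=\omega(A_7)$, Statement~\ref{p:s5} and Lemmas~\ref{l:s6},~\ref{l:a7},~\ref{noF42} --- or else a subgroup isomorphic to $A_5$, $A_6$ or $L_3(4)$ is produced, and then $L_3(4)\le G$ by the first part of the proof (via Lemma~\ref{l:a6} when an $A_6$ is produced). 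Hence $L_3(4)\le G$ in all cases.

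The hard part is the $L_3(2)$ branch: one must choose the auxiliary $S_4$ inside $2^3{:}L_3(2)$ so that Lemma~\ref{l:s4} applies with its degenerate first alternative automatically excluded --- which the choice of an affine $S_4$ with $O_2$ contained in $V$ achieves --- then organise the several cases arising from Lemma~\ref{l:s4} and from the possible orders of the mixed products, and finally certify that each of the resulting coset enumerations terminates and delivers precisely the dichotomy above. Keeping the enumerations within feasible range, and verifying that no configuration survives except one yielding $L_3(4)$, is where the real effort lies.
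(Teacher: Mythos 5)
Your first branch ($H\simeq A_5$ with $\Gamma_2(H)\subseteq\Delta$, hence an $a\in\Delta$ inverting a $5$-element with a second involution in $C_G(a)$, hence $A_6$, $L_3(4)$ or $S_5$, the unwanted alternatives being killed by Statement~\ref{p:s5} and Lemmas~\ref{l:a6}, \ref{l:s6}, \ref{l:a7}) is exactly the paper's argument; it is already the first paragraph of the proof of Lemma~\ref{l32+}. The gap is in the second branch. Once Lemma~\ref{l32+} hands you $X=V{:}L_3(2)$ with $V\simeq 2^3$, your plan is to apply Lemma~\ref{l:s4} to an affine $S_4=V'{:}S_3$ inside $X$ and then to dispose of alternatives (2) and (3) of that lemma by coset enumerations ``following the computational pattern'' of earlier lemmas. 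None of these enumerations is set up, let alone carried out: you write down no presentations, you do not bound the orders of the mixed products, and the asserted dichotomy (``either an element of order $>7$ or one of $S_5,S_6,A_7,F_{42}$ is forced, or else $A_5$, $A_6$ or $L_3(4)$ is produced'') is a conjecture about the outcome of computations you have not performed --- you say yourself that this is ``where the real effort lies''. Alternative (2) of Lemma~\ref{l:s4} alone (a nontrivial cyclic subgroup of order $2$, $3$, $5$ or $7$ normalized by $H_0$) opens a case division you never examine, and in a paper whose entire method is to certify such claims by explicit enumeration, asserting their result is not a proof.

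The paper's own argument for this lemma needs no new computation and does not go through Lemma~\ref{l:s4} at all. Take $1\neq v\in V$. The stabilizer of $v$ in $L_3(2)$ contains a $3$-element, so $C_G(v)$ is not a $2$-group and Statement~\ref{cl32} forces $v\in\Delta^-$, i.e.\ $v$ lies in no $L_3(2)$-subgroup of $G$. On the other hand $v\in\Delta$, and the accumulated restrictions --- no $A_4$ through $v$ (Proposition~\ref{s:arxiv} together with $O_2(G)=1$), no $F_{42}$ (Lemma~\ref{noF42}), no inverted $5$-element (Lemma~\ref{l:noD10}, since $C_G(v)\supseteq V$ contains more than one involution), and no inverted $3$-element (otherwise Lemma~\ref{l:3and3} puts it into $O_3(G)$, a configuration already eliminated in \S 5) --- drive $v$ into $\Delta^+$, contradicting $v\in\Delta^-$. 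To make your route work you would have to actually perform and report the enumerations for each alternative of Lemma~\ref{l:s4}; as written, the decisive step of your proof is missing.
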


\begin{proof} Assume the contrary. Then by Lemma \ref{l32+} there is a subgroup $H \simeq V:L_3(2)$, where $V\simeq 2^3$. If $v \in V$, then $C_G(v)$ has an element of order $3$. Therefore, by Statement \ref{cl32}, $v\in \Delta^-$.
	
By Proposition \ref{s:arxiv} there is no subgroup isomorphic to $A_4$ that contains $v$. By Lemma \ref{2+3-A4}, for every $x \in \Gamma_3$ we have $(xt)^6=[x,t]^p=1$, where $p$ is odd and, by Lemma \ref{noF42}, $p \not = 7$. Since $C_G(v) >V \simeq 2^3$, then by Lemma \ref{l:D10}, $p \not = 5$. Repeating arguments of Lemma \ref{l:3and3}: if $v$ inverts $x$ of order $3$, then $x \in O_3(G)$ and this case was considered in Paragraph 5. Therefore $v \in \Delta^+$, a contradiction.
\end{proof}
	
\end{section}

\begin{section}{$L_3(4)$-subgroup and Theorem proof}
	
Throughout this section we assume $L_3(4) \simeq H \leq G$. Let $i \in \Gamma_2(H)$, and $C=C_G(i)$.
Then $C_H =C_H(i)$ is a subgroup of $C$ of order $2^6$ with the center $V=Z(C_H) \simeq 2^2$. Let $N=N_H(C_H(i))$, and choose $r \in \Gamma_3 (N)$ so that $N =\langle C_H,r \rangle$. Note that $6 \not \in \omega(H)$, therefore $r$ acts on $C_H$ fixed point freely. In particular, $\langle V,r \rangle \simeq A_4$. Let $j=i^r$ so that $V=\langle i,j \rangle$.
	
\begin{lemma}\label{l:VinO2} $V \subseteq O_2(C)$. 
\end{lemma}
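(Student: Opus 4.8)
The goal is to show that the Klein four-group $V=\langle i,j\rangle$ (where $i$ is our fixed involution and $j=i^r$) lies in $O_2(C)$ for $C=C_G(i)$. Since $j=i^r$ with $r\in\Gamma_3(N)$ and $\langle V,r\rangle\simeq A_4$, the element $j$ is an involution in $C$ commuting with $i$, and $r$ normalizes $V$ while acting without nontrivial fixed points on it. The natural strategy is: first establish that $C$ is locally finite (or at least that the relevant finitely generated subgroups of $C$ are finite), then argue inside $C$ using standard finite-group local analysis that a normal $2$-subgroup must swallow $V$.

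\textbf{First step: local finiteness of $C$.}
By Lemma~\ref{l:shunk}, $C$ is infinite, so we cannot simply invoke finiteness directly; instead I would reduce to finite subgroups. Given any finitely generated subgroup of $C$ containing $V$, adjoin $r$ (which normalizes but does not centralize $V$, hence need not lie in $C$; so instead work inside $N_G(V)$, which contains $C$ with index dividing $6$, or work with the subgroup $\langle i\rangle^{\,?}$). More precisely: any finite subgroup $F$ of $C$ together with $r$ generates a $(2,3)$-or-finitely-generated subgroup; since $\langle V,r\rangle\simeq A_4$ is finite and $\omega(G)\subseteq\{1,\dots,7\}$, the earlier machinery (e.g. \cite[Lemma 2.1]{m2020e} and Schmidt/Shunkov-type arguments already used in Sections 4--6) gives that $\langle F,r\rangle$ is finite. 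Thus every finitely generated subgroup of $C$ lies in a finite subgroup, i.e.\ $C$ is locally finite.

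\textbf{Second step: the finite local analysis.}
Now take any finite subgroup $K$ of $C$ with $V\le K$ and, via the previous step, enlarge it to a finite $K^{*}=\langle K,r\rangle$ inside $N_G(V)$; set $K_0=K^{*}\cap C$, a finite group normalized by $r$ of index $3$ in $K^{*}$. Inside the finite group $K^{*}$ we have $i\in V\trianglelefteq K^{*}$, $r$ acting fixed-point-freely on $V$, and $\omega(K^{*})\subseteq\omega(A_7)$, so $K^{*}$ has no elements of order $9,12,15,21$, etc. I would then show that in $K^{*}$ (hence in $K_0$) the subgroup $V$ lies in $O_2(K_0)$: because $C_H=C_H(i)$ is a $2$-group of order $2^6$ normalized by $r$ with $Z(C_H)=V$, and $C_H\le C$, the group $C$ contains a large $2$-local; the key is that $i$ is a $2$-central involution of $H\simeq L_3(4)$ and the structure of $C_H(i)$ (extraspecial-type, with center $V$) forces $V$ to be in the "$2$-core" of any finite overgroup inside $C$. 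Concretely, in $K_0$ one checks $C_{K_0}(O_2(K_0))\le O_2(K_0)$ fails unless $V\le O_2(K_0)$, using that elements of order $3$ in $C$ cannot centralize $i$-and-$j$ simultaneously in a way producing order-$12$ or order-$9$ elements. Passing to the direct limit over all such finite $K$, we conclude $V\le O_2(C)$.

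\textbf{Main obstacle.}
The delicate point is not the finite-group local analysis (which is routine once we are finite) but guaranteeing that the $2$-core behaves coherently as we take the direct limit: a priori $O_2(K)$ can shrink as $K$ grows, so $V$ could fail to be in $O_2(C)=\bigcap$-type limit. The way around this is to pin down $V$ directly: show that $V$ is contained in \emph{every} finite normal-in-its-overgroup $2$-subgroup of $C$ that contains a conjugate of $C_H\cap C$, by exploiting that $r$ acts fixed-point-freely on $V$ and that $\langle V,r\rangle\simeq A_4$ — so any element of $C$ normalizing enough of the structure must centralize $V$ or permute it, and the non-$2$-elements are excluded by the spectrum. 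I expect the proof in the paper to do exactly this: combine local finiteness of $C$ (Schmidt/Shunkov), the known embedding $C_H(i)\le C$ with $Z(C_H(i))=V$, and a Baer--Suzuki-type or fixed-point-free-action argument to force $V\subseteq O_2(C)$.
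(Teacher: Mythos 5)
Your proposal has a genuine gap, and the gap sits exactly where the real work is. Your first step asserts that $C=C_G(i)$ is locally finite, on the grounds that for any finite $F\le C$ the group $\langle F,r\rangle$ is finite "by the earlier machinery". Nothing in the paper (or in general) supports this: a finitely generated periodic group with spectrum inside $\{1,\dots,7\}$ is not known to be finite — that is essentially the theorem being proved. The Schmidt/Shunkov arguments used elsewhere in the paper always have an extra handle (a locally finite normal subgroup with locally finite quotient, or a finite centralizer of an involution); neither is available for $C$ here. If one could show centralizers of involutions are locally finite at this stage, most of Sections 4–7 would be unnecessary. Your second and third steps then rest on this unproved local finiteness, and you yourself flag (correctly) that even granting it, $O_2$ of finite subgroups need not pass to a direct limit — and the "way around" you sketch is an assertion, not an argument.

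The paper avoids all of this by a single application of the analogue of the Baer–Suzuki theorem for periodic groups of $2$-exponent $4$ (\cite{bs2014e}), which requires no local finiteness: if $V\not\subseteq O_2(C)$, there is an involution $t\in C$ with $i^r t\in\Gamma_3$ (note $C$ centralizes $i$, so it has no elements of order $5$ or $7$ and the product of two involutions in $C$ has order in $\{1,2,3,4,6\}$; the non-$2$-group case reduces to order $3$). One then writes $\langle i,r,t\rangle$ as a quotient of finitely many explicitly presented groups and coset enumeration shows each is either $A_7$ (excluded by Lemma~\ref{l:a7}) or an image of $A_4$, contradicting $i^rt\in\Gamma_3$. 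You do name "a Baer--Suzuki-type argument" as a possibility in your closing sentence, but your actual proof does not use it; as written, the argument does not go through.
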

	
\begin{proof}
Assume that $V$ is not contained in $O_{2}(C)$. Then by Baer-Suzuki theorem \cite{bs2014e} there is $t\in \Gamma_2(C_G(i))$ such that $i^rt \in \Gamma_3$. Note that $\langle i,r,t \rangle$ is a homomorphic image of a group $$G(i_1,i_2,i_3)=\big \langle i,r,t \mid i^2,r^3,t^2,(ir)^3,(ti)^2,(ti^r)^3,(trr)^{i_1},(t^rrt)^{i_2},(irt)^{i_3} \big \rangle.$$ Computations show that $G(7,4,7)\simeq A_7$, which is not possible, and $\langle i,r,t \rangle$ is a homomorphic image of $A_4$ for other parameters. \end{proof}
	
\begin{lemma}\label{l:L34O3} $O_3(C)=1$.
\end{lemma}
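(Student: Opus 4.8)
The plan is to derive a contradiction from the assumption $O_3(C)\neq 1$ by exploiting the structure of $C_H=C_H(i)$, which is a $2$-group of order $2^6$ on which $r$ acts fixed-point-freely. First I would observe that $C_H\leq C$ normalizes $O_3(C)$, and since $C_H$ is a $2$-group acting on the nontrivial $3$-group $O_3(C)$, a minimal counterexample supplies a nontrivial $C_H$-invariant elementary abelian section, or—better—I would pass to a suitable element. Concretely, take $1\neq f\in O_3(C)$; then $f$ commutes with $i$, and I would examine the subgroup $\langle i,j,r,f\rangle$. Since $f\in C$ and $C_H$ acts on $O_3(C)$, the subgroup $P=\langle C_H, f\rangle$ is a finite (soluble, by coprime action) $\{2,3\}$-group inside $C$; being finite, it lies in no infinite obstruction, so all ordinary finite-group theory applies to it.

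The key step is to pin down how $f$ (an element of order $3$ centralizing $i$) interacts with $r$ (an element of order $3$ with $i^r=j$) and with involutions of $C_H$, using the already-established arithmetic constraints: $6\notin\omega(H)$ is not available for $G$, but $15,21\notin\omega(G)$ and $12\notin\omega(G)$ are, and—crucially—by Statement \ref{cl32}, for any $a\in\Gamma_2$ lying in an $L_3(2)$-subgroup, $C_G(a)$ is a $2$-group, so such $a$ cannot commute with $f$. Since $H\simeq L_3(4)$ contains subgroups isomorphic to $L_3(2)$, the involution $i$ itself is of type $\Gamma_2^+$ (it lies in an $L_3(2)$), which immediately forces $C_G(i)=C$ to be a $2$-group, contradicting $O_3(C)\neq 1$. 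So the real content is verifying that $i$ indeed sits inside an $L_3(2)$-subgroup of $H\simeq L_3(4)$: this is a standard fact about $L_3(4)$ (its subgroups $L_3(2)$ are exactly the images of the subfield/natural module geometry), and all involutions of $L_3(4)$ are conjugate, so every involution of $H$ is in some $L_3(2)<H\leq G$.

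Thus the argument collapses to: (i) recall that all involutions of $L_3(4)$ are conjugate and that $L_3(4)$ contains $L_3(2)$ subgroups through every involution; (ii) conclude $i\in\Gamma_2^+$; (iii) invoke Statement \ref{cl32} to get that $C=C_G(i)$ is a $2$-group, whence $O_3(C)=1$. The main obstacle—if one wants to avoid quoting the subgroup structure of $L_3(4)$—is step (i)/(ii): one can instead run a coset-enumeration argument, picking an explicit involution $a$ and element $x$ of order $3$ inside a visible $L_3(2)\simeq\langle a,x\mid a^2,x^3,(ax)^7,[a,x]^4\rangle$ of $H$ and checking that $a$ is $H$-conjugate to $i$. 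Either way, once $i\in\Gamma_2^+$ is secured, Statement \ref{cl32} finishes the proof with no further computation.
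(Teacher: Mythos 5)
Your final three-step argument is correct, but it is a genuinely different (and much shorter) route than the paper's. The paper works inside $C$ directly: taking $1\neq x\in O_3(C)$, it uses Lemma \ref{l:VinO2} and the $3^{1+2}{:}2$ presentation to get $[x,V]=1$, then shows (via the $A_4$-subgroups $\langle k,r\rangle$ and a quoted result on $\{2,3\}$-groups) that $x$ centralizes \emph{every} involution of $C_H(i)$; since $C_H(i)$ contains two non-commuting involutions, $x$ centralizes an element of order $4$ and $12\in\omega(G)$, a contradiction. You instead observe that $L_3(4)$ has a subfield subgroup $\simeq L_3(2)$ and a single class of involutions, so $i$ lies in some $L_3(2)$-subgroup of $H\leq G$, and Proposition \ref{cl32} then forces $C=C_G(i)$ to be a $2$-group — a conclusion strictly stronger than $O_3(C)=1$. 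Both external facts you need ($L_3(2)<L_3(4)$ via the subfield embedding, and conjugacy of all involutions of $L_3(4)$) are standard and true, and Proposition \ref{cl32} is established unconditionally by that point, so your argument is sound. What each approach buys: the paper's proof stays self-contained in the local setup of \S 8 and does not import the subgroup structure of $L_3(4)$; yours is essentially computation-free and would in fact also trivialize Lemma \ref{l:VinO2} and the exponent-$12$/$2$-length discussion at the start of Lemma \ref{THproof} (note the authors there still treat $C$ as possibly containing elements of order $3$, which suggests they did not have this shortcut in view — your stronger conclusion is consistent with, not contradictory to, what follows). Two small editorial points: your opening paragraph about $P=\langle C_H,f\rangle$ is exploratory and unused, and should be deleted; and the phrase ``$6\notin\omega(H)$ is not available for $G$'' plays no role in the final argument.
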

	
\begin{proof} Assume $1 \not = x \in O_3(C_G(i))$. Then $\langle x,x^j \rangle$ is a 3-subgroup. Therefore $\langle x,j \rangle$ is a homomorphic image of $3^{1+2}:2$. By Lemma \ref{l:VinO2} this implies $[x,j]=1$. So $[x,V]=1$ and $r$ normalizes $V$, therefore $[x^r,V]=1$ and in particular $x^r \in C$. 
		
Let $k$ be an arbitrary involution in $C_H(i)$, and recall that $\langle k,r \rangle \simeq A_4$. Then $\langle k,k^r,x,x^r,x^{r^2} \rangle$ is a $r$-invariant subgroup of $C_G(i)$. It follows that $R=\langle k,r,x \rangle$ is a $\{2,3\}$-group. By \cite[Statement 1]{mamontov2013e} $k,k^r \in O_2(R)$. Hence $k$ inverts no elements of order $3$ from $S = \langle k,x \rangle$. On the other hand $x \in O_3(R)$ and therefore $S$ is a homomorphic image of $3^{1+2}:2$; this is possible only when $[k,x]=1$.
		
Therefore $x$ centralizes all involutions from $C_H(i)$. There are two non commuting involutions in $C_H(i)$, and so $12 \in \omega(G)$, a contradiction.
\end{proof}
	
\begin{lemma}\label{THproof} There is a contradiction, proving Theorem.
\end{lemma}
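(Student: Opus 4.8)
The plan is to derive the final contradiction from a structural analysis of $C=C_G(i)$, which is infinite by Lemma~\ref{l:shunk}. Put $W=O_2(C)$. By Lemma~\ref{l:VinO2} we have $1\ne V\le W$, and $O_3(C)=1$ by Lemma~\ref{l:L34O3}. Since $i\in Z(C)$ we get $i\in Z(W)$, and since $8\notin\omega(G)$ the group $W$ has exponent at most $4$ and is therefore locally finite by Sanov's theorem \cite{sane}. Observe also that $\omega(C)\subseteq\{1,2,3,4,6\}$ (an element of order $5$ or $7$ commuting with $i$ would produce an element of order $10$ or $14$), and that $i\in\Delta$: in the Sylow $2$-subgroup $C_H$ of $H\simeq L_3(4)$ the centre $V$ consists precisely of the squares of the elements of order $4$, so $i=x^2$ for a suitable $x\in C_H$ of order $4$. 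Consequently all the $\Delta$-lemmas of paragraph~6, in particular Lemmas~\ref{l:noD10}, \ref{l:D10} and \ref{l:3and3}, apply with $a=i$.

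The first step, which I expect to be the main obstacle, is to prove that $C$ is locally finite. For this one transports the ``reduction of the $2$-radical'' analysis of paragraph~4 into $C$: working with $\overline{C}=C/W$, one shows that $\overline{C}$ has no subgroup isomorphic to $A_4$ whose involutions lift to $\Delta$ (Statements~\ref{s:arxiv} and \ref{s:o2}), and then, going through the possibilities for $\mu(\overline{C})$ exactly as in the cases of the proof of Statement~\ref{s:o2} — now using that conjugates of $r$ act fixed-point-freely on $C_H$, that $O_3(C)=1$, and that $6$ and $12$ lie outside $\omega(G)$ except where $6$ is forced — one excludes every non-locally-finite possibility for $\overline{C}$. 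Hence $\overline{C}$, and therefore $C$, is locally finite.

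Once $C$ is locally finite it is, being infinite, strictly larger than the finite subgroup $C_H$; so we may pick $g\in C\setminus C_H$ and set $F=\langle C_H,g\rangle$, a finite group with $C_H\le F\le C$ and $i\in Z(F)$. Starting from a presentation of $C_H$ and adjoining $g$ subject to $\omega(F)\subseteq\omega(A_7)$, $[g,i]=1$, and the restrictions obtained in paragraphs~2--7 on the way a $3$- or $4$-element may sit over $C_H$ (using in particular the Frobenius structure of $N_H(C_H)$ and the fact that $i\in\Delta^{+}$ would contradict Statement~\ref{cl32}), one exhibits $F$ as a homomorphic image of finitely many explicit finitely presented groups. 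Coset enumeration then shows that each of these is finite and that every resulting $F$ either contains an element of order greater than $7$, or possesses an element of order $3$ centralising two non-commuting involutions — whence $12\in\omega(G)$, as in the proof of Lemma~\ref{l:L34O3} — or contains a subgroup isomorphic to one of $A_7$, $A_8$, $S_6$, $S_5$ or $\mathrm{PSU}(3,5)$, each of which is excluded by Lemmas~\ref{l:s6}, \ref{l:a7}, Statement~\ref{p:s5} and the other results of paragraphs~6--7. In all cases we reach a contradiction, so no counterexample $G$ exists and the Theorem is proved.
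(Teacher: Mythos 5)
Your proposal has a genuine gap in its second half, which is exactly where the contradiction has to come from. You take an arbitrary $g\in C\setminus C_H$ and assert that $F=\langle C_H,g\rangle$ can be ``exhibited as a homomorphic image of finitely many explicit finitely presented groups'' whose coset enumerations all terminate in a contradiction — but you never say which relations $g$ satisfies beyond $[g,i]=1$ and $\omega(F)\subseteq\omega(A_7)$. That is far too little structure: the interaction of $g$ with the $63$ nontrivial elements of $C_H$ is completely unconstrained, so there is no finite explicit list of presentations to enumerate, and no reason to expect the enumerations to close. The paper's key idea, which is missing from your argument, is to manufacture a very special element outside $H$: from $O_3(C)=1$ and the $2$-length bound one gets $C_C(O_2(C))\subseteq O_2(C)$, hence $O_2(C)$ is infinite; then $P=\langle O_2(C),C_H(i)\rangle$ is an infinite $2$-group, Shunkov's theorem makes $C_P(C_H(i))$ infinite, and Hall--Kulatilaka/Kargapolov produce an involution $u\notin H$ with $[u,C_H(i)]=1$. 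It is precisely the relations ``$u$ is an involution centralising the entire Sylow $2$-subgroup $C_H(i)$ of $L_3(4)$'' that pin down $\langle H,u\rangle$ as a quotient of a manageable family of finitely presented groups $J(\dots)$, for which enumeration over $\langle x,y,t\rangle\simeq L_3(4)$ gives index at most $2$ and forces $u\in H$. Without this step your computation is not specified, let alone carried out.

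The first half of your proposal also spends its effort in the wrong place. Proving that $C$ is locally finite is not an obstacle at all: since an element of order $5$ or $7$ commuting with $i$ would yield an element of order $10$ or $14$, one has $\omega(C)\subseteq\{1,2,3,4,6\}$, and groups with all element orders at most $6$ are already known to be locally finite (the $OC_6$ results cited in the introduction). Your proposed ``transport of the paragraph-4 analysis into $C$'' is therefore unnecessary, and in any case it is only sketched, not performed. More importantly, local finiteness of $C$ by itself does not yield the contradiction; what is actually needed is the infinitude of $O_2(C)$ together with the existence of the centralising involution $u$ described above.
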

	
\begin{proof} The subgroup $C$ has exponent $12$, so its $2$-length is $\leq 2$ by \cite{bruh}. By Lemma \ref{l:L34O3} $O_3(C)=1$ and hence $C_C(O_2(C)) \subseteq O_2(C)$ by \cite[Lemma 3]{lmmj2014e} and by Lemma \ref{l:shunk}, $C$ is infinite. Therefore $O_2(C)$ is infinite.
		
It follows that $P=\langle O_2(C), C_H(i) \rangle$ is an infinite $2$-subgroup, and $C_H(i)$ is a finite subgroup of $P$. By Shunkov Theorem \cite[Lemma 2]{l34e} $C_P(C_H(i))$ is infinite, and by \cite{hallkulatilatika,kargapolove} it contains an infinite abelian subgroup. So there is an involution $u \not \in H$ such that $[u,C_H(i)]=1$.

We introduce the following sets of relations:
\begin{itemize}
\item  $\alpha= \{ x^3,y^2,(xy)^7,[x,y]^4 \}$ defining $L_3(2)$;
\item $\beta =\{t^2,t^yt,t^{[x,y]}t \}$, which state that an involution $t$ commutes with a Sylow $2$-subgroup of $L_3(2)$;
\item $\gamma_t(i_1,i_2,i_3,j_1) = \{(xt)^{i_1},(xyt)^{i_2},(xytxt)^{i_3},(ty^{x^2y})^{j_1} \}$. \\These relations were used in \cite{l34e}, where it is shown that they define a finite group $\langle x,y,t \rangle$, when $i_1,i_2,i_3,j_1 \in \{3,4,5,7\}$ and $j_1 \not = 7$. 	
\end{itemize}
		
It is shown in \cite[Lemma 8]{l34e} (and can be verified by computations) that $$R = \big \langle x,y,t \; \big | \; \alpha \cup \beta \cup \gamma_t(7,5,7,5) \,\big\rangle$$ is such that $H=R/Z(R) \simeq L_3(4)$, and so $H$ can be identified with a subgroup $H$ in $G$. Denote $t_1=t^{t^x}$ and $t_2=t^{t^{x^2}}$. Then we can verify that $Z(R)= \langle z \rangle^R$, where $z=(t_1yy^x)^2t$, and 
$C_H(t) = \langle y,y^x,t,t_1,t_2 \rangle$.
		
Let $v=ut$, then $\langle H,u \rangle$ is a homomorphic image of  $$J(i_1,i_2,i_3,j_1,i_4,i_5,i_6,j_2)= \big
\langle x,y,t,u \mid \alpha \cup \beta \cup \gamma_t(7,5,7,5)  \cup \delta \cup \{z\} \big \rangle,$$ where $\delta=\delta(i_1,i_2,i_3,j_1,i_4,i_5,i_6,j_2)$ is the set $$ \{u^2,[u,y],[u,y^x],[u,t],[u,t_1],[u,y_2] \} \cup \gamma_u(i_1,i_2,i_3,j_1) \cup \gamma_v(i_4,i_5,i_6,j_2) $$
and the parameters are in $\{4,5,6,7\}$. Moreover $j_1\not =7$ and $j_2 \not = 7$, since $[x,y]^2 \cdot y^{x^2y} \in \Gamma_3$ and $u$ centralizes $[x,y]^2$ by \cite[Lemma 2.2]{m2020e}. Computations show that $$|J(i_1,i_2,i_3,j_1,i_4,i_5,i_6,j_2) : \langle x,y,t \rangle | \leq 2 $$ for any choice of parameters. Therefore $u \in H$: a contradiction.
\end{proof}
	
\end{section}

\bibliographystyle{unsrt}

\vspace{8mm}

{\sc Enrico Jabara}

{\sc DFBBC Universit\`{a} di Venezia}

{ Dorsoduro 3484/D - 30123  Venezia 30123 - ITALY}

{\tt email: jabara@unive.it}

\vspace{3mm}

{\sc Andrey Mamontov}

{\sc Novosibirsk State University}

{\sc Pirogova Str. 2 - Novosibirsk 630090 - RUSSIA }

{\tt email: andreysmamontov@gmail.com}
\end{document}